\documentclass[11pt, reqno]{amsart}
\usepackage{amsmath, amsthm, amscd, amsfonts, amssymb, graphicx, xcolor, dsfont}
\usepackage{float}
\usepackage{tikz}
\usepackage{cases}
\usepackage{marginnote}
\usetikzlibrary{backgrounds}
\usetikzlibrary{patterns,fadings}
\usetikzlibrary{arrows,decorations.pathmorphing}
\usetikzlibrary{calc}
\definecolor{light-gray}{gray}{0.95}
\usepackage{float}
\usepackage[colorlinks=true,linkcolor=blue,citecolor=magenta]{hyperref}
\usepackage{comment}
\usepackage{charter}
\let\oldtocsection=\tocsection
\let\oldtocsubsection=\tocsubsection
\let\oldtocsubsubsection=\tocsubsubsection
\renewcommand{\tocsection}[2]{\hspace{0em}\oldtocsection{#1}{#2}}
\renewcommand{\tocsubsection}[2]{\hspace{1em}\oldtocsubsection{#1}{#2}}
\renewcommand{\tocsubsubsection}[2]{\hspace{2em}\oldtocsubsubsection{#1}{#2}}
\DeclareRobustCommand{\SkipTocEntry}[5]{}

\def\centerarc[#1](#2)(#3:#4:#5){\draw[#1] ($(#2)+({#5*cos(#3)},{#5*sin(#3)})$) arc (#3:#4:#5);}

\newcommand{\mc}[1]{{\mathcal #1}}
\newcommand{\mf}[1]{{\mathfrak #1}}

\newcommand{\bb}[1]{{\mathbb #1}}

\newcommand{\<}{\langle}
\renewcommand{\>}{\rangle}
\newcommand{\p}{\partial}
\newcommand{\eps}{\varepsilon}
\newcommand{\pfrac}[2]{\genfrac{}{}{}{1}{#1}{#2}}

\newcommand{\msf}[1]{{\mathsf #1}}

\newcommand{\Sobolev}{L^2([0,1];\mc H^1(0,1))}
\usepackage{graphicx}

\makeatletter
\newcommand*\bigcdot{\mathpalette\bigcdot@{.5}}
\newcommand*\bigcdot@[2]{\mathbin{\vcenter{\hbox{\scalebox{#2}{$\m@th#1\bullet$}}}}}
\makeatother

\newtheorem{theorem}{Theorem}[section]
\newtheorem{lemma}[theorem]{Lemma}
\newtheorem{proposition}[theorem]{Proposition}
\newtheorem{corollary}[theorem]{Corollary}
\theoremstyle{definition}
\newtheorem{definition}[theorem]{Definition}

\theoremstyle{remark}
\newtheorem{remark}[theorem]{Remark}
\numberwithin{equation}{section}

\newif\ifwip
\wiptrue 

\ifwip

\usepackage{marginnote}
\newcommand{\usernote}[3]{
	\marginnote{\raggedright\footnotesize \textcolor{#2}{#1: #3}}
}
\newcommand{\mfnote}[1]{\usernote{Matheus}{cyan}{#1}}
\newcommand{\pgnote}[1]{\usernote{Patrícia}{red}{#1}}
\newcommand{\tfnote}[1]{\usernote{Tertu}{orange}{#1}}

\newcommand{\reviewdel}[1]{}

\newcommand{\freviewdel}[1]{\textcolor{red}{\st{#1}}}

\else

\newcommand{\mfnote}[1]{}
\newcommand{\pgnote}[1]{}
\newcommand{\tfnote}[1]{}

\newcommand{\reviewdel}[1]{}

\newcommand{\freviewdel}[1]{}

\fi

\begin{document}
\setcounter{page}{1}

\centerline{}

\centerline{}

\title[Finite Reservoirs lead to Wentzell Boundary Conditions]{Finite reservoirs lead to Wentzell boundary conditions\\ for independent random walks and exclusion processes}

\author[M. Franco, T. Franco, P. Gonçalves]{Matheus Franco$^1$, Tertuliano Franco$^2$ and Patrícia Gonçalves$^3$}

\address{$^{1}$ Center for Mathematical Analysis,  Geometry and Dynamical Systems,
	Instituto Superior T\'ecnico, Universidade de Lisboa,
	Av. Rovisco Pais, 1049-001 Lisboa, Portugal}
\email{\textcolor[rgb]{0.00,0.00,0.84}{matheusguilherme99@tecnico.ulisboa.pt}}

\address{$^{2}$ UFBA, Instituto de Matem\'atica e Estat\'istica, Campus de Ondina, Av. Milton Santos, S/N. CEP 40170-110. Salvador, Brazil}
\email{\textcolor[rgb]{0.00,0.00,0.84}{tertu@ufba.br}}

\address{$^{3}$ Center for Mathematical Analysis,  Geometry and Dynamical Systems,
	Instituto Superior T\'ecnico, Universidade de Lisboa,
	Av. Rovisco Pais, 1049-001 Lisboa, Portugal}
\email{\textcolor[rgb]{0.00,0.00,0.84}{pgoncalves@tecnico.ulisboa.pt}}


\subjclass[2020]{Primary 60K35; Secondary 35K20}

\keywords{Exclusion process, independent random walks, hydrodynamic limit, propagation of local equilibrium, Wentzell boundary conditions, non-local Dirichlet boundary conditions, non-linear Dirichlet boundary conditions}


\begin{abstract}
	We analyze the scaling limits (hydrodynamics/propagation of local equilibrium) of two particle systems, namely independent random walks (IRW) and symmetric exclusion processes (SEP), both in the discrete one-dimensional segment where the left boundary is in contact with a reservoir, which may hold any number of particles. 
	At rate one a particle  jumps from site $1$ to the reservoir, and at rate $\alpha \eta(0) N^{-\theta}$ a particle jumps from the reservoir to site $1$ (if site $1$ is empty in SEP), where $\eta(0)$ is the total number of particles in the reservoir and $\theta\geq 0$ is a parameter  whose tuning leads to a dynamical phase transition. 
	For every $\theta\geq 0$, the hydrodynamic equation is the heat equation with Neumann boundary conditions at the right boundary for both processes, while the boundary  conditions at the left boundary depend on the value of $\theta$.  For $0\leq \theta<1$, it is given by Neumann boundary conditions, meaning the deposit is asymptotically empty, acting as a barrier. For $\theta>1$, and for IRW  (resp. SEP) it is given by a non-homogeneous (resp. homogeneous) Dirichlet boundary condition, meaning  the reservoir becomes asymptotically infinite, acting as a heat bath (resp. the reservoir behaves as a sink). For $\theta=1$, we obtain a  non-local Dirichlet boundary condition, which is additionally non-linear for SEP. As a by-product, we obtain an  equivalence between solutions to the heat equation with Wentzell boundary conditions and with non-local Dirichlet boundary conditions related to the total mass of the system. 
\end{abstract}

\maketitle

\tableofcontents

\allowdisplaybreaks

\section{Introduction}
Wentzell boundary conditions have been  introduced independently by\break Wentzell in \cite{Wentzell1956Semigroups,Venttsel1959} and by Feller in \cite{Feller1952,Feller1954}, both in the context of 
second order generalized differential operators, where they also established connections with Markov processes. 
A Wentzell boundary condition, also named a Feller-Wentzell boundary condition, is a boundary condition for a partial differential equation (PDE) that relates the second derivative, and possibly the first derivative and/or the function itself. In recent years, this kind of boundary condition has attracted a great deal of attention in the PDE community; see, for instance, \cite{Boccardo1996,Daners2008, Favini2002,Favini2004, Liang2004, Luo_Trudinger_1991, Vasquez, Warma2005Parabolic, Warma2005Trace}. For a review on the subject and  its physical interpretations, we refer the reader to~\cite{Goldstein2006}.

Since the 1980s, rigorous methods for deriving  scaling limits of interacting particle systems (IPS) have been extensively developed, enabling a mathematical description of how microscopic interactions give rise to macroscopic behavior, see e.g. \cite{kl} and \cite{DeMasi_Presutti}. 
Over the last two decades, many papers in this area have dealt with  IPS whose hydrodynamics  is described by a PDE supplemented with various boundary conditions. As examples, we cite \cite{BaldassoMenezesNeumann2017,bertini_landim_mourragui,BouleyLandim2024,efgnt,FrancoGoncalvesLandimNeumann2022, fgn1, fgn2, fgn3,fgschutz2015,FN,FrancoTavares2019,DeMasiPresuttiTsagkarogiannisVares2011,Mourragui_mixed}. The boundary conditions of the parabolic equations obtained in those papers vary among Dirichlet (involving the function itself), Neumann (involving the first derivative), and Robin (relating the function and its first derivative). However, to the best of our knowledge, no one has ever obtained a parabolic equation with Wentzell boundary conditions as the hydrodynamic equation of an  IPS. In this work, we fill this gap in the literature by deriving the  heat equation with Wentzell boundary conditions as the scaling limit of two different discrete systems of IPS. As a by-product of this scaling procedure, we obtain a correspondence between parabolic PDEs with Wentzell boundary conditions and parabolic PDEs with non-local Dirichlet boundary conditions, the latter relating the value of the function at the boundary to its total mass.

\subsection{Independent Random Walks}

Fix a scaling parameter $N$. We consider two IPS evolving under diffusive time scaling and in the finite box $\{0,1,\ldots, N\}$. The first is a system of IRW,  where the jump rate to a neighboring site is always equal to one, except when a jump occurs from  site $0$ to  site $1$, whose associated jump rate  is given by $\alpha N^{-\theta}$, where $\alpha>0$ and 
$\theta\geq 0$. 

For this model, we prove two results. The first is the propagation of local equilibrium. Roughly speaking, this means that, starting the system from a proper measure associated to a continuous profile $\gamma$, the system converges locally to a product measure parametrized by a parameter $\rho(t,u)$ which solves a PDE -- namely, the heat equation supplemented with boundary conditions that  depend on the  value of $\theta$, with initial condition given by $\gamma$.

For $\theta<1$, we obtain  Neumann boundary conditions on the left boundary, while for  $\theta>1$, these are replaced by homogeneous Wentzell boundary conditions given by $\rho''(0)=0$.
Finally, for the critical value $\theta=1$, we obtain  Wentzell boundary conditions of the form $\rho''(0)=\alpha \rho'(0)$. Since the box is isolated at the rightmost site, the right boundary condition is always of homogeneous Neumann type, so we do not mention the right boundary condition from here on.

In the sequel, we prove the hydrodynamic limit for that model, which undergoes a dynamical phase transition. For $\theta<1$, the strength of site zero is not enough to have a macroscopic effect, and the boundary conditions are of Neumann type. For $\theta>1$, the reservoir has a small exit rate of order $N^{-\theta}$, but simultaneously it accommodates a large number of particles of order $N^{\theta}$. The system's density stabilizes close to the reservoir, and in this case the hydrodynamic equation has (local) Dirichlet boundary conditions.
Finally, for the critical value $\theta=1$,  we obtain  non-local Dirichlet boundary conditions involving the total mass of the system. Since the propagation of local equilibrium implies the hydrodynamic limit, as a by-product of these two previous results, we obtain an interesting correspondence between the PDEs above, which can  also be corroborated  directly through a simple heuristic argument at the macroscopic level, see Remark~\ref{remark:2.8}.

\subsection{Symmetric Exclusion Process}
The second IPS is the SEP, where the site $1$ is in contact with a finite reservoir at site zero. The sites from $1$ to $N$ can hold  at most one particle,  but the reservoir can store any finite number of particles. At rate one a particle jumps from site $1$ to site $0$, and at rate $\alpha N^{-\theta}\eta(0)$ a particle jumps from the reservoir to site $1$, but only if the latter is empty, otherwise nothing happens. Here, $\eta(0)$ is the number of particles in the reservoir before the jump. Analogously to IRW, we obtain a hydrodynamic phase transition in the sense that the hydrodynamic equation, the heat equation, is supplemented with different boundary conditions that depend on the value of $\theta$. If $\theta<1$, the reservoir is not strong enough to retain mass, so the hydrodynamic equation is given by the heat equation with  Neumann boundary conditions. If $\theta>1$, the small rate of order $N^{-\theta}$ of the incoming particles  from the reservoir plus the exclusion rule at  site $x=1$ leads to a homogeneous Dirichlet boundary condition $\rho(0)=0$, which means that the density vanishes near the reservoir. Or, equivalently, that the reservoir behaves as a sink. At the critical value $\theta=1$, the hydrodynamic equation is given by the heat equation with a non-linear non-local Dirichlet boundary condition. Furthermore, via the correspondence deduced in the previous case about IRW, we present an equivalent PDE with a non-linear Wentzell boundary for this hydrodynamic equation. \\

As the main features in the proofs, we highlight the understanding of the propagation of local equilibrium in terms of a central limit theorem for the underlying random walk and its reversible measure. In terms of the hydrodynamic limit in the exclusion setting,  we cite: a characterization of the reservoir  in terms of the total mass of the system;  attractiveness, which is  used throughout the text;  and two apparently contradictory facts:  site zero is a trap which retains many particles, yet it does not contribute to the limit of the empirical measure in any range of $\theta$. On the other hand, its presence in the empirical measure helps  in many steps of the proofs. Characterization of reversible measures is provided in all cases, as well as the uniqueness of weak solutions of the corresponding PDE, whose proofs are based on the knowledge of the eigenfunctions of the corresponding Sturm-Liouville problem. We highlight that each regime  requires sharp arguments that only hold for the corresponding  range of~$\theta$. For instance, the set of test functions in each case had to be carefully chosen: if that set of test functions is too large, the uniqueness of weak solutions of the corresponding PDE becomes easier, but the convergence of Dynkin's martingale becomes intractable, and the opposite occurs if the set of test functions is too small.

We point out that the problem addressed is a type of Bouchaud trap model, where a single trap (the finite reservoir) is strong enough to modify  the entire  macroscopic behavior of the system. It is worth mentioning two closely related articles. 
We first mention the paper \cite{jaralandimteixeira} by Jara, Landim and Teixeira, which studies the hydrodynamic limit of the IRW, where the waiting time parameter at a site $x$ in the discrete torus $\bb T_N$ is determined by a measure $W$. There, in dimension one, the hydrodynamic equation is given by a parabolic equation involving the Krein--Feller operator $\frac{d}{dW}\frac{d}{dx}$ associated with the degenerate diffusion obtained in \cite{Fontes_Isopi_Newman} as the scaling limit of the random walk in the trap environment. Although \cite{jaralandimteixeira} is set on the torus, whereas our model evolves on the segment, the result we obtain here for the critical parameter $\theta=1$ for IRW is essentially connected to \cite[Theorem~2.2]{jaralandimteixeira}. Taking, as in \cite{jaralandimteixeira}, the measure $W$ to be Lebesgue measure plus a Dirac delta at the origin, and $\gamma=1$ in the hypotheses of that theorem, our result yields a classical description of the hydrodynamic equation of \cite{jaralandimteixeira} in terms of a parabolic equation with Wentzell boundary conditions.

Second, the paper \cite{fgschutz2015} by Franco, Gonçalves and Sch\"utz, which studied the hydrodynamic limit of the SEP with a slow site in the discrete torus. In that model, there is a single site acting as a trap, whose exit rate is $\alpha N^{-\theta}$, with $\theta>0$. That model corresponds to the exclusion process in contact with a finite reservoir here studied under the additional rule that requires the reservoir to hold at most one particle. However, this condition is far from minor and our results neither cover nor are covered by that model. Thus, the hydrodynamic limit  of that model in the critical parameter $\theta=1$ remains a difficult  open problem.   

The paper is organized as follows. In Section~\ref{s2} we state definitions and results.
In Section~\ref{s3}, we study the propagation of local equilibrium for IRW in contact with the finite reservoir. In Section~\ref{s4}, we prove the hydrodynamic limit of IRW in contact with a finite reservoir. In Section~\ref{s5}, we prove the hydrodynamic limit of the SEP in contact with the finite reservoir. Finally, in Section~\ref{sec:sec7}, we provide some computational simulations.

\section{Statement of results}\label{s2}
In this paper, we adopt the convention  that $\bb N =\{0,1,2,\ldots\}$, and we fix once and for all a time-horizon $T>0$. 
 Given two real-valued functions $f,g$ defined on the same space $X$, we hereafter write  $f(u) \lesssim g(u)$ if there exists a constant $C$ independent of $u$ such that $f(u) \le C g(u)$ for every $u\in X$. 
Moreover, we will write $f(u) = {O} (g(u) )$ if the condition $|f (u) | \lesssim |g(u) |$ is satisfied for all $u\in X$.

\subsection{IRW in contact with a finite reservoir}

This corresponds to the Markov process $\{\eta_t:t\geq 0\}$ with state-space $\Omega_N=\bb N^{N+1}$, whose generator $\msf{L}_N$ acts on functions $f:\Omega_N \to \bb{R}$ as
\begin{equation}\label{generator_rw}
	\msf{L}_N f(\eta) \;=\; \sum_{\substack{x,y\,\in\, \{0,\ldots,N\} \\ |x - y| \,=\, 1}} \xi_{x,y}^N \big[f(\eta^{x,y}) - f(\eta)\big]\,,
\end{equation}
where the jump rates $\xi_{x,y}^N$ are given by
\begin{equation*}
	\xi_{x,y}^N \;=\; N^2 \times 
	\begin{cases}
		\dfrac{\alpha \eta(0)}{N^\theta}, & \text{if } x = 0, y = 1, \vspace{5pt}\\
		\eta(x), & \text{if } 1\leq x \leq N, |x - y| =1, \vspace{2pt}\\
		0, & \text{ otherwise, }
	\end{cases}
\end{equation*}
with $\theta\geq 0$, and 
\begin{equation}\label{eq:eta_xy_rw}
	\eta^{x,y}(z) \;=\;
	\begin{cases}
        \eta(x)-1, & \text{if } z = x \text{ and }\eta(x) >0, \\
		\eta(y)+1, & \text{if } z = y \text{ and }\eta(x) >0, \\
        \eta(x), & \text{if } z = x \text{ and }\eta(x) =0, \\
        \eta(y), & \text{if } z = y \text{ and }\eta(x) =0, \\
		\eta(z), & \text{otherwise.}
	\end{cases}
\end{equation}
Note that $\eta^{x,y}= \eta$ if the site $x$ is  empty. An illustration of the jump rates is provided in Figure~\ref{fig:fig2}. Although the Markov process $\{\eta_t:t\in[0,T]\}$ depends on $N$, $\alpha$ and $\theta$, we do not index it on these parameters to not overload notation.
	\begin{figure}[!htb]
		\centering
		\begin{tikzpicture}[scale=1.3]
			\centerarc[thick,<-](3.5,2.3)(0:170:0.45);
			\centerarc[thick,->](3.5,-0.3)(-10:-170:0.45);
			\centerarc[thick,<-](4.5,-0.3)(-10:-170:0.45);
			\centerarc[thick,<-](8.5,1.8)(10:170:0.45);
			\centerarc[thick,->](7.5,1.8)(10:170:0.45);
			\centerarc[thick,->](11.5,0.8)(10:170:0.45);
			\draw (3,0) -- (12,0);
			\shade[ball color=black](3,0) circle (0.25);
			\shade[ball color=black](3,0.5) circle (0.25);
			\shade[ball color=black](3,1) circle (0.25);
			\shade[ball color=black](3,1.5) circle (0.25);
			\shade[ball color=black](3,2) circle (0.25);
			\shade[ball color=black](4,0) circle (0.25);
			\shade[ball color=black](4,0.5) circle (0.25);
			\shade[ball color=black](4,1.0) circle (0.25);
			\shade[ball color=black](5,0) circle (0.25);
			\shade[ball color=black](5,0.5) circle (0.25);
			\shade[ball color=black](6,0) circle (0.25);
			\shade[ball color=black](7,0) circle (0.25);
			\shade[ball color=black](7,0.5) circle (0.25);
			\shade[ball color=black](8,0) circle (0.25);
			\shade[ball color=black](8,0.5) circle (0.25);
			\shade[ball color=black](8,1) circle (0.25);
			\shade[ball color=black](8,1.5) circle (0.25);
			\shade[ball color=black](9,0) circle (0.25);
			\shade[ball color=black](9,0.5) circle (0.25);
			\shade[ball color=black](9,1) circle (0.25);
			\shade[ball color=black](10,0) circle (0.25);
			\shade[ball color=black](10,0.5) circle (0.25);
			\shade[ball color=black](11,0) circle (0.25);
			\shade[ball color=black](12,0) circle (0.25);
			\shade[ball color=black](12,0.5) circle (0.25);
			\filldraw[fill=white, draw=black]
			(6,0) circle (.25);
			\draw (2.7,-0.1) node[anchor=north] {\small $0$};
			\draw (3.7,-0.1) node[anchor=north] {\small $1$};
			\draw (4.7,-0.1) node[anchor=north] {\small $2$};
			\draw (7.7,-0.15) node[anchor=north] {\small $x$};
			\draw (8.55,-0.1) node[anchor=north] {\small $x\!+\!1$};
			\draw (11.65,-0.1) node[anchor=north] {\small $N$};
			\draw (7.5,2.3) node[anchor=south]{$ \eta(x)$};
			\draw (8.5,2.3) node[anchor=south]{$\eta(x)$};
			\draw (3.5,-0.8) node[anchor=north]{$\eta(1)$};
			\draw (4.5,-0.8) node[anchor=north]{$\eta(1)$};
			\draw (3.5,3.7) node[anchor=north]{$\displaystyle\frac{\alpha \eta(0)}{N^\theta}$};
			\draw (11.5,1.3) node[anchor=south]{$\eta(N)$};
		\end{tikzpicture}
		\bigskip
		\caption{Illustration of jump rates (without the diffusive scaling parameter $N^2$) for IRW in contact with a finite reservoir.}\label{fig:fig2}
	\end{figure}

Our first result provides a family of  reversible measures for this process, consisting of Poisson product measures whose parameters at the sites $1,\ldots, N$ are all equal, and the parameter at zero reflects the strength of the reservoir, which acts as a trap. We properly define it below.
\begin{proposition}\label{prop:reversible_RW}
	For any $\lambda >0$,  the product measure
	\begin{equation}\label{nu_lambda}
		\nu_\lambda \;=\; \mathrm{Poisson}\Big(\frac{N^\theta}{\alpha}\cdot \lambda\Big)\otimes \bigotimes_{x=1}^N \mathrm{Poisson}(\lambda)
	\end{equation}
	in the space $\Omega_N$  is reversible for the Markov process $\{\eta_t: t \geq 0\}$.
\end{proposition}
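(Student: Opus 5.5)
Since the generator \eqref{generator_rw} is a sum of nearest-neighbour transport terms, I will verify detailed balance bond by bond: for every edge $\{x,x+1\}$, $0\le x\le N-1$, and every configuration $\eta\in\Omega_N$ with $\eta(x)\ge 1$,
\begin{equation*}
\nu_\lambda(\eta)\,\xi^N_{x,x+1}(\eta) \;=\; \nu_\lambda(\eta^{x,x+1})\,\xi^N_{x+1,x}(\eta^{x,x+1}),
\end{equation*}
where $\xi^N_{x,y}(\eta)$ denotes the rate evaluated at $\eta$. Reversibility then follows by the standard argument. The generator is a sum over ordered bonds of the operators $f\mapsto \xi^N_{x,y}[f(\eta^{x,y})-f(\eta)]$. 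Detailed balance for each pair of reverse moves makes each bond's contribution to $E_{\nu_\lambda}[g\,\msf L_N f]$ symmetric in $f,g$, via the change of variables $\eta\mapsto\eta^{x,x+1}$. This change of variables is a bijection from $\{\eta(x)\ge1\}$ onto $\{\eta(x+1)\ge1\}$, with inverse $\eta\mapsto\eta^{x+1,x}$.

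Write $\lambda_0=N^\theta\lambda/\alpha$ and $\lambda_x=\lambda$ for $x\ge1$, and set $\zeta=\eta^{x,x+1}$. Since $\nu_\lambda$ is a product measure, the ratio $\nu_\lambda(\zeta)/\nu_\lambda(\eta)$ involves only sites $x$ and $x+1$. Using $e^{-\lambda_x}\lambda_x^{k}/k!$, it equals
\begin{equation*}
\frac{\lambda_{x+1}}{\lambda_x}\cdot\frac{\eta(x)}{\eta(x+1)+1}.
\end{equation*}
The required identity therefore reduces to checking
\begin{equation*}
\xi^N_{x,x+1}(\eta)\,\lambda_x\,(\eta(x+1)+1)=\xi^N_{x+1,x}(\zeta)\,\lambda_{x+1}\,\eta(x).
\end{equation*}

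There are two cases, according to the bond. For a bulk bond, $1\le x\le N-1$, the rates are $N^2\eta(x)$ and $N^2\zeta(x+1)=N^2(\eta(x+1)+1)$, and $\lambda_x=\lambda_{x+1}$, so both sides coincide. For the boundary bond $x=0$, the forward rate is $N^2\alpha\eta(0)N^{-\theta}$ and the backward rate is $N^2\zeta(1)=N^2(\eta(1)+1)$. The identity becomes
\begin{equation*}
\frac{\alpha\eta(0)}{N^\theta}\cdot\frac{N^\theta\lambda}{\alpha}\,(\eta(1)+1)=(\eta(1)+1)\,\lambda\,\eta(0),
\end{equation*}
which holds exactly. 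This is precisely why the reservoir parameter is $N^\theta\lambda/\alpha$.

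No step is a real obstacle. The only care needed is bookkeeping: the rates depend on the occupation numbers, so one must evaluate the reverse rate at $\zeta$ rather than at $\eta$. One must also note that moves from an empty site are trivial ($\eta^{x,y}=\eta$) and so contribute nothing. Finally, the right endpoint $N$ has no outgoing bond beyond $N-1$, so no additional boundary check is needed there.
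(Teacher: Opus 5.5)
Your proof is correct and is essentially the paper's argument: the paper also reduces reversibility to detailed balance across the bond $\{0,1\}$. It uses the same Poisson ratio $\nu_\lambda(\eta)/\nu_\lambda(\xi)=\frac{\psi_0}{\psi_1}\cdot\frac{\xi(1)}{\xi(0)+1}$ with $\xi=\eta^{0,1}$, and the same choice $\psi_0=\frac{N^\theta}{\alpha}\psi_1$. The only differences are cosmetic: the paper writes detailed balance in integrated form against $f,g$ via that change of variables, and it cites the bulk bonds as standard rather than checking them pointwise as you do.
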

\subsubsection{Propagation of local equilibrium}
Next, we present the propagation of local equilibrium for this model, which is a slightly stronger result  than the hydrodynamic limit (see \cite[Proposition 0.4, page 44]{kl}, for instance). We first recall the meaning of propagation of local equilibrium: starting from a product measure associated to a profile, and properly rescaling the system (in our case, in the diffusive scaling $t N^2$), it  will locally converge at a future time to a product measure whose parameter is determined by the evolution of the initial profile through a certain PDE. Below, since we are going to take the limit in $N$ with  $k\in \bb N$ fixed and $0<u<1$, we assume without loss of generality that  $ 1 \leq \lfloor u N\rfloor - k \leq \lfloor u N\rfloor + k\leq N$. 
\begin{theorem}[Propagation of Local Equilibrium]\label{thm:propagation_local}
	Let $\gamma : [0,1] \to [0, \infty)$ be a continuous profile, and define the slowly varying measure
	\begin{equation}\label{mu_N}
		\mu_N \;=\; \mathrm{Poisson}\Big(\frac{N^\theta}{\alpha} \cdot \gamma(\pfrac{0}{N})\Big)\otimes \bigotimes_{x=1}^N\mathrm{Poisson}\big(\gamma(\pfrac{x}{N})\big)\,.
	\end{equation}

	Consider the system of IRW $\{\eta_t: t \geq 0\}$ on $\Omega_N$ defined by~\eqref{generator_rw}, starting from $\mu_N$. Then, for any fixed $0<u<1$, for any positive integer $k$ and any $t>0$, the random vector $\big(\eta_{t}(\lfloor uN\rfloor - k), \ldots, \eta_{t}(\lfloor uN\rfloor + k)\big)$ converges in distribution to the product of the $2k+1$ Poisson distributions with the same parameter $\rho(t,u)$, one for each site $\lfloor uN \rfloor + i$ with $|i|\leq k$, namely 
	of constant parameter
	\begin{equation*}
		\bigotimes_{i=-k}^k \mathrm{Poisson}\big(\rho(t,u)\big)\,,
	\end{equation*}
	where $\rho: [0,T] \times [0,1]\to \bb R$ is as follows. In all three cases, the right boundary condition is of homogeneous Neumann type and, as this holds for every PDE in this paper, we do not mention the right boundary in the text to avoid repetitions.
	\begin{itemize}
		\item if $\theta \in [0,1)$, the unique strong solution to the heat equation with Neumann boundary condition at the left  boundary, given by
		\begin{equation}\label{eq_1}
			\begin{cases}
				\partial_t \rho(t,u) = \p_{uu}^2 \rho(t,u),  & \text{for } (t,u) \in (0,T]\times (0,1),    \vspace{3pt}\\
				\p_u\rho(t,0)  = \p_u\rho(t,1) = 0, & \text{for }  t \in (0,T],  \vspace{3pt}\\
				\rho(0,u) = \gamma(u), &\text{for }  u\in [0,1]. \vspace{3pt}
			\end{cases}
		\end{equation}
		\item if $\theta =1 $, the unique strong solution to the heat equation with Wentzell boundary condition at the left boundary,  given by
		\begin{equation}\label{eq_2}
			\begin{cases}
				\partial_t \rho(t,u) = \p_{uu}^2 \rho(t,u),  &\text{for }    (t,u) \in (0,T]\times (0,1),     \vspace{3pt}\\
				\p_{uu}^2\rho(t,0)= \alpha\p_{u}\rho(t,0),  &\text{for }    t \in (0,T],  \vspace{3pt}\\
				\p_u\rho(t,1) = 0, &\text{for }     t \in (0,T],  \vspace{3pt}\\
				\rho(0,u) = \gamma(u), &\text{for }    u\in [0,1]. \vspace{3pt}
			\end{cases} 
		\end{equation}
		
		\item if $\theta \in (1,\infty)$, the unique strong solution to the heat equation with homogeneous Wentzell boundary condition at the left boundary,   given by
		\begin{equation}\label{eq_3}
			\begin{cases}
				\partial_t \rho(t,u) = \p_{uu}^2 \rho(t,u),  & \text{for }  (t,u) \in (0,T]\times (0,1),     \vspace{3pt}\\
				\p_{uu}^2\rho(t,0) = 0, &  \text{for }  t \in (0,T], \vspace{3pt}\\
				\p_u\rho(t,1) = 0, &  \text{for }  t \in (0,T],  \vspace{3pt}\\
				\rho(0,u) = \gamma(u), & \text{for }  u\in [0,1]. \vspace{3pt}
			\end{cases} 
		\end{equation}
	\end{itemize}
\end{theorem}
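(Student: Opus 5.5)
The system is made of independent particles, so the plan is to reduce everything to a single random walk. Write $p_t^N(y,x)$ for the transition probability of one particle started at $y$ and moved by the generator~\eqref{generator_rw}: rate $N^2$ to each neighbour from $1\le y\le N$ (reflected at $N$), and rate $N^{2-\theta}\alpha$ from $0$ to $1$. By the Poisson thinning/superposition property, if $\eta_0\sim\mu_N$ is a product of Poisson laws, then $\eta_t$ is again a product of Poisson laws, with means
\begin{equation*}
\rho_N(t,x)\;=\;\frac{N^\theta}{\alpha}\gamma(0)\,p^N_{t}(0,x)\;+\;\sum_{y=1}^N \gamma(\pfrac{y}{N})\,p^N_{t}(y,x).
\end{equation*}
The claim therefore reduces to showing that $\rho_N(t,\lfloor uN\rfloor+i)\to\rho(t,u)$ for each fixed $|i|\le k$. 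The independence across sites is exact, so no additional decorrelation argument is needed.

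Next use reversibility. Proposition~\ref{prop:reversible_RW} shows that the single walk is reversible with respect to the measure $m_N(0)=N^\theta/\alpha$, $m_N(x)=1$ for $x\ge1$; in other words $m_N(y)p_t(y,x)=m_N(x)p_t(x,y)$. Hence
\begin{equation*}
\rho_N(t,x)\;=\;E_x\big[\gamma(X^N_{t}/N)\big],\qquad x\ge 1,
\end{equation*}
where $X^N$ is the walk started at $x$, run in time $tN^2$ and viewed at macroscopic scale. The question becomes a functional limit for $X^N_{\cdot}/N$ started at $u$. The walk is reflected at $1$ in the sense that whenever it enters $0$ it stays there for an exponential time with mean $N^{\theta-2}/\alpha$. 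Let $L$ be the local time at $0$. In macroscopic time the walk makes about $N$ visits to $0$ per unit of local time, so the total sticky time is of order $N\cdot N^{\theta-2}=N^{\theta-1}$. This produces the three regimes:
\begin{itemize}
\item for $\theta<1$ the sticky time vanishes, and the limit is reflected Brownian motion on $[0,1]$ (the Neumann case);
\item for $\theta=1$ the limit is sticky reflected Brownian motion with stickiness parameter tied to $\alpha$, i.e. the time change $t\mapsto t+\alpha^{-1}\ell_t$ applied to reflected BM with local time $\ell$, whose generator has the Wentzell condition $f''(0)=\alpha f'(0)$;
\item for $\theta>1$ the sticky time blows up, so the walk is absorbed at $0$ once it hits it, which gives $f''(0)=0$.
\end{itemize}

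The convergence would be proved by representing $X^N$ as a time change of the reflected walk without holding, $Y^N$, with clock $A^N_t=t+\frac{1}{\alpha N^{\theta-1}}\cdot\frac{1}{N}\#\{\text{visits of }Y^N\text{ to }0\text{ up to }tN^2\}$. The rescaled visit count converges jointly with $Y^N/N$ to (a multiple of) the local time of reflected BM; this is the standard invariance principle for local times. Continuity of the inverse time change then gives convergence of $X^N/N$, uniformly in starting points near $u$, using continuity of $\gamma$ and the fact that fixed $i$ amounts to a starting point shifted by $i/N$. Finally, $E_u[\gamma(Z_t)]$ for the limit process $Z$ is identified as the strong solution of \eqref{eq_1}, \eqref{eq_2} or \eqref{eq_3}, via the Feller semigroup whose generator has domain given by the corresponding boundary condition, together with uniqueness of strong solutions.

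The main obstacle is the critical case $\theta=1$. There we must get the exact constant in the local time–visit count relation so that the Wentzell coefficient comes out as $\alpha$. We must also justify that $E_u[\gamma(Z_t)]$ is the strong solution of \eqref{eq_2}, i.e. identify the generator domain of sticky reflected BM. I would try first to bypass this identification by checking directly that the discrete means satisfy a discrete heat equation whose boundary relation at $0,1$ converges to $\partial^2_{uu}\rho=\alpha\partial_u\rho$, and then to conclude by uniqueness.
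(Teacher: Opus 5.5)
Your reduction is the paper's. The Laplace-transform computation there shows that $\eta_t$ is a Poisson product with parameter $\psi^N_t(x)=\pi(x)\sum_y p_t(x,y)\lambda(y)/\pi(y)$. Reversibility of the single walk with respect to $\pi\propto m_N$ then turns this into $E[\gamma(X^x_t/N)]$ for $x\ge 1$, exactly as in your thinning argument. The final identification is also the same: the limiting Feller semigroup maps $C[0,1]$ into $\mf D(\bb L_\theta)$ for $t>0$.

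The routes differ only in the single-walk limit, Proposition~\ref{prop:RW}. The paper uses no local times. It applies the Ethier--Kurtz generator-convergence theorem on the core $\mf D(\bb L_\theta)$, with $f_N=\pi_N f+\widetilde f_N+\widehat f_{N,\theta}$. The correctors compensate for the discrete boundary generators not reproducing $f''$: at $x=N$ one only gets $\frac12 f''(1)$, and at $x=0$, for $\theta<1$, one gets $\frac{\alpha}{2}f''(0)$ or $0$. Each regime is then a one-line Taylor expansion at the origin: $\alpha N[f(\pfrac{1}{N})-f(0)]\to\alpha f'(0)=f''(0)$ for $\theta=1$, and $\alpha N^{2-\theta}[f(\pfrac{1}{N})-f(0)]\to 0=f''(0)$ for $\theta>1$. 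So the Wentzell coefficient and the limiting domain come out automatically, and all $\theta$ are handled uniformly.

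Your pathwise time change of the reflected walk gives an explicit construction of the limit and explains why $\theta=1$ is critical. It costs more, though:
\begin{itemize}
\item a joint invariance principle for the walk and its boundary local time, with the exact normalization;
\item a separate argument for $\theta>1$, since the limiting clock is infinite after the hitting time $\tau_0$ and continuity of the inverse time change is unavailable; you need that $\ell$ increases immediately after $\tau_0$ and that discrete hitting times of $0$ converge;
\item an It\^o--Tanaka identification of the generator of the time-changed process, which the generator method gets for free.
\end{itemize}

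One slip to fix: your clock has the exponent inverted. The extra time spent at $0$ is $\#\{\text{visits}\}\cdot N^{\theta-2}/\alpha=\frac{N^{\theta-1}}{\alpha}\cdot\frac{1}{N}\#\{\text{visits}\}$. Your coefficient $\frac{1}{\alpha N^{\theta-1}}$ would make the walk absorbed for $\theta<1$ and reflected for $\theta>1$, contradicting your own three bullets. Also, your rates already contain $N^2$, so ``run in time $tN^2$'' double counts.

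An exact version is $A^N_t=t+\big(\frac{N^\theta}{\alpha}-1\big)\int_0^t\mathbf 1\{Y^N_s=0\}\,ds$. It is obtained by rescaling the $\mathrm{Exp}(N^2)$ holding times of $Y^N$ at $0$ into $\mathrm{Exp}(\alpha N^{2-\theta})$ ones. Since $\int_0^t\mathbf 1\{Y^N_s=0\}\,ds\approx \ell_t/N$, with $\ell$ the occupation-density local time at $0$ of the reflected limit $B$ (generator $f''$), for $\theta=1$ you get $A_t=t+\ell_t/\alpha$. The Skorokhod reflection term of $B$ at $0$ equals this $\ell$. It\^o's formula then shows that, for $f\in C^2[0,1]$ with $f'(1)=0$ and $Z=B\circ A^{-1}$, the process $f(Z_t)-\int_0^t f''(Z_r)\,dr$ is a martingale precisely when $f''(0)=\alpha f'(0)$. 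So the constant you were worried about is the right one.
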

 Note that $\mu_N$ is the slowly varying version of the reversible measures $\nu_\lambda$ in \eqref{nu_lambda}, with the constant parameter $\lambda$ replaced by $\gamma(\pfrac{x}{N})$.  The continuity of $\gamma$ is required to identify the limit density (Proposition~\ref{prop:RW}), as required by Portmanteau's Theorem.
\subsubsection{Hydrodynamic limit}
The next theorem we present is about the hydrodynamic limit of IRW. 
Let $D([0,T]; \Omega_N)$ be the path space of
c\`adl\`ag trajectories taking values on the space $\Omega_N$. For a
measure $\mu_N$ on $\Omega_N$, denote by $\bb P_{\mu_N}^{\theta, N}$ the
probability measure on $D([0,T]; \Omega_N)$ induced by the
initial state $\mu_N$ and the Markov process $\{\eta_t : t\ge 0\}$, and we denote by  $\bb E_{\mu_N}^{\theta, N}$  the expectation with respect to $\bb P_{\mu_N}^{\theta, N}$.

For $\<\cdot, \cdot \>$, we denote both the inner product associated with the $L^2[0,1]$ space, having the Lebesgue measure as the reference measure and the duality bracket (an integral of a test function against a measure ). For the sake of simplicity of notation, in the sequel we will write $\rho_t(u)$ for $\rho(t,u)$. Denote by $C^k[0,1]$ the space of functions $f:[0,1]\to \bb R$ with continuous derivatives up to order $k$, being $C[0,1]$ the set of continuous functions. A prime denotes the derivative of a function of the space variable alone, so that $H'$ and $H''$ are the first and second derivatives of $H\in C^2[0,1]$, while $\p_t$, $\p_u$ and $p^2_{uu}$ are used for functions of both time and space.

\begin{definition}\label{def:2.4}
Let $\gamma:[0,1]\to [0,\infty)$, with $\gamma\in L^2[0,1]$, and fix a constant $M\in \bb R$.
	We say that $\rho: [0,1]\times [0,T]\to \bb R$ is a weak solution to the heat equation with a non-local Dirichlet boundary condition at the left boundary and a Neumann boundary condition at the right boundary
	\begin{equation}\label{weak_solution_non_local}
		\begin{cases}
			\p_t \rho(t,u) = \p_{uu}^2 \rho(t,u),  & \text{for } (t,u) \in (0,T]\times (0,1),   \vspace{3pt}\\
			\rho(t,0)= \alpha\Big(M - \int_0^1 \rho(t,u)du\Big),  & \text{for } t \in (0,T],  \vspace{3pt}\\
			\p_u\rho(t,1) = 0, & \text{for }  t \in (0,T],  \vspace{3pt}\\
			\rho(0,u) = \gamma(u), &  \text{for } u\in [0,1] \vspace{3pt}
		\end{cases}
	\end{equation}
	if   $\rho\in L^2([0,T]\times [0,1])$ and, for any  $H\in C^{2}[0,1]$ such that $H(0)=H'(1)=0$, 
	\begin{equation}\label{integral_weak_solution_non_local}
		\big\<\rho_t, H\big\> - \big\<\gamma, H\big\>  =  \int_0^t \big\< \rho_s,    H'' \big\>\, ds  + \int_0^t \alpha \Big(M- \int_0^1\rho_s(u)du\Big)  H'(0) \, ds
	\end{equation}
	for any $t\in[0,T]$.
\end{definition}

\begin{proposition}\label{prop:uniqueness_2.9}
There exists at most one weak solution to \eqref{weak_solution_non_local}.
\end{proposition}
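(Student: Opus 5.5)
The plan is to reduce to the homogeneous problem and then test against a complete family of eigenfunctions of the ``adjoint'' Sturm--Liouville problem.

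\emph{Reduction.} Let $\rho^1,\rho^2$ be two weak solutions with the same data $\gamma$ and $M$, and set $w=\rho^1-\rho^2\in L^2([0,T]\times[0,1])$. Subtracting the two identities \eqref{integral_weak_solution_non_local}, the constants $M$ and $\gamma$ cancel. Writing $\int_0^1 w_s(u)\,du=\langle w_s,1\rangle$, we get, for every $H\in C^2[0,1]$ with $H(0)=H'(1)=0$ and every $t\in[0,T]$,
\begin{equation*}
\langle w_t,H\rangle=\int_0^t\big\langle w_s,\,H''-\alpha H'(0)\big\rangle\,ds .
\end{equation*}
So I look for test functions satisfying $H''-\alpha H'(0)=-\lambda H$, $H(0)=0$, $H'(1)=0$. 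For such $H$, the function $\varphi(t)=\langle w_t,H\rangle$ satisfies $\varphi(t)=-\lambda\int_0^t\varphi(s)\,ds$ for all $t$. Hence $\varphi$ is continuous, and Gronwall's inequality gives $\varphi\equiv 0$.

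\emph{Constructing the test functions.} Differentiating turns the non-local problem into a local one. Let $G=H'$. Then $G''=-\lambda G$ on $(0,1)$. At $u=0$, using $H(0)=0$ in the equation, we get $G'(0)=H''(0)=\alpha H'(0)=\alpha G(0)$. At $u=1$ we have $G(1)=H'(1)=0$. This is a regular self-adjoint Robin/Dirichlet Sturm--Liouville problem. By the classical theory (and since $\alpha>0$ all eigenvalues are positive), its eigenfunctions $\{G_n\}_{n\ge1}$ are smooth and form an orthonormal basis of $L^2[0,1]$.

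Conversely, set $H_n(u)=\int_0^u G_n(v)\,dv$. Then $H_n\in C^2[0,1]$, $H_n(0)=0$ and $H_n'(1)=G_n(1)=0$, so $H_n$ is an admissible test function. Moreover
\begin{equation*}
H_n''(u)=G_n'(u)=G_n'(0)-\lambda_n\int_0^u G_n=\alpha H_n'(0)-\lambda_n H_n(u),
\end{equation*}
which is exactly the required relation. Therefore $\langle w_t,H_n\rangle=0$ for all $n$ and all $t$.

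\emph{Completeness.} This is the step I expect to be the main point, since the family $\{H_n\}$ is not orthogonal in $L^2$. I handle it by integrating by parts. Fix $t$ such that $w_t\in L^2$ and put $F(u)=\int_u^1 w_t(v)\,dv$. Since $H_n(0)=0$, Fubini gives
\begin{equation*}
\langle w_t,H_n\rangle=\int_0^1 w_t(v)\int_0^v G_n(u)\,du\,dv=\langle F,G_n\rangle .
\end{equation*}
Hence $\langle F,G_n\rangle=0$ for all $n$. Completeness of $\{G_n\}$ forces $F=0$, and differentiating gives $w_t=0$ almost everywhere.

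Doing this for almost every $t$ gives $w=0$ in $L^2([0,T]\times[0,1])$. Equivalently, since $\langle w_t,H_n\rangle=0$ holds for every $t$, we get $\rho^1_t=\rho^2_t$ for every $t$ at which the two are evaluated, which proves uniqueness.
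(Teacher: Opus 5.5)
Your proof is correct, and it takes a genuinely different route from the paper.

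\textbf{The paper's route.} The paper works in the basis $\Psi_k(u)=\sqrt2\sin(\pi(k+\tfrac12)u)$ of the Dirichlet--Neumann Laplacian. That basis does \emph{not} diagonalize the non-local term. The paper therefore studies the $H^{-1}$-type energy $\mathcal E(t)=\sum_k\langle\xi_t,\Psi_k\rangle^2/(2\lambda_k)$. It handles the cross term through the expansion $1=\sum_\ell \frac{2}{\Psi_\ell'(0)}\Psi_\ell$ and the identity $\frac{2}{\Psi_\ell'(0)}=\frac{\Psi_\ell'(0)}{\lambda_\ell}$, and arrives at $\mathcal E'(t)=-\|\xi_t\|_2^2-\alpha\langle\xi_t,1\rangle^2\le 0$.

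\textbf{Your route.} You instead diagonalize the dual operator $H\mapsto H''-\alpha H'(0)$ exactly. Passing to $G=H'$ turns it into the local Robin--Dirichlet problem $G'(0)=\alpha G(0)$, $G(1)=0$. Each coefficient $\langle w_t,H_n\rangle$ then solves a scalar linear ODE with zero initial datum. You also sidestep the non-orthogonality of $\{H_n\}$ cleanly through $\langle w_t,H_n\rangle=\langle F,G_n\rangle$ with $F(u)=\int_u^1 w_t$.

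\textbf{What your route buys.} It gains rigor and economy. There is no infinite series whose term-by-term differentiation and convergence must be justified; compare the remark the paper adds after its proof. You use no sign information on $\alpha$ or on the eigenvalues. For $\alpha=0$ your $H_n$ are exactly multiples of the $\Psi_k$, so you also recover the Dirichlet case.

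\textbf{What the paper's route buys.} The dissipation argument only needs the boundary contribution to have a sign, which makes it more robust. It carries over essentially verbatim to the non-linear SEP boundary condition in Proposition~\ref{prop:unique_non_linear_Dirichlet}. There the difference satisfies $\langle\xi_t,H\rangle=\int_0^t\langle\xi_s,H''-\alpha\Gamma_sH'(0)\rangle\,ds$, with a time- and solution-dependent coefficient $\Gamma_s>0$. Your fixed test functions $H_n$ no longer decouple the coefficients in that setting. The paper's identity $\mathcal E'(t)=-\|\xi_t\|_2^2-\alpha\Gamma_tA_t^2\le 0$ still closes the argument there.
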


\begin{definition}\label{def:2.5} Let $\gamma:[0,1]\to [0,\infty)$, with $\gamma\in L^2[0,1]$, where the value $\gamma(0)$ is fixed. 
	We say that $\rho: [0,1]\times [0,T]\to \bb R$ is a weak solution to the heat equation with a Dirichlet boundary condition at the left boundary, given by
	\begin{equation}\label{weak_solution_Dirichlet}
		\begin{cases}
			\p_t \rho(t,u) = \p_{uu}^2 \rho(t,u),  & \text{for } (t,u) \in (0,T]\times (0,1),   \vspace{3pt}\\
			\rho(t,0)= \gamma(0),  & \text{for } t \in (0,T],  \vspace{3pt}\\
			\p_u\rho(t,1) = 0, &  \text{for } t \in (0,T],  \vspace{3pt}\\
			\rho(0,u) = \gamma(u), & \text{for } u\in [0,1] \vspace{3pt}
		\end{cases}
	\end{equation}
	if $\rho\in L^2([0,T]\times [0,1])$ and with the same class of test functions,  
	\begin{equation}\label{integral_weak_solution_Dirichlet_homogeneous}
		\big\<\rho_t, H\big\> - \big\<\gamma, H\big\>  = \int_0^t \big\< \rho_s,   H'' \big\>\, ds  + \int_0^t  \gamma(0)  H'(0)\,  ds
	\end{equation}
	for any $t\in[0,T]$.
\end{definition}

Note that Definitions~\ref{def:2.4} and \ref{def:2.5} are two instances of a general one if we write $\rho(t,0)= c - \kappa\int_0^1\rho(t,u)\,du$, with $(c,\kappa)=(\alpha M,\alpha)$ in \eqref{weak_solution_non_local} and $(c,\kappa)=(\gamma(0),0)$ in \eqref{weak_solution_Dirichlet}.

\begin{proposition}\label{prop:unique_4.3}
There exists at most one weak solution to \eqref{weak_solution_Dirichlet}.
\end{proposition}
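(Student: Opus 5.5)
By linearity, I will take the difference $w=\rho^1-\rho^2$ of two weak solutions of \eqref{weak_solution_Dirichlet} in the sense of Definition~\ref{def:2.5}. Then $w\in L^2([0,T]\times[0,1])$ and, for every $H\in C^2[0,1]$ with $H(0)=H'(1)=0$,
\begin{equation*}
\<w_t,H\> = \int_0^t \<w_s,H''\>\,ds \qquad \text{for all } t\in[0,T],
\end{equation*}
since the initial data $\gamma$ and the boundary term $\gamma(0)H'(0)$ cancel. The goal is to show $w\equiv 0$. I will use the eigenfunctions of the Sturm--Liouville problem $-H''=\lambda H$ on $[0,1]$ with $H(0)=H'(1)=0$. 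These are
\begin{equation*}
H_k(u)=\sqrt2\,\sin\big((k+\tfrac12)\pi u\big), \qquad \lambda_k=\big((k+\tfrac12)\pi\big)^2, \qquad k\in\bb N.
\end{equation*}
Each $H_k$ is smooth and satisfies $H_k(0)=H_k'(1)=0$, so it is an admissible test function. The family $\{H_k\}_{k\in\bb N}$ is an orthonormal basis of $L^2[0,1]$; this is the standard completeness statement for a regular self-adjoint Sturm--Liouville problem, which I would cite or verify via the odd-reflection argument about $u=0$ and even reflection about $u=1$, reducing it to the classical Fourier sine basis on $[-2,2]$.

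Plugging $H=H_k$ into the identity gives, for $a_k(t):=\<w_t,H_k\>$,
\begin{equation*}
a_k(t)=-\lambda_k\int_0^t a_k(s)\,ds.
\end{equation*}
Since $w\in L^2([0,T]\times[0,1])$, Fubini and Cauchy--Schwarz give $a_k\in L^2[0,T]\subset L^1[0,T]$. The right-hand side is then absolutely continuous in $t$, so $a_k$ is absolutely continuous with $a_k(0)=0$ and $a_k'=-\lambda_k a_k$ almost everywhere. Gronwall's inequality, or uniqueness for this linear ODE in its integral form, yields $a_k\equiv 0$ on $[0,T]$ for every $k$.

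To conclude, for each $t$ the integral identity holds pointwise in $t$, so $\<w_t,H_k\>=0$ for all $k$. By completeness of $\{H_k\}$, this gives $w_t=0$ in $L^2[0,1]$ for almost every $t$; for a general $t$, the statement is at least that the distribution $w_t$ annihilates a dense family. Hence $w=0$ in $L^2([0,T]\times[0,1])$, which is the sense in which weak solutions are identified.

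The only step requiring care is the completeness of the mixed Dirichlet--Neumann eigenbasis and the fact that the eigenfunctions lie in the test class; the rest is a routine ODE argument. This is the same strategy that would handle Proposition~\ref{prop:uniqueness_2.9}, where the nonlocal term couples the modes through $\int_0^1 w$. In the present case with $\kappa=0$, no such coupling arises and the modes decouple completely.
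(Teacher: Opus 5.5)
Your proof is correct and follows the paper's route: you test the difference against the mixed Dirichlet--Neumann eigenfunctions $\Psi_k$ of \eqref{Liouville} and show that every coefficient $\<\xi_t,\Psi_k\>$ vanishes. The only difference is packaging: the paper sums the modes into the energy functional $\mc E(t)=\sum_{k\geq 0}\<\xi_t,\Psi_k\>^2/(2\lambda_k)$ and shows $\mc E'\leq 0$ (a device that earns its keep in the coupled non-local case of Proposition~\ref{prop:uniqueness_2.9}), whereas here the modes decouple, so your scalar Gronwall argument is slightly more direct and avoids differentiating the series term by term.
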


The proofs of Propositions~\ref{prop:uniqueness_2.9} and \ref{prop:unique_4.3} are shown in Subsection~\ref{sub:uniqueness}, in which we expand the difference of two solutions in the eigenfunctions $\eqref{Psi_k}$ of the Sturm-Liouville problem \eqref{Liouville} and show that every coefficient vanishes.

The motivation for integral formulations appearing in the Definitions \ref{def:2.4} and \ref{def:2.5} is the usual one: from two integrations by parts, a strong solution is indeed a weak solution, as one can check. Existence of weak solutions of \eqref{weak_solution_non_local} and \eqref{weak_solution_Dirichlet} follows from the proof of the next result.
 \begin{theorem}[Hydrodynamic limit of IRW]\label{thm:hydrolimit_rws}    
	Let $\mu_N$ be\break the probability measure on $\Omega_N$ defined in \eqref{mu_N}, where $\gamma$ is a continuous profile. Consider IRW $\{\eta_t:t\in [0,T]\}$ with generator given by \eqref{generator_rw} starting from the measure $\mu_N$. Then, for any $t\in [0,T]$, for every $\delta>0$ and every $H\in C[0,1]$, it holds 
	\begin{equation*}
		\lim_{N\to\infty}
		\bb P_{\mu_N}^{\theta, N} \Big\{\eta_{\,\bigcdot} : \, \Big\vert \frac{1}{N} \sum_{x=1}^N
		H(\pfrac{x}{N})\, \eta_t(x) - \int_0^1 H(u)\, \rho(t,u) du \Big\vert
		> \delta \Big\} \;=\; 0\,,
	\end{equation*}
	where $\rho(t,u)$ is:
	\begin{itemize}
		\item if $\theta\in [0,1)$, the unique strong (thus also weak) solution to the heat equation  \eqref{eq_1}.\smallskip
		
		\item if $\theta =  1$, the unique weak solution to the heat equation   \eqref{weak_solution_non_local}.\smallskip
		
		\item if $\theta\in (1,\infty)$, the unique weak solution to the heat equation  \eqref{weak_solution_Dirichlet}.
	\end{itemize}
\end{theorem}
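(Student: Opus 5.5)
Since the walks are independent and the initial measure $\mu_N$ is a product of Poisson measures, the configuration at every later time is again a product of Poisson measures. Its intensities are $\lambda^N_t(x)=\bb E_{\mu_N}[\eta_t(x)]$, and these solve the discrete linear system $\partial_t \lambda^N_t = N^2 \Delta_N \lambda^N_t$, where $\Delta_N$ is the generator of a single random walk on $\{0,\dots,N\}$ with the reservoir rate $\alpha N^{-\theta}$ out of site $0$. So I will not use the martingale/tightness route for Theorem~\ref{thm:hydrolimit_rws}. Instead I reduce it to a first and second moment computation:
\begin{equation*}
\frac1N\sum_{x=1}^N H(\pfrac xN)\,\eta_t(x)
\end{equation*}
has mean $\frac1N\sum_x H(\pfrac xN)\lambda^N_t(x)$. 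Its variance is $\frac1{N^2}\sum_x H(\pfrac xN)^2\lambda_t^N(x)$, because the coordinates are independent Poisson. By Chebyshev it therefore suffices to show:
\begin{itemize}
\item[(a)] $\sup_{x\ge1}\lambda^N_t(x)$ is bounded uniformly in $N$;
\item[(b)] $\frac1N\sum_x H(\pfrac xN)\lambda^N_t(x)\to\int_0^1 H\rho_t$ for every $H\in C[0,1]$.
\end{itemize}

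\emph{Step (a).} The measure $\mu_N$ is stochastically dominated by $\nu_\lambda$ with $\lambda=\|\gamma\|_\infty$. Since $\nu_\lambda$ is reversible by Proposition~\ref{prop:reversible_RW}, and IRW is attractive (monotone coupling of independent particles), we get $\lambda^N_t(x)\le \|\gamma\|_\infty$ for $x\ge1$. The mass at the reservoir satisfies $\lambda^N_t(0)\le N^\theta\|\gamma\|_\infty/\alpha$.

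\emph{Step (b).} I will use Theorem~\ref{thm:propagation_local}. It identifies the pointwise limit $\lambda^N_t(\lfloor uN\rfloor)\to\rho(t,u)$ for every $u\in(0,1)$, where $\rho$ is the strong solution of \eqref{eq_1}, \eqref{eq_2} or \eqref{eq_3}; the paper says this is extracted via Proposition~\ref{prop:RW}. Dominated convergence, using (a), then gives (b) with $\rho$ equal to that strong solution. In fact pointwise convergence of the means is exactly what the local equilibrium proof delivers, and Poisson convergence in distribution plus the uniform bound (a) (uniform integrability) gives convergence of the means directly.

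It remains to identify the strong solutions with the weak solutions named in the statement.
\begin{itemize}
\item For $\theta<1$ this is immediate.
\item For $\theta=1$, let $\rho$ solve \eqref{eq_2} and set $M:=\int_0^1\gamma\,du+\gamma(0)/\alpha$. The heuristic behind this choice is that total mass is conserved: the reservoir carries $N\gamma(0)/\alpha$ particles, which is mass $\gamma(0)/\alpha$ at scale $1/N$. I claim $\rho(t,0)=\alpha(M-\int_0^1\rho_t)$. Differentiate $g(t)=\rho(t,0)/\alpha+\int_0^1\rho_t$. This gives $\partial_t\rho(t,0)/\alpha+\partial_u\rho(t,1)-\partial_u\rho(t,0)=\partial^2_{uu}\rho(t,0)/\alpha-\partial_u\rho(t,0)=0$, using the Wentzell condition and the right Neumann condition. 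Hence $g\equiv g(0)=M$. Integrating by parts twice against $H$ with $H(0)=H'(1)=0$ gives \eqref{integral_weak_solution_non_local}. This is the content of Remark~\ref{remark:2.8}.
\item For $\theta>1$, the condition $\partial^2_{uu}\rho(t,0)=0$ together with the equation means $\partial_t\rho(t,0)=0$, so $\rho(t,0)=\gamma(0)$. The same integration by parts gives \eqref{integral_weak_solution_Dirichlet_homogeneous}.
\end{itemize}
Uniqueness then follows from Propositions~\ref{prop:uniqueness_2.9} and~\ref{prop:unique_4.3}.

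\emph{Main obstacle.} The delicate point is justifying the identification at the boundary. It needs enough regularity of the strong solution up to $t\to0$ and $u\to0$ to differentiate $g$ and to pass to $t=0$ in $\rho(t,0)$. The difficulty is that $\gamma$ is only continuous, so I would work on $[\varepsilon,t]$ and then let $\varepsilon\to0$ using continuity of $\rho$ on $[0,T]\times[0,1]$. Equivalently, the continuity of $t\mapsto\rho(t,0)$ at $0$ must be checked from the semigroup/eigenfunction representation of the Wentzell problem.
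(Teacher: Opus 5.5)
Your proof is correct. It reaches the limit through Theorem~\ref{thm:propagation_local} exactly as the paper does, but for $\theta\ge 1$ it identifies that limit with the weak solutions of \eqref{weak_solution_non_local} and \eqref{weak_solution_Dirichlet} by a genuinely different route.

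\textbf{Where the routes agree.} Both arguments obtain convergence to $\rho_t=P^\theta_t\gamma$ from Theorem~\ref{thm:propagation_local}. Your two-moment Chebyshev bound, which exploits the product-Poisson law at time $t$, is a self-contained substitute for the appeal to Kipnis--Landim behind Corollary~\ref{cor:local}.

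\textbf{The paper's identification step.} The paper stays microscopic. It writes Dynkin's martingale \eqref{Martingal} for test functions with $H(0)=H'(1)=0$ and kills it through the quadratic variation and attractiveness. It then handles the reservoir term by exact particle conservation, $\eta_s(0)=\sum_{x=0}^N\eta_0(x)-\sum_{x=1}^N\eta_s(x)$, combined with \eqref{eq:massa_total} and \eqref{eq:massa_bulk}. Uniqueness closes the argument.

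\textbf{Your identification step.} You instead make Remark~\ref{remark:2.8} rigorous at the PDE level, using the conserved quantity $\rho_t(0)/\alpha+\int_0^1\rho_t$ and two integrations by parts. The regularity you flag as the main obstacle is supplied by facts the paper already uses in proving Theorem~\ref{thm:propagation_local}:
\begin{itemize}
\item $P^\theta_t$ is a Feller semigroup, so $t\mapsto\rho_t$ is continuous into $(C[0,1],\Vert\cdot\Vert_\infty)$ on $[0,T]$. This gives $\rho_t(0)\to\gamma(0)$.
\item $P^\theta_t\gamma\in\mf D(\bb L_\theta)$ for $t>0$, so $\partial_t\rho_t=\rho_t''$ holds in sup norm for $t>0$, including at $u=0$. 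This justifies differentiating under the integral, and gives $\partial_t\rho_t(0)=\rho_t''(0)$.
\end{itemize}

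\textbf{What each route buys.} Yours is shorter and needs only fixed-time convergence. It also yields the Wentzell/non-local-Dirichlet correspondence as a standalone PDE fact. The paper's route needs no boundary regularity of $\rho$. It exhibits the microscopic origin of the non-local condition: reservoir content equals conserved total mass minus bulk mass. It is also the template reused for SEP, where no explicit semigroup or strong solution is available.

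\textbf{A minor slip.} The means $\lambda^N_t$ solve the forward equation, with the adjoint of the single-particle generator, not the backward one. It is $E\big[\gamma\big(X^{x,1}_t/N\big)\big]$ that solves the backward equation, and this coincides with $\lambda^N_t(x)$ only for $x\ge1$. You never use this equation, so nothing breaks.
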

We note that the hypothesis on the continuity of $\gamma$ is necessary in the theorem above because its proof partially relies on Theorem~\ref{thm:propagation_local}.
\begin{remark}\label{remark:2.8}
	The propagation of local equilibrium implies the hydrodynamic limit, see \cite[Chapter 3, Proposition~0.4]{kl}. Therefore,  the case $\theta\in[0,1)$ above is simply a corollary of Theorem~\ref{thm:propagation_local}. More relevant  are the cases $\theta=1$ and $\theta\in (1,\infty)$, which leads us to deduce two surprising facts about PDEs. 
	First,  the  solution to the heat equation with Wentzell boundary condition  \eqref{eq_2}  coincides with the solution to the heat equation with non-local Dirichlet boundary condition~\eqref{weak_solution_non_local}. Second, 
	the solution to the heat equation  with homogeneous Wentzell boundary condition \eqref{eq_3}  coincides with the solution to the  heat equation with Dirichlet boundary condition \eqref{weak_solution_Dirichlet}. This correspondence can be clarified by the following argument.
    
    The Dirichlet  boundary condition~\eqref{weak_solution_non_local} is given by
	\[\rho(t,0) \;=\; \alpha\Big(M - \int_0^1 \rho(t,u) du\Big).\]
Formally differentiating the above equation in time gives us
	\[\p_t\rho(t,0) \;=\; -\alpha \int_0^1 \p_t\rho(t,u) du\,.\]
	Since $\p_t \rho(t,u) = \p_{uu}^2 \rho(t,u)$ and $\p_u \rho(t,1)=0$, this leads to
	\begin{align*}
		\p_{uu}^2\rho(t,0) & = -\alpha \int_0^1 \p_{uu}^2\rho(t,u) du =-\alpha \Big(\p_u \rho(t,1)-\p_u \rho(t,0)\Big)= \alpha \p_u \rho(t,0)\,,
	\end{align*}
	which is the Wentzell boundary condition given in \eqref{eq_2}.   
	An analogous heuristic argument can be made to deduce that the homogeneous Wentzell boundary condition appearing in \eqref{eq_3}  agrees with the non-homogeneous Dirichlet boundary condition given in \eqref{weak_solution_Dirichlet}.  
\end{remark}

\begin{remark}\label{remark:2.9}
	In view of the previous remark, which establishes a correspondence between the solution of the heat equation with the Wentzell boundary condition  \eqref{eq_2} and the solution of the heat equation with the non-local Dirichlet boundary condition  \eqref{weak_solution_non_local}, 
	the reader may wonder why the  constant $M$ appears in the PDE \eqref{weak_solution_non_local}, but it is not present in the PDE \eqref{eq_2}.  Actually, the constant $M$  is present in \eqref{eq_2} through its initial condition $\gamma$. 
	Observing that 
	\begin{equation*}
		\frac{\rho_t(0)}{\alpha} + \int_0^1 \rho_t(u)du
	\end{equation*}
	is a time conserved quantity  for the parabolic equation \eqref{eq_2}, we define $M$ by
	\begin{equation*}
		M \;=\; \frac{\gamma(0)}{\alpha} + \int_0^1 \gamma(u)du\,,
	\end{equation*}
	which can be interpreted as the total mass of the system (reservoir plus bulk), 
	playing the role of the integration constant which appears when integrating in time the Wentzell boundary condition   \eqref{eq_2} in order to arrive at the non-local Dirichlet boundary condition \eqref{weak_solution_non_local}.
\end{remark}

\begin{remark}\label{rmk:model}
	The previous remarks allow us to understand the PDE \eqref{weak_solution_non_local} in terms of a simple physical  model of diffusion (and therefore the equivalent PDE \eqref{eq_2} as well).
	Consider a diffusion system in the finite  one-dimensional interval $[0,1]$, where $\rho(t,u)$ represents its density. Hence, it satisfies the heat equation $\p_t \rho = \p_{uu}^2 \rho$. Assume that the right boundary is isolated, so that it satisfies the  Neumann boundary condition $\p_u\rho(t,1)=0$ for all $t\in (0,T]$. 
    \begin{figure}[!htb]
		\begin{tikzpicture}[scale=1]
			\fill[fill = lightgray]
			(-1,0) rectangle (0,2);
			\draw[thick] (-1,0) -- (-1,2.5);
			\fill[fill = lightgray] (0,0) to  (0,2) to[out=20,in=180] (2,3) to[out=0,in=180] (4,1.5)
			to (4,0) to (0,0);
			\draw[thick,->] (-1.5,0) -- (5,0) node[below] {$u$};
			\draw[thick,->] (0,-0.75) -- (0,3);
			\draw (2.2, 3.3) node[right]{$\rho(t,u)$};
			\draw[thick,<->] (-0.95,-0.2) -- (-0.05,-0.2)   node[midway, below] {\small $\dfrac{1}{\alpha}$};
			\draw (4,0) node[below]{$1$};
			\draw (0,0) node[anchor = north west]{$0$};
			\draw[very thick] (0,2) to[out=20,in=180] (2,3) to[out=0,in=180] (4,1.5);            
			\fill (0,2) circle (0.07);
			\node[right] at (0,1.9) {$\rho(t,0)$};
			\draw [thick, white] (4,0) -- (4,2);
			\draw [thick] (4,0) -- (4,2);
		\end{tikzpicture}
		\caption{Model of diffusion for the parabolic PDE \eqref{weak_solution_non_local}. The segment of length $1/\alpha$ represents the finite reservoir in contact with the left boundary of the interval $[0,1]$.}
		\label{fig:fig3}
	\end{figure}
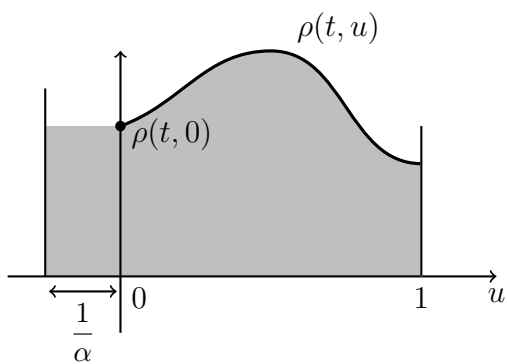
	
	Assume that the left boundary of the interval $[0,1]$ is in contact with a finite reservoir of length $1/\alpha$, see Figure~\ref{fig:fig3},
	where the diffusion coefficient inside the finite reservoir is infinite, so  the reservoir immediately achieves  the equilibrium state, causing the diffusion quantity to be constant inside it. 
	Assume also that the finite reservoir is in contact with the left boundary of the interval $[0,1]$, implying that  the reservoir's height is always equal to $\rho(t,0)$, and that the finite reservoir is isolated from the exterior world. Denote by $M$ the total mass (diffusion quantity) of the system, which is  time-conserved and equal to the sum of the quantity in the reservoir and in the interval. Hence
	\begin{equation*}
		M\;=\; \frac{1}{\alpha} \rho(t,0) + \int_0^1\rho(t,u)du\,,
	\end{equation*}
	which is the non-local Dirichlet boundary condition in \eqref{weak_solution_non_local}.
\end{remark}

\begin{remark}
	The homogeneous Wentzell boundary condition $\p^2_{uu}\rho(t,0)=0$ appearing in \eqref{eq_3} is also the condition that defines the domain of the absorbed Brownian motion, where the process is stopped when it reaches zero. 
       This agrees with the idea of a system in contact with a reservoir that behaves as a sink: once the diffusion quantity enters the reservoir, it stays there forever. On the other hand, the homogeneous Dirichlet boundary condition $\rho(t,0)=0$  is also the condition that defines the domain of the killed Brownian motion (in \eqref{weak_solution_Dirichlet} the Dirichlet boundary condition is not homogeneous, but it is intimately related to it). In the killed Brownian motion, once the particle hits zero, the process ceases to exist (to be more precise, zero is not part of the state-space of the killed BM, that is why the process must cease to exist once it goes to zero). This agrees with the interpretation of a boundary PDE  that kills mass through it. In the same way that killed and absorbed Brownian motions are essentially the same process, equations \eqref{eq_3} and \eqref{weak_solution_Dirichlet} are also essentially the same PDE.
\end{remark}

\subsection{SEP in contact with a  finite reservoir}
As we shall see, the SEP leads to a rather more complicated non-linear boundary condition. In what follows, we often employ notation already used for IRW, but it should yield no ambiguity.

Let  $\theta\geq 0$ and $\alpha>0$.
The SEP in contact with a slow finite reservoir is a Markov process with state-space $\Omega_N=\bb N\times \{0,1\}^N$, whose generator $\msf{L}_N$ acts on functions $f:\Omega_N \to \bb{R}$ as
\begin{equation}\label{generator}
	\msf{L}_N f(\eta) \;=\; \sum_{\substack{x,y\,\in\, \{0,\ldots,N\} \\ |x - y| \,=\, 1}} \xi_{x,y}^N \big[f(\eta^{x,y}) - f(\eta)\big]\,,
\end{equation}
where the jump rates $\xi_{x,y}^N$ are given by
\begin{equation}\label{eq:rates}
	\xi_{x,y}^N \;=\; N^2\times
	\begin{cases}
		\dfrac{\alpha \eta(0)}{N^\theta}\big(1-\eta(1)\big), & \text{if } x = 0,y = 1, \vspace{2pt}\\
		\eta(1), & \text{if } x = 1, y = 0, \vspace{2pt}\\
		\eta(x)\big(1-\eta(y)\big), & \text{if } 1\leq x,y \leq N, |x - y| =1, \vspace{2pt}\\
		0, & \text{ otherwise},
	\end{cases}
\end{equation}
 and the configuration $\eta^{x,y}$ is given by moving a particle from the site $x$ to the site $y$ if possible. That is, denoting
\begin{equation*}
	\Xi(\eta)(z) \;=\;
	\begin{cases}
        \eta(x)-1, & \text{if } z = x, \\
		\eta(y)+1, & \text{if } z = y, \\
		\eta(z), & \text{otherwise,}
	\end{cases}
\end{equation*}
we define
\begin{equation}\label{eq:eta_xy_ex}
	\eta^{x,y} \;=\;
	\begin{cases}
        \Xi(\eta), & \text{if } \Xi(\eta) \in \Omega_N, \\
		\eta, & \text{otherwise.}
	\end{cases}
\end{equation}
Note that a jump from $x$ to $y\in\{1, \ldots, N\}$ may occur only if the site $x$ is occupied and the site $y$ is empty. Observe also the presence of the diffusive scaling factor $N^2$ in \eqref{eq:rates}.
In Figure~\ref{fig:fig1} we illustrate the jump rates.
\begin{figure}[!htb]
	\centering
	\begin{tikzpicture}[scale=1.3]
		\centerarc[thick,<-](3.5,1.3)(0:170:0.45);
		\centerarc[thick,->](3.5,-0.3)(-10:-170:0.45);
		\centerarc[thick,->](7.5,-0.3)(-10:-170:0.45);
		\centerarc[thick,<-](7.5,0.3)(10:170:0.45);
		\centerarc[thick,->](11.5,-0.3)(-10:-170:0.45);
		\centerarc[thick,<-](11.5,0.3)(10:170:0.45);
		\draw (3,0) -- (12,0);
		\shade[ball color=black](3,0) circle (0.25);
		\shade[ball color=black](3,0.5) circle (0.25);
		\shade[ball color=black](3,1) circle (0.25);
		\shade[ball color=black](5,0) circle (0.25);
		\shade[ball color=black](8,0) circle (0.25);
		\shade[ball color=black](10,0) circle (0.25);
		\shade[ball color=black](6,0) circle (0.25);
		\shade[ball color=black](12,0) circle (0.25);
		
		\filldraw[fill=white, draw=black]
		(4,0) circle (.25)
		(7,0) circle (.25)
		(9,0) circle (.25)
		(11,0) circle (.25);
		
		\draw (2.7,-0.1) node[anchor=north] {\small $0$}
		(3.7,-0.1) node[anchor=north] {\small $1$}
		(4.7,-0.1) node[anchor=north] {\small $2$}
		(5.7,-0.1) node[anchor=north] {}
		(6.7,-0.15) node[anchor=north] {\small $x$}
		(7.6,-0.1) node[anchor=north] {\small $x\!+\!1$}
		(11.7,-0.1) node[anchor=north] {\small $N$};
		\draw (7.5,0.8) node[anchor=south]{$\eta(x)\big(1-\eta(x+1)\big)$};
		\draw (7.5,-0.8) node[anchor=north]{$\eta(x+1)\big(1-\eta(x)\big)$};
		\draw (3.5,-0.8) node[anchor=north]{$\eta(1)$};
		\draw (3.5,2.6) node[anchor=north]{$\displaystyle\frac{\alpha \eta(0)}{N^\theta}\big(1-\eta(1)\big)$};
		\draw (11.5,0.8) node[anchor=south]{$\eta(N-1)\big(1-\eta(N)\big)$};
		\draw (11.5,-0.8) node[anchor=north]{$\eta(N)\big(1-\eta(N-1)\big)$};
	\end{tikzpicture}
	\bigskip
	\caption{Illustration of jump rates (without the diffusive scaling parameter $N^2$) for the SEP in contact with a finite reservoir.}\label{fig:fig1}
\end{figure}

Our first result is concerned with the invariant (actually reversible) measure for this process, which consists of a product measure of Bernoulli measures with constant parameter at sites $1,\ldots, N$ and a Poisson measure at site zero, whose parameters are properly related.
\begin{proposition}\label{prop:reversible}
Let $p \in (0,1)$.
Then, the product measure $\nu_p$ on the state-space $\Omega_N$ defined by
\begin{equation}\label{nu_p}
	\nu_p \;=\; \mathrm{Poisson}\Big(\frac{N^\theta}{\alpha}\cdot \frac{p}{1-p}\Big)\otimes \bigotimes_{i=1}^N \mathrm{Bernoulli}(p)
\end{equation}
is reversible for the Markov process $\{\eta_t: t \geq 0\}$.
\end{proposition}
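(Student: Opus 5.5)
We verify the detailed balance conditions edge by edge. A product measure is reversible for a generator built from nearest-neighbour particle moves as soon as, for every bond $\{x,y\}$ and every configuration $\eta$ from which a particle can move from $x$ to $y$,
\begin{equation*}
\nu_p(\eta)\,\xi^N_{x,y}(\eta) \;=\; \nu_p(\eta^{x,y})\,\xi^N_{y,x}(\eta^{x,y})\,.
\end{equation*}
Indeed, summing this identity against $f(\eta)g(\eta^{x,y})$ over all such $\eta$ and all bonds gives $E_{\nu_p}[g\,\msf L_N f]=E_{\nu_p}[f\,\msf L_N g]$. The map $\eta\mapsto\eta^{x,y}$ is a bijection from configurations admitting the move to configurations admitting the reverse move. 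Since the product structure only involves the two sites of the bond, it suffices to compare the marginal weights at those two sites.

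\emph{Bulk bonds $1\le x,y\le N$, $|x-y|=1$.} Take $\eta(x)=1$ and $\eta(y)=0$. Both the rate $N^2\eta(x)(1-\eta(y))$ and the reverse rate in $\eta^{x,y}$ equal $N^2$. The Bernoulli$(p)$ weights of the pair $(1,0)$ and of the pair $(0,1)$ are both $p(1-p)$, so the identity holds trivially.

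\emph{Boundary bond $\{0,1\}$.} Take $\eta(0)=k\ge1$ and $\eta(1)=0$, and let $\beta=\frac{N^\theta}{\alpha}\frac{p}{1-p}$ be the Poisson parameter at site $0$. The forward move $0\to1$ happens at rate $N^2\alpha k N^{-\theta}$, and the local weight of $\eta$ is $e^{-\beta}\frac{\beta^k}{k!}(1-p)$. After the move the configuration has $\eta(0)=k-1$ and $\eta(1)=1$. The reverse move $1\to0$ happens at rate $N^2$, and the local weight is $e^{-\beta}\frac{\beta^{k-1}}{(k-1)!}\,p$. Detailed balance therefore reduces to
\begin{equation*}
\frac{\beta^k}{k!}(1-p)\,\frac{\alpha k}{N^\theta} \;=\; \frac{\beta^{k-1}}{(k-1)!}\,p\,,
\end{equation*}
that is, $\beta\,\alpha N^{-\theta}(1-p)=p$. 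This is exactly the defining choice of $\beta$, so the identity holds for every $k\ge1$.

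It remains to check that no moves have been overlooked. Moves out of an empty site, or into an occupied site in $\{1,\dots,N\}$, have zero rate, and in these cases $\eta^{x,y}=\eta$ contributes nothing. The rate from $1$ to $0$ is $N^2\eta(1)$, with no exclusion constraint at the reservoir, and this matches the reverse pairing used above. Jumps are absent between sites at distance greater than one, and at the right end. There is no real obstacle here: the only step that needs care is the boundary bond, where the factor $k$ from the reservoir rate has to cancel against $k!/(k-1)!$ from the Poisson weights.
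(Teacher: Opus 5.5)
Your proof is correct and follows essentially the paper's route. The paper also reduces to the bond $\{0,1\}$, since the bulk is standard SEP with a Bernoulli product measure, and checks detailed balance there through the ratio $\nu_p(\eta)/\nu_p(\eta^{0,1}) = \frac{N^\theta}{\alpha\,\eta(0)}$, which is exactly your cancellation of $k$ against $k!/(k-1)!$. The only difference is presentational: the paper does the change of variables $\xi=\eta^{0,1}$ inside $\int g\,\msf{L}_N f\,d\nu_p$, whereas you state pointwise detailed balance first and then sum.
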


\subsubsection{Hydrodynamic limit} We start by  stating the three hydrodynamic equations obtained in the hydrodynamic limit of the SEP in contact with a finite reservoir, for different  ranges of the parameter $\theta$.
\begin{definition}\label{def:weak_Neumann}
Assume that $\gamma:[0,1]\to [0,1]$ is a measurable profile, not necessarily continuous. We say that a measurable function $\rho:[0,T]\times [0,1] \to [0,1]$ is a weak solution to the heat equation with Neumann boundary conditions
\begin{equation}\label{heat_Neumann}
	\begin{cases}   \partial_t \rho(t,u) = \p_{uu}^2 \rho(t,u),  & \text{for } (t,u) \in (0,T]\times (0,1),   \vspace{3pt}\\
		\p_u\rho(t,0) = \p_u\rho(t,1) = 0, &  \text{for } t \in (0,T],  \vspace{3pt}\\
		\rho(0,u) = \gamma(u), &  \text{for } u\in [0,1] \vspace{3pt}
	\end{cases}
\end{equation}
if, for any function $H\in C^{2}[0,1]$ such that $H'(0)= H'(1)=0$, the equation
\begin{equation*}
	\big\<\rho_t, H\big\> - \big\<\gamma, H\big\>  \;=\;  \int_0^t \big\< \rho_s,  H'' \big\>\, ds
\end{equation*}
holds for any time $t\in[0,T]$.
\end{definition}
\begin{proposition}\label{prop:uniqueness_5.7} 
There exists at most one weak solution to \eqref{heat_Neumann}.
\end{proposition}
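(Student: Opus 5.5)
By linearity, it suffices to show that if $\rho$ is a weak solution with zero initial datum, then $\rho\equiv 0$. So let $\rho^1,\rho^2$ be two weak solutions of \eqref{heat_Neumann} with the same $\gamma$, and set $w=\rho^1-\rho^2$. Then $w$ is bounded and measurable (it takes values in $[-1,1]$), hence $w\in L^2([0,T]\times[0,1])$. Moreover, for every $H\in C^2[0,1]$ with $H'(0)=H'(1)=0$,
\begin{equation*}
\langle w_t,H\rangle=\int_0^t\langle w_s,H''\rangle\,ds\quad\text{for all } t\in[0,T].
\end{equation*}

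Following the strategy announced for Propositions~\ref{prop:uniqueness_2.9} and~\ref{prop:unique_4.3}, I would expand $w$ in the eigenfunctions of the Neumann Sturm--Liouville problem $-\Psi''=\lambda\Psi$, $\Psi'(0)=\Psi'(1)=0$. These are $\Psi_0\equiv1$ and $\Psi_k(u)=\sqrt2\cos(k\pi u)$ for $k\ge1$, with eigenvalues $\lambda_k=k^2\pi^2$, and they form an orthonormal basis of $L^2[0,1]$. Each $\Psi_k$ is smooth and satisfies the Neumann conditions, so it is an admissible test function. Set $a_k(t)=\langle w_t,\Psi_k\rangle$. Taking $H=\Psi_k$ gives
\begin{equation*}
a_k(t)=-\lambda_k\int_0^t a_k(s)\,ds.
\end{equation*}
Since $w$ is bounded, $a_k$ is bounded and measurable. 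The right-hand side is then continuous in $t$, and hence $a_k$ is continuous. A bootstrap then shows $a_k\in C^1$ with $a_k'=-\lambda_k a_k$ and $a_k(0)=0$, so $a_k\equiv0$. Alternatively, Gronwall's inequality applied to $|a_k(t)|\le\lambda_k\int_0^t|a_k(s)|\,ds$ gives the same conclusion directly.

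It follows that, for every $t$, all Fourier coefficients of $w_t$ in a complete orthonormal system vanish, so $w_t=0$ in $L^2[0,1]$. Hence $\rho^1=\rho^2$ almost everywhere in $[0,T]\times[0,1]$.

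The only point needing care is that the weak formulation holds for every $t$, not merely for almost every $t$. This is what allows the argument to be carried out pointwise in time. If one only had the identity for almost every $t$, the same argument would still give $a_k=0$ for almost every $t$, and Fubini's theorem would again yield $w=0$ almost everywhere. I therefore see no real obstacle: the Neumann case is the simplest of the three, because the test-function class is exactly the domain of the eigenfunctions and no boundary terms appear.
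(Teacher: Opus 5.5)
Your proof is correct and takes essentially the paper's route: the paper also expands $\rho^1-\rho^2$ in the Neumann eigenbasis $\Phi_0=1$, $\Phi_k=\sqrt2\cos(\pi k u)$ and shows every coefficient vanishes, but it does so by citing the energy-functional argument $\sum_k \langle\xi_t,\Phi_k\rangle^2/(2\lambda_k)$ of Proposition~\ref{prop:unique_4.3}. Your mode-by-mode Gronwall argument is that same idea unpacked, and it has the small advantage of treating the constant mode directly: there $\lambda_0=0$, so the weight $1/\lambda_0$ in that energy is undefined, whereas $\langle w_t,1\rangle=0$ follows immediately from the weak formulation.
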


The proof of this proposition, as well as those of the two uniqueness propositions stated below, is postponed to Subsection~\ref{sub:uniqueness_SEP}.

Denote by $C^{0,1}_c([0,T]\times [0,1])$ the space of functions $G:[0,T]\times [0,1]\to \bb R$ that are continuous in the first variable and $C^1$ in the second variable, with compact support in $[0,T]\times (0,1)$.
Following the standard notation of \cite{E}, let $L^2([0,T]; \mc H^1(0,1))$ denote the Sobolev space of functions $\psi\in L^2([0,T]\times [0,1])$ for which there exists a function in $L^2([0,T]\times [0,1])$, denoted by $\p_u \psi$, such that
\begin{equation*}
\int_0^T \int_0^1 (\p_u G)(s,u) \psi(s,u) du ds \;=\; - \int_0^T \int_0^1  G(s,u) (\p_u\psi)(s,u) du ds
\end{equation*}
for any $G\in C^{0,1}_c([0,T]\times [0,1])$. 

\begin{definition}\label{def:weak}
Fix a constant $M\in \bb R$ and let  $\gamma:[0,1]\to [0,1]$ be a measurable profile. We  say that a measurable function $\rho:[0,T]\times [0,1] \to [0,1]$ is a weak solution to
\begin{equation}\label{non_linear_Dirichlet}
	\begin{cases}   \partial_t \rho(t,u) = \p_{uu}^2 \rho(t,u),  & \text{for } (t,u) \in (0,T]\times (0,1),   \vspace{3pt}\\
		\dfrac{\rho(t,0)}{1-\rho(t,0)} = \displaystyle \alpha\bigg(M - \int_0^1 \rho(t,u)du\bigg),  &  \text{for } t \in (0,T],  \vspace{3pt}\\
		\p_u\rho(t,1) = 0, &  \text{for } t \in (0,T],  \vspace{3pt}\\
		\rho(0,u) = \gamma(u), &  \text{for } u\in [0,1] \vspace{3pt}
	\end{cases}
\end{equation}
 if  $\rho\in \Sobolev$
and, for any  $H\in C^{2}[0,1]$ such that $H(0)= H'(1)=0$, it holds
\begin{equation}\label{eq:cond_3}
	\begin{split}
		\big\<\rho_t, H\big\> - \big\<\gamma, H\big\>  = & \int_0^t \big\< \rho_s,  H'' \big\>\, ds\\
		& + \int_0^t \alpha \big(1-\rho_s(0)\big)\bigg(M- \int_0^1\rho_s(u)du\bigg)   H'(0)  \,ds
	\end{split}
\end{equation}
for any time $t\in[0,T]$.
\end{definition}
Note that  \eqref{non_linear_Dirichlet}, in contrast to \eqref{weak_solution_non_local}, has  a non-linear boundary condition, which can  be rewritten as $\rho_t(0) = \frac{\alpha(M - \int_0^1 \rho_t(u)du)}{1+\alpha(M - \int_0^1 \rho_t(u)du)}$  for any $t \in (0,T]$.
Moreover, in the integral equation \eqref{eq:cond_3} above,  the term $\rho_s(0)$ should be understood in the sense of the trace of a function in the Sobolev space $\Sobolev$, see \cite[Chapter~5]{E} for instance. 

\begin{proposition}\label{prop:unique_non_linear_Dirichlet}
There exists at most one weak solution to \eqref{non_linear_Dirichlet}.
\end{proposition}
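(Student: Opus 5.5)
Let $\rho^1,\rho^2$ be two weak solutions of \eqref{non_linear_Dirichlet} with the same $M$ and $\gamma$, and set $w=\rho^1-\rho^2$. I will not expand $w$ in the eigenfunctions of the Sturm--Liouville problem \eqref{Liouville}, as in the linear case. The boundary term in \eqref{eq:cond_3} is nonlinear, so the coefficients do not decouple. Instead I will run an energy estimate on a primitive of $w$, using the monotone structure of the nonlinearity.

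\textbf{Test-function equation.} For $0<\rho(0)<1$ and $m\ge0$, write $\Phi(\rho(0),m)=\alpha(1-\rho(0))(M-m)$. Subtracting the two integral identities \eqref{eq:cond_3} gives, for every $H\in C^2[0,1]$ with $H(0)=H'(1)=0$,
\[
\<w_t,H\>=\int_0^t\<w_s,H''\>ds+\int_0^t\big[\Phi(\rho^1_s(0),m^1_s)-\Phi(\rho^2_s(0),m^2_s)\big]H'(0)\,ds,
\]
where $m^i_s=\int_0^1\rho^i_s$. Since $\rho^i\in\Sobolev$, the traces $\rho^i_s(0)$ are defined for a.e.\ $s$.

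\textbf{Choice of test function.} Formally take $H(u)=\int_0^u\!\int_v^1 w_t(z)\,dz\,dv$, so that $H(0)=0$, $H'(1)=0$ and $H''=-w_t$. Since $\<w_t,H\>=\int_0^1 W_t(v)^2dv$ with $W_t(v)=\int_v^1w_t$, this is the $\mathcal H^{-1}$-type energy $E(t)=\|W_t\|_{L^2}^2$. To make this rigorous I will mollify $w$ in time, which is standard in uniqueness proofs for weak solutions (e.g.\ Filo/Kl\"{o}tzer style arguments in the particle-systems literature), and differentiate. The computation gives
\[
\tfrac12 E'(t)=-\|w_t\|^2+\big[\Phi(\rho^1_t(0),m^1_t)-\Phi(\rho^2_t(0),m^2_t)\big]\,W_t(0),
\]
where $W_t(0)=m^1_t-m^2_t$.

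\textbf{Sign of the boundary term (main step).} I need to show the boundary term is $\le0$, or at least bounded by $C E(t)$ plus a small multiple of $\|w\|^2$. The difficulty is that $\Phi$ depends both on the trace $\rho(0)$ and on the mass $m$. The clean route is to first prove that a weak solution actually satisfies the Dirichlet relation $\rho_t(0)=g(M-m_t)$, with $g(x)=\alpha x/(1+\alpha x)$. Then the boundary flux becomes
\[
\alpha(1-\rho_t(0))(M-m_t)=g(M-m_t)=:\psi(m_t),
\]
and $\psi$ is decreasing and Lipschitz in $m$. Hence $[\psi(m^1)-\psi(m^2)](m^1-m^2)\le0$, and $E'\le0$ with $E(0)=0$, which gives $w\equiv0$.

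Establishing that boundary identity from \eqref{eq:cond_3} alone is where I expect the real obstacle. I would obtain it by comparing \eqref{eq:cond_3} with test functions $H$ having $H(0)\neq0$. This is not available from the weak formulation directly, so if it fails, the fallback is to bound the boundary term crudely. Using $0\le\rho^i\le1$ and the Lipschitz property of $\Phi$ (assuming $M-m$ stays bounded),
\[
|\Phi^1-\Phi^2|\,|W(0)|\lesssim\big(|w_t(0)|+|W_t(0)|\big)|W_t(0)|.
\]
Next, bound $|W_t(0)|^2\le\|W_t\|^2+2\|W_t\|\|w_t\|$. The trace term $|w_t(0)|$ is not controlled by $\|w_t\|_{L^2}$; to handle it I would estimate it through $\|w_t\|\,\|\p_uw_t\|$, which requires additionally bounding $\int_0^t\|\p_uw_s\|^2ds$. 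This is available from the Sobolev hypothesis on each $\rho^i$, but only as a finite integral, not as a small quantity. This is why I would prefer the monotone route. Assuming the absorption into $-\|w\|^2$ goes through, Gronwall then yields $E\equiv0$.
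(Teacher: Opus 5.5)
\textbf{There is a genuine gap: the Dirichlet relation at $u=0$, on which your monotonicity argument rests, is never proved.}

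Your $\mathcal H^{-1}$-type energy is the right functional. With $W_t(v)=\int_v^1 w_t$, one has $\|W_t\|_{L^2}^2=\sum_{k\ge0}\langle w_t,\Psi_k\rangle^2/\lambda_k$, which is twice the paper's $\mathcal E(t)$. Once the boundary relation is available, your monotonicity computation is exactly the paper's. The flux difference equals $\alpha\Gamma_tA_t$, with $\Gamma_t=\big[(1+\alpha(M-m^1_t))(1+\alpha(M-m^2_t))\big]^{-1}>0$ and $A_t=-W_t(0)$, so $\tfrac12E'=-\|w_t\|^2-\alpha\Gamma_tA_t^2$.

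The gap is the step you flag yourself. You need $\rho_t(0)=\alpha(1-\rho_t(0))\big(M-\int_0^1\rho_t\big)$ for a.e.\ $t$, and neither of your two routes gets it.
\begin{itemize}
\item Test functions with $H(0)\neq0$ are outside the admissible class, so nothing can be extracted from them.
\item The fallback does not close. The energy only dissipates $\|w_t\|_{L^2}^2$, while the trace needs $|w_t(0)|^2\le\|w_t\|^2+2\|w_t\|\,\|\partial_uw_t\|$. After Young's inequality, the product $|w_t(0)|\,|W_t(0)|$ leaves a term of order $\|\partial_uw_t\|\,E(t)^{1/2}$, which is not bounded by $CE(t)$. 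Gronwall then only gives $E(t)^{1/2}\lesssim\int_0^t\|\partial_uw_s\|\,ds$, not $E\equiv0$.
\end{itemize}
``Assuming the absorption goes through'' is precisely the unproved point.

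The missing idea is Lemma~\ref{lemma:5.8} in the paper, which uses the Sobolev hypothesis differently. Since $\rho\in L^2([0,T];\mathcal H^1(0,1))$ and $H'(1)=0$, integrate by parts once: $\int_0^t\langle\rho_s,H''\rangle ds=-\int_0^t\langle\partial_u\rho_s,H'\rangle ds-\int_0^t\rho_s(0)H'(0)\,ds$. Then insert $H_n$, a mollification of $u\mapsto u\wedge\frac1n$ at scale $1/n^2$. It is admissible ($H_n(0)=H_n'(1)=0$), has $H_n'(0)=1$, and satisfies $H_n\to0$ and $H_n'\to0$ in $L^2[0,1]$. As $n\to\infty$ all bulk terms vanish, leaving $\int_0^t\rho_s(0)\,ds=\int_0^t\alpha(1-\rho_s(0))\big(M-\int_0^1\rho_s\big)ds$ for every $t$, hence the identity a.e. It also gives $\rho_s(0)<1$ and $M-\int_0^1\rho_s\ge0$ a.e. You need these so that $g$ is defined and increasing on the segment between $M-m^1_s$ and $M-m^2_s$.

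With this lemma inserted, your argument is complete. The paper runs the same energy estimate through the coefficients $\langle w_t,\Psi_k\rangle$, using only the fixed $C^2$ test functions $\Psi_k$. This avoids the time-mollified, time-dependent test function that you only sketch. The nonlinearity is harmless there, because the coefficients never need to decouple.
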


\begin{definition}\label{def:weak_homogeneous_Dirich}
Let $\gamma:[0,1]\to [0,1]$ be a measurable profile. We say that a measurable function $\rho:[0,T]\times [0,1] \to \bb R$ is a weak solution to the heat equation with homogeneous Dirichlet boundary condition 
\begin{equation}\label{heat_dirichlet}
	\begin{cases}   \partial_t \rho(t,u) = \p_{uu}^2 \rho(t,u),  & \text{for } (t,u) \in (0,T]\times (0,1),   \vspace{3pt}\\
		\rho(t,0) = 0, &  \text{for } t \in (0,T],  \vspace{3pt}\\
		\p_u\rho(t,1) = 0, &  \text{for } t \in (0,T],  \vspace{3pt}\\
		\rho(0,u) = \gamma(u), &  \text{for } u\in [0,1] \vspace{3pt}
	\end{cases}
\end{equation}
if, for any function $H\in C^{2}[0,1]$ such that $H(0)= H'(1)=0$, it holds 
\begin{equation*}
	\begin{split}
		\big\<\rho_t, H\big\> - \big\<\gamma, H\big\>  \;=\; & \int_0^t \big\< \rho_s,  H'' \big\>\, ds
	\end{split}
\end{equation*}
for any time $t\in[0,T]$.
\end{definition}
\begin{proposition}\label{prop:uniqueness_5.10}
There exists at most one weak solution to \eqref{heat_dirichlet}.
\end{proposition}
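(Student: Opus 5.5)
Since the problem is linear, we take two weak solutions $\rho^1,\rho^2$ of \eqref{heat_dirichlet} with the same initial profile $\gamma$ and work with their difference $w=\rho^1-\rho^2$. By linearity of the weak formulation in Definition~\ref{def:weak_homogeneous_Dirich}, $w$ satisfies
\begin{equation*}
\big\<w_t,H\big\> = \int_0^t \big\<w_s,H''\big\>\,ds
\end{equation*}
for every $t\in[0,T]$ and every $H\in C^2[0,1]$ with $H(0)=H'(1)=0$. Moreover $w$ is bounded, since both solutions take values in $[0,1]$ in the relevant setting; in any case $w_t\in L^2[0,1]$ for a.e.\ $t$, and the identity above holds for all $t$.

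Our strategy is to test against the eigenfunctions of the Sturm--Liouville problem $-\Psi''=\lambda\Psi$ with $\Psi(0)=\Psi'(1)=0$. These are
\begin{equation*}
\Psi_k(u)=\sqrt2\,\sin\big((k+\tfrac12)\pi u\big),\qquad \lambda_k=\big((k+\tfrac12)\pi\big)^2,\qquad k\in\bb N .
\end{equation*}
Each $\Psi_k$ is smooth and satisfies the boundary conditions, so it is an admissible test function. Setting $a_k(t)=\<w_t,\Psi_k\>$ and using $\Psi_k''=-\lambda_k\Psi_k$, we obtain the integral equation
\begin{equation*}
a_k(t)=-\lambda_k\int_0^t a_k(s)\,ds .
\end{equation*}
The map $s\mapsto a_k(s)$ is integrable, because $|a_k(s)|\le \|w_s\|_{L^2}$ and $w\in L^2([0,T]\times[0,1])$. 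The right-hand side is therefore absolutely continuous in $t$, hence so is $a_k$. It follows that $a_k'=-\lambda_k a_k$ a.e.\ with $a_k(0)=0$, so $a_k\equiv 0$. If one prefers to avoid differentiation, Gronwall applied to $|a_k(t)|\le\lambda_k\int_0^t|a_k|$ gives the same conclusion.

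It remains to conclude $w_t=0$. The family $\{\Psi_k\}$ is a complete orthonormal basis of $L^2[0,1]$, being the eigenfunctions of a regular self-adjoint Sturm--Liouville problem. Hence $\<w_t,\Psi_k\>=0$ for all $k$ forces $w_t=0$ in $L^2[0,1]$ for every $t$, which is uniqueness.

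The only point needing care is that the weak formulation holds for all $t$ and not merely almost every $t$, which is built into Definition~\ref{def:weak_homogeneous_Dirich}. Together with the completeness of the basis, this is the step on which the argument rests. Completeness can be cited as standard, or checked directly: odd-reflecting about $0$ and even-reflecting about $1$ identifies $\{\Psi_k\}$ with the odd-frequency sine functions on $[0,2]$ that are symmetric about $u=1$, a family that is complete in the corresponding subspace of $L^2[0,2]$. No trace or Sobolev regularity is needed here, in contrast with Proposition~\ref{prop:unique_non_linear_Dirichlet}, because the homogeneous Dirichlet condition makes the boundary term vanish.
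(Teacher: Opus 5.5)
Your proposal is correct and is essentially the paper's argument. The paper proves this by citing verbatim the proof of Proposition~\ref{prop:unique_4.3}: test the difference against the eigenfunctions $\Psi_k$ of \eqref{Liouville}, show every coefficient $\<w_t,\Psi_k\>$ vanishes, and conclude by completeness. The only difference is cosmetic: the paper bundles the coefficients into the energy $\mc E(t)=\sum_k \<\xi_t,\Psi_k\>^2/(2\lambda_k)$ and uses $\mc E(0)=0$, $\mc E'\le 0$, whereas you solve each decoupled equation $a_k(t)=-\lambda_k\int_0^t a_k(s)\,ds$ directly, which here is slightly cleaner since no termwise differentiation of a series is needed.
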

Existence of weak solutions to \eqref{heat_Neumann}, \eqref{non_linear_Dirichlet} and \eqref{heat_dirichlet} is a consequence of the next theorem.
Let $D([0,T]; \Omega_N)$ be the path space of
c\`adl\`ag trajectories taking values on $\Omega_N$. For a
measure $\mu_N$ on $\Omega_N$, denote by $\bb P_{\mu_N}^{\theta, N}$ the
probability measure on $D([0,T]; \Omega_N)$ induced by the
initial state $\mu_N$ and the Markov process $\{\eta_t : t\ge 0\}$ and denote by  $\bb E_{\mu_N}^{\theta, N}$  the expectation with respect to~$\bb P_{\mu_N}^{\theta, N}$.
\begin{theorem}[Hydrodynamic Limit]\label{thm:hydro_ex}
Let $\gamma:[0,1]\to [0,1]$ be a continuous profile. Assume that in the case $\theta=1$ it also holds $0<\gamma(0)<1$.  
Consider the slowly varying  measure
\begin{equation}\label{mu_N_ex}
	\mu_N \;=\; \mathrm{Poisson}\Big(\frac{N^\theta}{\alpha} \cdot \gamma(\pfrac{0}{N})\Big)\otimes \bigotimes_{x=1}^N\mathrm{Bernoulli}\big(\gamma(\pfrac{x}{N})\big)\,.
\end{equation}
Then, for any $t\in [0,T]$, for every $\delta>0$ and every $H\in C[0,1]$, it holds 
\begin{equation*}
	\lim_{N\to\infty}
	\bb P_{\mu_N}^{\theta, N} \Big\{\eta_{\,\bigcdot} : \, \Big\vert \frac{1}{N} \sum_{x=1}^N
	H(\pfrac{x}{N})\, \eta_t(x) - \int_0^1 H(u)\, \rho(t,u) du \Big\vert
	\;>\; \delta \Big\} \;=\; 0\,,
\end{equation*}
where $\rho(t,u)$ is:
\begin{itemize}
	\item if $\theta\in [0,1)$, the unique weak solution to the  heat equation  \eqref{heat_Neumann}.
	\item if $\theta =  1$, the unique weak solution to the heat equation  \eqref{non_linear_Dirichlet}.
	\item if $\theta\in (1,\infty)$, the unique weak solution to the heat equation  \eqref{heat_dirichlet}.
\end{itemize}
\end{theorem}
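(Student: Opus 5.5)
The plan follows the entropy-free route that attractiveness allows: prove tightness of the empirical measures, pass to the limit in Dynkin's martingale for a well-chosen class of test functions, and identify limit points through the uniqueness results Propositions~\ref{prop:uniqueness_5.7}, \ref{prop:unique_non_linear_Dirichlet} and \ref{prop:uniqueness_5.10}.

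\emph{Tightness and the martingale.} Let $\pi^N_t=\frac1N\sum_{x=1}^N\eta_t(x)\delta_{x/N}$, which has mass at most $1$. Tightness in $D([0,T];\mathcal M_+)$ follows from Aldous' criterion applied to $\langle\pi^N_t,H\rangle$ for a dense family of smooth $H$. For $H\in C^2[0,1]$ with $H(0)=0$, a summation by parts in $N^2\msf L_N\langle\pi^N,H\rangle$ gives three contributions.
\begin{itemize}
\item A bulk term $\langle\pi^N_s,\Delta_N H\rangle$.
\item The flux $1\to0$, whose term $-N H(1/N)\eta(1)$ cancels against the bulk boundary remainder.
\item The flux $0\to1$, equal to
\[
N H(\tfrac1N)\,\alpha\,\eta_s(0)N^{-\theta}\bigl(1-\eta_s(1)\bigr)=H'(0)\,\alpha\,\eta_s(0)N^{-\theta}\bigl(1-\eta_s(1)\bigr)+o(1)\,\eta_s(0)N^{-\theta}.
\]
\end{itemize}
The quadratic variation is $O(N^{-1})$, using the bound $\mathbb E[\eta_s(0)]\lesssim N^{\max(\theta,1)}$ that comes from mass conservation. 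Mass conservation is also the key structural fact:
\[
\eta_t(0)=\mathcal M_N-\sum_{x\ge1}\eta_t(x),\qquad \mathcal M_N=\eta_0(0)+\sum_{x\ge1}\eta_0(x),
\]
and by the law of large numbers for the Poisson/Bernoulli initial law, $\mathcal M_N/N\to M=\gamma(0)/\alpha+\int\gamma$ when $\theta=1$.

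\emph{Identification by regime.} For $\theta=1$, write $\eta_s(0)/N=\mathcal M_N/N-\langle\pi^N_s,1\rangle$, which is a continuous functional of $\pi^N$ up to $o(1)$. It then remains to replace $1-\eta_s(1)$ by $1-\vec\eta^{\,\varepsilon N}_s(1)$, the average over the box $\{1,\dots,\varepsilon N\}$, inside the time integral multiplied by this bounded, slowly varying factor. I will prove this replacement lemma through a Dirichlet-form estimate relative to a reference measure $\nu_p$ from Proposition~\ref{prop:reversible}, with $p$ chosen via $0<\gamma(0)<1$. The same estimate yields the energy bound placing limit points in $\Sobolev$, so the box average converges to the trace $\rho_s(0)$ and \eqref{eq:cond_3} follows.

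For $\theta<1$ I instead use test functions with $H'(0)=H'(1)=0$ and add the reservoir to the empirical measure with weight $H(0)$. The boundary term then becomes $N(H(\tfrac1N)-H(0))[\ldots]=O(N^{-1})\bigl(\eta(0)N^{-\theta}+1\bigr)$. This vanishes provided $\eta_s(0)=o(N)$ in $L^1$, uniformly in time. I would obtain that bound by attractiveness: couple the process with a stationary one started from $\nu_p$ with $p<1$ dominating the initial law away from the reservoir, whose reservoir mean is $N^\theta p/(\alpha(1-p))=o(N)$, and check that the reservoir weight does not charge the limit.

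For $\theta>1$, $\eta_s(0)N^{-\theta}$ stays close to its initial value $\gamma(0)/\alpha$ because at most $O(N)\ll N^\theta$ particles move. The boundary term therefore reduces to $\gamma(0)H'(0)\,(1-\eta_s(1))$, and the task is to show it is consistent with the weak formulation in Definition~\ref{def:weak_homogeneous_Dirich}. This step requires controlling the occupation of site $1$ under the fast injection rate through a local equilibrium computation at the boundary.

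\emph{Main obstacle.} I expect the boundary replacements to be the main obstacle. For $\theta=1$ it is the nonlinear term, where a random factor of order one, $\eta(0)/N$, multiplies $1-\eta(1)$. For $\theta>1$, I am not confident that the Dirichlet form controls the boundary occupation at the required precision, nor that the resulting boundary term has exactly the form required by Definition~\ref{def:weak_homogeneous_Dirich}. My first attempt will exploit attractiveness, sandwiching the process between stationary versions from Proposition~\ref{prop:reversible}, before falling back on entropy and Dirichlet-form estimates; I would verify this case carefully before relying on it.
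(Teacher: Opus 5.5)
\textbf{Gap in the case $\theta\in(1,\infty)$.} For $\theta\in[0,1)$ and $\theta=1$ your plan is essentially the paper's. The reservoir enters through $H(0)$. Mass conservation turns $\eta_s(0)/N$ into $\msf M(N)/N-\frac1N\sum_{x\ge1}\eta_s(x)$. The replacement lemma and the energy estimate come from the entropy bound against $\nu_p$ with $p/(1-p)=\gamma(0)$. Your worry about the random factor in the $\theta=1$ replacement is legitimate but harmless. That factor is $\eta_s(0)/N$, which the bulk exchanges used in the replacement do not move, and it can be truncated using $\eta_s(0)\le\msf M(N)$.

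The genuine gap is the case $\theta\in(1,\infty)$, and it cannot be closed, because the target is false whenever $\gamma(0)>0$. Dynkin's formula for $f(\eta)=\eta(0)$ gives
\begin{equation*}
\eta_t(0)-\eta_0(0) \;=\; N^2\int_0^t\Big[\eta_s(1)-\frac{\alpha\eta_s(0)}{N^\theta}\big(1-\eta_s(1)\big)\Big]ds + \mc Z_t\,.
\end{equation*}
The left-hand side lies in $[-N,N]$ by conservation, and $\bb E[\mc Z_t^2]=O(N^2)$. Dividing by $N^2$, the time integral of the bracket is $O(1/N)$ in probability. Moreover $\alpha N^{-\theta}\eta_s(0)=\alpha N^{-\theta}\eta_0(0)+O(N^{1-\theta})\to\gamma(0)$ uniformly in $s$. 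Hence
\[
\int_0^t\eta_s(1)\,ds\;\to\;\frac{\gamma(0)}{1+\gamma(0)}\,t,
\]
and the boundary term \eqref{eq:eq5.16} integrates to $\frac{\gamma(0)}{1+\gamma(0)}\,t\,H'(0)\neq0$. So every limit point satisfies \eqref{integral_weak_solution_Dirichlet_homogeneous} with $\gamma(0)H'(0)$ replaced by $\frac{\gamma(0)}{1+\gamma(0)}H'(0)$. That is the heat equation with non-homogeneous Dirichlet datum $\rho(t,0)=\gamma(0)/(1+\gamma(0))$, unique by the proof of Proposition~\ref{prop:unique_4.3}, and it is not \eqref{heat_dirichlet}. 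This current identity is the ``boundary computation'' you were looking for; no local-equilibrium analysis at site $1$ is needed.

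\textbf{Consistency checks, and the paper's slip.} The corrected boundary value agrees with the rest of the model. The measure $\nu_p$ with $p/(1-p)=\gamma(0)$ has exactly the reservoir law of $\mu_N$, is stationary by Proposition~\ref{prop:reversible}, and has bulk density $\gamma(0)/(1+\gamma(0))$. The $\theta=1$ condition $\rho/(1-\rho)=\alpha m$ reduces to $\rho/(1-\rho)=\gamma(0)$ when the normalized reservoir mass stays at $\gamma(0)/\alpha$. For IRW the analogous value is $\rho(t,0)=\gamma(0)$.

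The paper's one-line treatment of this case (``$\theta\in(1,\infty)$ makes the boundary term vanish'') overlooks that $\alpha N^{-\theta}\eta_s(0)\to\gamma(0)$. Only the variation of the reservoir, of order $N^{1-\theta}$, vanishes. Your hesitation was therefore justified. The third bullet holds only when $\gamma(0)=0$, where $\eta_s(0)\le N$ and the reservoir really is a sink; otherwise it must be replaced by the Dirichlet problem above.

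\textbf{A minor point for $\theta<1$.} Dominating $\mu_N$ by a stationary $\nu_p$ requires $p\ge\Vert\gamma\Vert_\infty$, which is impossible if $\gamma$ reaches $1$. (The paper's $\widetilde\mu_N$ is not invariant either: its reservoir parameter should be $\frac{N^\theta}{\alpha}\frac{p}{1-p}$.) The entropy inequality against any $\nu_p$ gives $\bb E[\eta_s(0)]=o(N)$ uniformly in $s$ without this restriction. The relative entropy is $O(N)$ and non-increasing in time, and the reservoir marginal of $\nu_p$ has mean $O(N^\theta)=o(N)$.
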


\begin{remark}
For $\theta\in[0,1)$, in the random walk scenario presented in the previous subsection, the limit was given by a \textit{strong solution} to the heat equation with Neumann boundary conditions. Here the limit is stated in terms of a \textit{weak solution} because Varadhan's Entropy Method naturally leads to weak solutions via the limit of Dynkin's martingale. Moreover, the assumption $0<\gamma(0)<1$ is necessary only for $\theta=1$, in order to ensure an entropy estimate, which  is an ingredient in the proof of a local replacement lemma, which is needed only in that regime of $\theta$. The continuity of $\gamma$ is used to be able to identify the initial condition. 
\end{remark}

\begin{remark}
By the reasoning of Remark~\ref{remark:2.8},  differentiating in time the left boundary condition of \eqref{non_linear_Dirichlet}, we can say that  the  PDE  \eqref{non_linear_Dirichlet}  is formally the heat equation with a non-linear Wentzell boundary condition given by
\begin{equation*}
	\begin{cases}   \partial_t \rho(t,u) = \p_{uu}^2 \rho(t,u),  & \text{for } (t,u) \in (0,T]\times (0,1),   \vspace{3pt}\\
		\p_{uu}^2\rho(t,0)  = \alpha \big(1-\rho(t,0)\big)^2\p_{u}\rho(t, 0), &  \text{for } t \in (0,T],\vspace{3pt}\\
		\p_u\rho(t,1) = 0, &  \text{for } t \in (0,T],  \vspace{3pt}\\
		\rho(0,u) = \gamma(u), &  \text{for } u\in [0,1], \vspace{3pt}
	\end{cases}
\end{equation*}
where the constant $M$ in \eqref{non_linear_Dirichlet} can be interpreted in the same way as in the Remark~\ref{remark:2.9}, i.e. as the total mass of the system (reservoir plus bulk) and $\p_{uu}^2\rho(t,0)$ is the time-derivative of the mass in the reservoir.
\end{remark}

\begin{figure}[!htb]
\begin{tikzpicture}[scale=1]
	\fill[fill = lightgray]
	(-1,0) rectangle (0,2.5);
	\draw[thick] (-1,0) -- (-1,3);
	\fill[fill = lightgray] (0,0) to  (0,1) to[out=20,in=180] (2,2) to[out=0,in=180] (4,0.5)
	to (4,0) to (0,0);
	\draw[thick,->] (-1.5,0) -- (5,0) node[below] {$u$};
	\draw[thick,->] (0,-0.75) -- (0,3);
	\draw (2.2, 2.3) node[right]{$\rho(t,u)$};
	\draw (0, 2.5) node[right]{$\frac{\rho(t,0)}{1-\rho(t,0)}$};
	\draw[thick,<->] (-0.95,-0.2) -- (-0.05,-0.2)   node[midway, below] {\small $\dfrac{1}{\alpha}$};
	\draw (4,0) node[below]{$1$};
	\draw (0,0) node[anchor = north west]{$0$};
	\draw[very thick] (0,1) to[out=20,in=180] (2,2) to[out=0,in=180] (4,0.5);
	\fill (0,1) circle (0.07);
	\fill (0,2.5) circle (0.07);
	\node[right] at (0,0.9) {$\rho(t,0)$};
	\draw [thick, white] (4,0) -- (4,2);
	\draw [ thick] (4,0) -- (4,2);
\end{tikzpicture}
\caption{Model of diffusion corresponding to the parabolic PDE \eqref{non_linear_Dirichlet}. The segment of length $1/\alpha$ represents the finite reservoir in contact with the left boundary of the box $[0,1]$.}
\label{fig:fig4}
\end{figure}
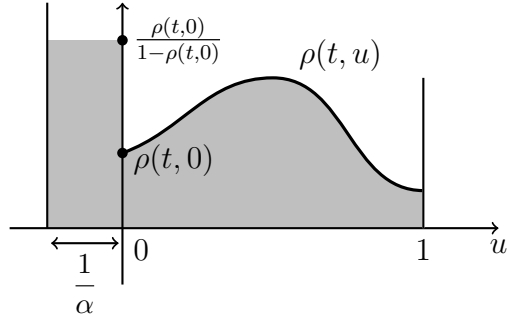

\begin{remark}
Similarly to what we did in Remark~\ref{rmk:model}, we can interpret  the PDE \eqref{non_linear_Dirichlet} as a  diffusion model of an interval $[0,1]$ isolated at the right boundary and in contact with a finite reservoir at the left boundary, where the constant $M$ in the non-linear non-local Dirichlet boundary condition satisfies
\begin{equation*}
	\dfrac{\rho_t(0)}{1-\rho_t(0)} \;=\; \alpha\Big(M - \int_0^1 \rho_t(u)du\Big)
\end{equation*}
is interpreted as the total mass of the system.  We  illustrate it in Figure~\ref{fig:fig4}.
Note that the boundary condition  above plays the role of a thermostat, i.e. since $0\leq\rho<1$,  the height $\frac{\rho_t(0)}{1-\rho_t(0)}$ of the reservoir always stays above the density
$\rho_t(0)$ of the bulk at the left boundary. Additionally, the profile $\rho$ should remain below one, which is reasonable  since the hydrodynamic equation \eqref{non_linear_Dirichlet} is the limit of an exclusion process, where at most one particle is allowed  per site.  Moreover, the case $\rho\equiv 1$  heuristically leads the total mass to be $M=\infty$, which intuitively agrees with the invariant measure~\eqref{nu_p}. We  point out that  Wentzell boundary conditions have been also interpreted as thermostats, see \cite{Goldstein2006}.
\end{remark}

\begin{figure}[!htb]
\begin{tikzpicture}[scale=1]
	\fill[fill = lightgray]
	(-2.5,0) rectangle (0,1);
	\draw[thick] (-2.5,0) -- (-2.5,1.5);
	\fill[fill = lightgray] (0,0) to  (0,1) to[out=20,in=180] (2,2) to[out=0,in=180] (4,0.5)
	to (4,0) to (0,0);
	\draw[thick,->] (-3,0) -- (5,0) node[below] {$u$};
	\draw[thick,->] (0,-0.75) -- (0,3);
	\draw (2.2, 2.3) node[right]{$\rho(t,u)$};
	\draw[thick,<->] (-2.45,-0.2) -- (-0.05,-0.2)   node[midway, below] {$\frac{1}{\alpha(1-\rho(t,0))}$};
	\draw (4,0) node[below]{$1$};
	\draw (0,0) node[anchor = north west]{$0$};
	\draw[very thick] (0,1) to[out=20,in=180] (2,2) to[out=0,in=180] (4,0.5);
	\fill (0,1) circle (0.07);
	\node[right] at (0,0.9) {$\rho(t,0)$};
	\draw [thick, white] (4,0) -- (4,2);
	\draw [thick] (4,0) -- (4,2);
\end{tikzpicture}
\caption{A second model of diffusion corresponding to the parabolic PDE \eqref{non_linear_Dirichlet}. The segment of length $1/\big(\alpha(1-\rho(t,0))\big)$ represents the finite reservoir whose capacity is a function of its internal height.}
\label{fig:fig5}
\end{figure}
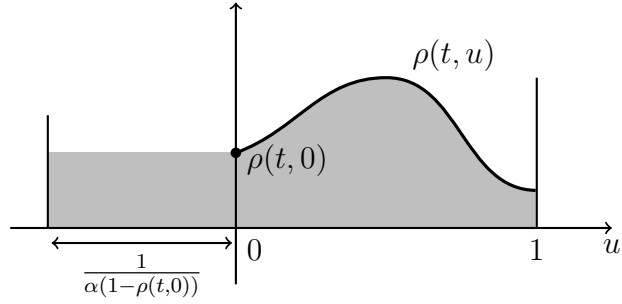

\begin{remark}
We can relate the  hydrodynamic equation \eqref{non_linear_Dirichlet} to another physical model of diffusion avoiding the thermostat interpretation.
To do so, we assume  that the capacity of the reservoir  is a function of its internal height, given by $1/\big(\alpha(1-\rho(t,0))\big)$, which is represented by its width, see Figure~\ref{fig:fig5}. In this setting,  the reservoir and the left boundary of the bulk will be at equilibrium.
\end{remark}

\begin{remark}
In the regime $\theta\in(1,\infty)$, the Dirichlet boundary condition obtained in Theorem~\ref{thm:hydro_ex} is homogeneous, see equation \eqref{heat_dirichlet}, so the density close to the reservoir is null. On the other hand, the Dirichlet boundary condition obtained in Theorem~\ref{thm:hydrolimit_rws} for IRW is not homogeneous, which means that the density close to the reservoir is fixed and equal to~$\gamma(0)$, see  equation \eqref{weak_solution_Dirichlet}. This can be understood taking into account the fact that it is harder to jump from the reservoir to the site $x=1$ in the exclusion scenario, which leads the reservoir to behave as a sink.
\end{remark}

\section{Proof of propagation of local equilibrium for IRW}\label{s3}
The argument is split into two parts: the convergence in distribution of a single random walk, and an explicit formula for the joint Laplace transform of the system. We start with the former. 

\subsection{Convergence in distribution of the underlying RW}
In the proof of propagation of local equilibrium, we will have to understand the convergence in distribution of a single  random walk (see also Figure~\ref{fig:fig2}) for each regime of $\theta$. This is the content of this subsection. Let $\{Y_t^{N, \theta}:t\geq 0\}$ be the continuous-time random walk on the state-space $\{0, \pfrac{1}{N},\ldots, \pfrac{N-1}{N}, 1\}$ whose generator acts on functions $f:\{0, \pfrac{1}{N},\ldots, \pfrac{N-1}{N}, 1\}\to \bb R$ as
\begin{equation}\label{gene_RW}
\bb L_{N, \theta} f\big(\pfrac{x}{N}\big) \;=\;
N^2\times \begin{cases}
	f(\pfrac{x+1}{N})+f(\pfrac{x-1}{N}) -2 f(\pfrac{x}{N}), & \text{ if } 0<x<N, \vspace{3pt}\\
	f(\pfrac{N-1}{N})- f(1), & \text{ if } x=N, \vspace{3pt}\\
	\dfrac{\alpha}{N^\theta}\big[f(\pfrac{1}{N})- f(0)\big], & \text{ if } x=0. \vspace{3pt}\\
\end{cases}
\end{equation}
Note that the random walk $Y^{N,\theta}_t$ corresponds to the position normalized by $N$ of a single particle in the particle system $\eta_t$ defined by \eqref{generator_rw}.

\begin{proposition}[Convergence of the underlying random walk]\label{prop:RW}
Fix $0<u<1$, fix $k$ a positive integer, and let  $Y_t^{N\, \theta}$ be the random walk above  starting from any site in the window $\lfloor uN\rfloor/N-k, \ldots, \lfloor uN\rfloor/N+k$. 
Then $\{Y^{N,\theta}_t:t\in [0,T]\}$ converges in 
distribution with respect to the Skorokhod topology of $D([0,T];\bb R)$  to $\{Y_t^\theta:t\in [0,T]\}$ as $N\to\infty$, where $Y_t^\theta$ is the Feller process on the state-space $[0,1]$ starting from $u$, such that:\smallskip
\begin{itemize}
	\item if $\theta\in [0,1)$, it  is a  Brownian motion reflected at $0$ and at $1$, that is, its generator is
	$\bb L_\theta f(u) \;=\; f''(u)$ with domain 
    \[\mf D(\bb L_\theta)=\big\{f\in C^2[0,1]: f'(0)= f'(1) = 0\big\}\,.\]
	
	\item if $\theta=1$, it is a sticky Brownian motion at $0$ and reflected  at $1$, that is, its generator is $\bb L_\theta f(u) = f''(u)$  with domain 
    \[\mf D(\bb L_\theta)\;=\;\big\{f\in C^2[0,1]: f''(0) = \alpha f'(0) \text{ and } f'(1) = 0\big\}\,.\]
	
	\item if $\theta\in (1,\infty)$, it is a Brownian motion  absorbed at $0$ and reflected  at $1$, that is, its generator is $\bb L_\theta f(u) = f''(u)$  with domain
    \[\mf D(\bb L_\theta)\;=\;\big\{f\in C^2[0,1]: f''(0) = 0 \text{ and } f'(1) = 0\big\}\,.\]
\end{itemize}
\end{proposition}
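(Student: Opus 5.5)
The plan is to use the standard Trotter--Kurtz semigroup convergence theorem for Feller processes, as in Ethier--Kurtz, Chapter~1, Theorem~6.1, together with Chapter~4, Theorem~2.11. Convergence in the Skorokhod topology follows once we show three things. First, the limiting operator $\bb L_\theta$ on $\mf D(\bb L_\theta)$, or at least on a core of it, is closable and its closure generates a Feller semigroup on $C[0,1]$. Second, for every $f$ in that core there are $f_N$ on the grid $\{0,\pfrac1N,\ldots,1\}$ with $\|f_N-f\|_\infty\to0$ and $\|\bb L_{N,\theta}f_N-\bb L_\theta f\|_\infty\to0$. Third, the initial positions converge; this is immediate because the starting site lies in a window of width $2k/N$ around $u$.

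For the generator part, I take the three processes as standard. They are reflected, sticky and absorbed Brownian motion on $[0,1]$ with reflection at $1$, and their generators are exactly the Feller--Wentzell operators in the statement. I also take $C^4$ functions satisfying the boundary conditions (and $f'''(1)=0$ where needed) to be a core, which follows from the Hille--Yosida resolvent equation $\lambda f - f''=g$ being solvable with these boundary conditions for smooth $g$.

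The core of the proof is the construction of $f_N$. For $0<x<N$, Taylor's formula gives $N^2[f(\pfrac{x+1}N)+f(\pfrac{x-1}N)-2f(\pfrac xN)]=f''(\pfrac xN)+O(N^{-2})$ uniformly. At $x=N$, $f'(1)=0$ gives $N^2[f(\pfrac{N-1}N)-f(1)]=\tfrac12 f''(1)+O(N^{-1})$. This is not $f''(1)$, so I correct $f$ near $1$ by adding $c\,N^{-2}$ times a fixed smooth bump localized at $1$, or equivalently by redefining $f_N(1)$ alone. Changing the single value $f_N(1)=f(1)+\tfrac{1}{2N^2}f''(1)$ gives the right value at $x=N$ and perturbs the value at $x=N-1$ by only $O(1)$. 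To avoid even that error, I instead take $f_N(\pfrac xN)=f(\pfrac xN)$ and use the reflected extension $f(1+h)=f(1-h)$, which is $C^3$ when $f'(1)=f'''(1)=0$. The boundary term at $N$ is then half the symmetric second difference, and I adjust $f_N(1)$ by $O(N^{-2})$, which costs $O(1)\cdot N^2\cdot N^{-2}$ at neighbours. In the end the cleanest route is to define $f_N(1)$ by solving the one-line equation so that $\bb L_{N,\theta}f_N(1)=f''(1)$, and to check that the induced error at $N-1$ is $O(N^{-1})$ because $f''(1)$ is balanced by $f'(1)=0$. I expect this bookkeeping to be routine.

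The real work, and the main obstacle, is site $0$, treated separately for each regime. Here $\bb L_{N,\theta}f(0)=\alpha N^{2-\theta}[f_N(\pfrac1N)-f_N(0)]$.

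\emph{Case $\theta=1$.} With $f_N=f$ we get $\alpha N[f(\pfrac1N)-f(0)]=\alpha f'(0)+O(N^{-1})=f''(0)+O(N^{-1})$, by the sticky condition. The bulk formula at $x=1$ uses $f(0)$, which is fine, so no correction is needed.

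\emph{Case $\theta>1$.} Here $f''(0)=0$. Since $\bb L_{N,\theta}f(0)=\alpha N^{1-\theta}(f'(0)+o(1))\to0=f''(0)$, again $f_N=f$ works.

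\emph{Case $\theta<1$.} Now $f'(0)=0$ but $N^{2-\theta}[f(\pfrac1N)-f(0)]\approx \tfrac12 N^{-\theta}f''(0)$, which tends to $0$ and not to $f''(0)$, unless $\theta=0$, where one gets the wrong constant. I will therefore set $f_N(0)=f(0)-\frac{N^{\theta-2}}{\alpha}\,c_N$ with $c_N$ chosen so that $\bb L_{N,\theta}f_N(0)=f''(0)$. This perturbation has size $O(N^{\theta-2})$, so $\|f_N-f\|_\infty\to0$. At $x=1$ it changes the bulk operator by $N^2\cdot O(N^{\theta-2})=O(N^{\theta})$, which does not vanish, and that is the obstacle.

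To fix it, I will instead modify $f_N$ on the whole first layer through a discrete harmonic-type correction. I choose $f_N=f+N^{-2}g_N$, where $g_N$ is supported on $O(1)$ sites near $0$ and is determined by requiring exact equations at sites $0$ and $1$. The alternative I would try first is to exploit $f'(0)=0$ and use the even extension, so that the bulk Taylor expansion holds at $x=1$ with $f(0)$, and to check directly that $\alpha N^{2-\theta}[f(\pfrac1N)-f(0)]$ versus $f''(0)$ produces only a small error once the sup is taken weighted by the vanishing time the walk spends at $0$.

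If uniform generator convergence at $0$ fails for $\theta<1$, the fallback is a martingale-problem argument. Tightness comes from Aldous' criterion, since the bulk increments are diffusive. For identification, note that for $f$ in the domain, $f(Y^N_t)-\int_0^t\bb L_{N,\theta}f(Y^N_s)ds$ is a martingale, and the contribution of the site $0$ is bounded by $N^{2-\theta}\cdot N^{-2}\,\|f''\|_\infty$ times the occupation time of $0$. That occupation time has expectation $O(N^{\theta-1})$ by comparison with the reversible measure of Proposition~\ref{prop:reversible_RW}, and so it vanishes.
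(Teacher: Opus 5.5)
There is a genuine gap: neither of your boundary fixes gives uniform generator convergence. Your framework matches the paper's: Ethier--Kurtz with sup-norm generator convergence, and no left correction for $\theta\ge 1$. You also correctly identify the two defects of $\pi_N f$, namely the factor $\frac12$ at $x=N$ and the wrong limit at $x=0$ for $\theta<1$.

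\emph{Right boundary.} At $x=N$ the required change is $f_N(1)=f(1)-\frac{f''(1)}{2N^2}+O(N^{-3})$; your first formula has the sign reversed. Any single-site change of this size shifts the bulk operator at $x=N-1$ by $N^2\cdot\frac{f''(1)}{2N^2}=\frac12 f''(1)$. Nothing involving $f'(1)=0$ cancels this, so your claim that the induced error at $N-1$ is $O(N^{-1})$ is false.

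\emph{Left boundary, $\theta<1$.} The same obstruction appears. Write $f_N=\pi_N f+N^{-2}g_N$ with $g_N$ supported in a fixed set $\{0,\dots,K\}$. Requiring $\Delta g_N(x)=o(1)$ for $x=1,\dots,K+1$, together with $g_N(K+1)=g_N(K+2)=0$, forces $g_N=o(1)$ on $\{0,\dots,K\}$ by backward recursion. But exactness at $0$ requires $g_N(1)-g_N(0)$ to be of order $N^{\theta}$. So no correction living on $O(1)$ sites, of any amplitude, yields $\Vert \bb L_{N,\theta}f_N-\pi_N\bb L_\theta f\Vert_\infty\to0$.

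\emph{The missing idea: macroscopic correctors.} The one-sided lattice slope needed at the boundary is of order $N^{-2}$ at $x=N$ and of order $N^{\theta-2}$ at $x=0$. It must be spread over a region of macroscopic size, so that it produces an order-one boundary difference while its discrete Laplacian in the bulk vanishes. The paper takes $f_N=\pi_N f+\frac{f''(1)}{2N}h(\pfrac{x}{N})+\widehat f_{N,\theta}$, where $h$ is smooth, supported in $(0,1]$, with $h'(1)=-1$, and
\begin{itemize}
\item $\widehat f_{N,\theta}(\pfrac{x}{N})=\frac{N^{\theta-1}}{\alpha}f''(0)h(1-\pfrac{x}{N})$ for $\theta\in(0,1)$;
\item $\widehat f_{N,\theta}(\pfrac{x}{N})=\frac{1}{\alpha N}(1-\frac{\alpha}{2})f''(0)h(1-\pfrac{x}{N})$ for $\theta=0$;
\item $\widehat f_{N,\theta}=0$ for $\theta\ge1$.
\end{itemize}
These correctors have sup norms and bulk Laplacians of order $N^{-1}$ and $N^{\theta-1}$. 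In the limit they supply the missing $\frac12 f''(1)$ at $x=N$ and $f''(0)$ (resp.\ $(1-\frac{\alpha}{2})f''(0)$) at $x=0$. This is exactly where $\theta<1$ is used, and the domain itself then serves as the core, so no $C^4$ or $f'''(1)=0$ restriction is needed.

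\emph{Your fallback.} The martingale-problem route could avoid correctors altogether, but as written you invoke it only at $x=0$. It would also have to absorb the order-one error at $x=N$, which needs an $O(N^{-1})$ occupation-time bound for site $N$. The $O(N^{\theta-1})$ bound for the occupation of site $0$ does not follow from comparison with the reversible measure started from a point mass. That comparison only gives $p_s(x_0,0)=\frac{N^\theta}{\alpha}\,p_s(0,x_0)$, so you still need a local estimate on $p_s(0,x_0)$ or a count of visits to site $1$. You would also need well-posedness of the martingale problem for $\bb L_\theta$.
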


Note that in all  cases above $\bb L_\theta f(u) = f''(u)$, whilst a usual Brownian motion has generator $ \frac{1}{2}f''$, so a factor of $1/2$ is missing. That is, to be more precise, we should say  that all the limiting processes in Proposition~\ref{prop:RW} are Brownian motions at time $2t$.

\begin{proof}[Proof of Proposition~\ref{prop:RW}]
Since the state-space here is the set $\{0, \pfrac{1}{N}, \ldots, \pfrac{N-1}{N}, 1\}$, since $k$ is fixed, and  $k/N\to 0$ as $N\to \infty$,  we can assume without loss of generality that the starting point of $\{Y^{N, \theta}_t:t\in[0,T]\}$ is $x=\lfloor uN\rfloor/N$.
Denote by $\Vert \cdot\Vert_\infty$ the supremum norm.
By \cite[Theorem~6.1, page~28 and Theorem~2.11, page~172]{EK}, to assure convergence in distribution of $\{Y^{N,\theta}_t:t\in[0,T]\}$, it is enough to find a core   $\mc C_\theta$ for the generator $\bb L_\theta$ such that for any $f\in \mc C_\theta$ there exists a sequence $f_N\in \mf D(\bb L_\theta)$ satisfying  
\begin{align}
	& \Vert f_N - \pi_N f\Vert_\infty\to 0 \quad \text{ and } \label{SovaKurtz_1}\\
	& \Vert \bb L_{N,\theta}f_N - \pi_N \bb L_\theta f\Vert_\infty\to 0\,,\label{SovaKurtz_2}
\end{align}
where $\pi_N f$ is the restriction of  $f:[0,1]\to \bb R$ to the lattice $\{0, \frac{1}{N},\ldots,\frac{N-1}{N},1\}$.  In any of the three regimes of $\theta$ given above,  we choose the core as the domain itself, that is, $\mc C_\theta=\mf D(\bb L_\theta)$. Given $f\in \mf D(\bb L_\theta)$, the approximating sequence $f_N$ will be  defined as
\begin{equation*}
	f_N \;:=\; \pi_N f + \widetilde{f}_N + \widehat{f}_{N,\theta} \,,
\end{equation*}
where
\begin{equation}\label{eq:f_Ntilde}
	\widetilde{f}_N\big(\pfrac{x}{N}\big) \;:=\; \frac{f''(1)}{2} \cdot \frac{h \big(\pfrac{x}{N}\big)}{N}\,, 
\end{equation}
being $h:[0,1]\to \bb R$ a smooth function with compact support in $(0,1]$ such that $h'(1)=-1$,
and 
\begin{equation}\label{eq:f_Nhat}
	\widehat{f}_{N, \theta}\big(\pfrac{x}{N}\big) \;:=\;  
	\begin{cases}
		\dfrac{1}{\alpha N}\big(1-\pfrac{\alpha}{2}\big)f''(0) \cdot h \big(1-\pfrac{x}{N}\big) ,  & \text{ if }\theta =0, \vspace{7pt}\\
		\dfrac{N^{\theta-1}}{\alpha} f''(0) \cdot h \big(1-\pfrac{x}{N}\big) ,  & \text{ if }\theta \in (0,1),\vspace{5pt}\\
		0, & \text{ if }\theta \in [1,\infty),\\
	\end{cases}
\end{equation}
for $x\in \{0,1,\ldots, N\}$. As we shall see, the functions $\widetilde{f}_N$ and  $ \widehat{f}_{N, \theta}$   play the role of  ``correctors'' at the sites $N$ and $0$, respectively. Since $\Vert \widetilde{f}_N\Vert_\infty$ and $\Vert \widehat{f}_{N, \theta}\Vert_\infty$ go to zero as $N\to\infty$, then  \eqref{SovaKurtz_1} holds. It remains to check \eqref{SovaKurtz_2}.

We start by observing that, according to \eqref{gene_RW}, the generator $\bb L_{N,\theta} f(\frac{x}{N})$ is the discrete Laplacian for any  $x\in \{1, \ldots, N-1\}$, which approximates the continuous Laplacian since $f\in C^2[0,1]$. Therefore,
\begin{equation*}
	\sup_{x\in \{1,\ldots, N-1\}} \big\vert \bb L_{N,\theta}\pi_N f(\pfrac{x}{N}) -  \bb L_\theta f(\pfrac{x}{N})\big\vert \longrightarrow 0\,, \quad \text{ as }N\to\infty\,. 
\end{equation*}
Recall that $f\in C^2[0,1]$. At $x=N$, by a Taylor expansion of $f$   around the point $u=1$ with remainder in Lagrange form, and using the fact that $f'(1)=0$, we conclude that 
\begin{equation*}
	\bb L_{N,\theta} \pi_N f(1) \;=\; N^2 \big[f(\pfrac{N-1}{N}) - f(1) \big] \;=\; \frac{1}{2} f''(\zeta)
\end{equation*}
for some $\zeta \in (1-1/N,1)$. Thus  
\begin{equation*}
	\bb L_{N,\theta} \pi_N f(1) \longrightarrow \frac{1}{2} f''(1)\quad \text{ as }N\to \infty\,.
\end{equation*}
Note that the generator of the limiting process at $u=1$ is $f''(1)$ in all cases, so the $1/2$ factor above is not the desired one (which illustrates the need of the ``correctors''). From  \eqref{eq:f_Ntilde}, 
\begin{align*}
	\bb L_{N,\theta} \widetilde{f}_N(\pfrac{N}{N})  & = N^2 \big[\widetilde{f}_N(\pfrac{N-1}{N}) - \widetilde{f}_N(1) \big]
	\\
	&= \frac{f''(1)}{2} \cdot N\Big[h\big(\pfrac{N-1}{N}\big) - h(1) \Big]\;\longrightarrow\; -h'(1)\frac{f''(1)}{2} = \frac{f''(1)}{2}\,.
\end{align*}
Recall  \eqref{eq:f_Nhat}. Since $h$ has compact support in $(0,1]$, then $h(1-u)$ has compact support in $[0,1)$, hence $\bb L_{N,\theta}\widehat{f}_{N, \theta}(1)=0$ for $N$ large enough.
Therefore, 
\[\bb L_{N,\theta} f_N(1)\;\longrightarrow\; f''(1) = \bb L_\theta f(1)\,.\] 
It remains to study the convergence of $\bb L_{N,\theta} f_N(0)$. To do so, we divide the analysis according to the regime of $\theta$.\medskip

 $\bullet$ \textbf{Case  $\theta\in [0,1)$.}
By a Taylor expansion of $f$ around the point $u=0$,
\begin{equation*}
	\bb L_{N,\theta} \pi_N f(0) \;=\; \alpha N^{2-\theta} \big[f(\pfrac{1}{N}) - f(0) \big] \;=\; \alpha N^{2-\theta} \Big[\frac{f'(0)}{N} + \frac{f''(\zeta)}{2N^2}\Big] \,,
\end{equation*}
for some $\zeta\in [0,1/N]$. Since $f'(0)=0$ and $f\in C^2[0,1]$, we conclude that
\begin{equation*}
	\bb L_{N,\theta} \pi_N f(0) \;\longrightarrow\; 
	\begin{cases}
		\frac{\alpha}{2}f''(0), & \text{ if } \theta=0,\\ 
		0, & \text{ if } \theta\in (0,1),
	\end{cases}
\end{equation*}
as $N\to\infty$. Since $h$ has compact support in $(0,1]$, then $\bb L_{N,\theta} \widetilde{f}_N(0)  = 0$. 
From \eqref{eq:f_Nhat}, we have that
\begin{align*}
	\bb L_{N,\theta} \widehat{f}_{N, \theta}(0) \; \longrightarrow\; 
	\begin{cases}
		\big(1-\frac{\alpha}{2}\big)f''(0), & \text{ if } \theta=0,\\ 
		f''(0), & \text{ if } \theta\in (0,1),
	\end{cases}
\end{align*}
thus $\bb L_{N,\theta} f_N(0)\to f''(0) = \bb L_\theta f(0)$.
\medskip

 $\bullet$ \textbf{Case $\theta = 1$.} Since the boundary condition is $\alpha f'(0) = f''(0)$, 
\begin{equation*}
	\bb L_{N,\theta} \pi_N f(0) =\alpha N \big[f(\pfrac{1}{N}) - f(0) \big]\;\longrightarrow\; \alpha f'(0)  =
	\bb L_{\theta} f(0)\,,
\end{equation*}
 hence $\bb L_{N,\theta} f_N(0)\to f''(0) = \bb L_\theta f(0)$.\medskip

 $\bullet$ \textbf{Case $\theta\in (1,\infty)$.}  Since $f$ is Lipschitz, 
\begin{equation*}
	\bb L_{N,\theta} \pi_N f(0) = \alpha N^{2-\theta} \big[f(\pfrac{1}{N}) - f(0) \big] \;\longrightarrow\; 0
	= f''(0)= \bb L_{\theta} f(0)\,.
\end{equation*}
so $\bb L_{N,\theta} f_N(0)\to f''(0) = \bb L_\theta f(0)$.
Putting together all the cases above, we establish \eqref{SovaKurtz_2}, finishing the proof.
\end{proof}

\subsection{Joint Laplace transform of IRW}
The proof of the propagation of local equilibrium is based on the analysis of the Laplace transform  of the vector
$\eta_t=\break (\eta_t(0), \eta_t(1),\ldots, \eta_t(N))$, where we recall that $\eta_t(x)$ denotes the number of particles at the site $x$ at time $t$.  Let  $(\zeta(x))_{x=0}^N$ be a family of parameters and consider the joint Laplace transform
\begin{equation*}
\Phi(\zeta)\;=\; \Phi_{\eta_t}(\zeta)
\;:=\;
\mathbb{E}_{\mu_N}\bigg[\exp\Big\{-\sum_{x=0}^N \zeta(x)\eta_t(x)\Big\}\bigg]\,.
\end{equation*}
Let $X_t^{y,k}$ denote the position at time $t$ of the $k$-th random walk that started at site $y$. 
Then
\[
\eta_t(x) \;=\; \sum_{y=0}^N \sum_{k=1}^{\eta_0(y)} \mathbf{1}\{X_t^{y,k}=x\}\,,
\]
and then
\begin{align*}
\sum_{x=0}^N \zeta(x)\eta_t(x)
& = \sum_{x=0}^N \zeta(x)\sum_{y=0}^N \sum_{k=1}^{\eta_0(y)}\mathbf{1}\{X_t^{y,k}=x\}= \sum_{y=0}^N \sum_{k=1}^{\eta_0(y)}\zeta(X_t^{y,k}) \sum_{x=0}^N \mathbf{1}\{X_t^{y,k}=x\}\\
&=\sum_{y=0}^N \sum_{k=1}^{\eta_0(y)}\zeta(X_t^{y,k})\,.
\end{align*}
Thus
\begin{align*}
\exp\Big\{-\sum_{x=0}^N\zeta(x)\eta_t(x)\Big\}
&\;=\; \prod_{y=0}^N \prod_{k=1}^{\eta_0(y)} \exp\Big\{- \zeta(X_t^{y,k})\Big\}\,.
\end{align*}
Since the random walks $X^{y,k}_t$ are independent, we get
\begin{align*}
\mathbb{E}_{\mu_N}\Big[\exp\Big\{-\sum_{x=0}^N\zeta(x)\eta_t(x)\Big\}\Big]
&\;=\; \prod_{y=0}^N \mathbb{E}_{\mu_N}\bigg[
\exp\Big\{-\sum_{k=1}^{\eta_0(y)}\zeta(X_t^{y,k})\Big\}
\bigg].
\end{align*}
Since the random walks $X_t^{y,k}$ are also  independent of the initial quantity of particles $\eta_0(y)$ for every $y \in \{0, \ldots, N\}$, 
we can apply the Substitution Principle (see \cite[Example~5.1.5]{Durrett}) to rewrite the last display as
\begin{align}
\prod_{y=0}^N \mathbb{E}_{\mu_N}\Bigg[ \mathbb{E}_{\mu_N}\bigg[
\exp\Big\{-\sum_{k=1}^{\eta_0(y)}\zeta(X_t^{y,k}) \Big\}\Big\vert &  \eta_0\bigg]
\Bigg] \;=\; \prod_{y=0}^N\mathbb{E}_{\mu_N}\bigg[
\prod_{k=1}^{\eta_0(y)}\mathbb{E}_{\mu_N}\Big[e^{- \zeta(X_t^{y,k})}\Big]
\bigg] \notag \\ 
&\;=\; \prod_{y=0}^N\mathbb{E}_{\mu_N}\bigg[
\mathbb{E}_{\mu_N}\Big[e^{- \zeta(X_t^{y,1})}\Big]^{\eta_0(y)}
\bigg]\,,\label{eq_algo}
\end{align}
where in the last equality we used the fact that random walks starting from $y$ are identically distributed.
Recall the  definition of the initial measure $\mu_N$ in~\eqref{mu_N}
and  the standard fact that a random variable $Z\sim \mathrm{Poisson}(\lambda)$ has a probability generating function given by $\bb E[s^{Z}] = \exp\{\lambda(s-1)\}$. We can thus rewrite \eqref{eq_algo} as follows:
\begin{align}\label{eq_algo_2}
\prod_{y=0}^N\exp\Big(
\lambda(y)\big(\mathbb{E}_{\mu_N}[e^{-\zeta(X_t^{y,1})}]-1\big)
\Big)\;=\; 
\exp\bigg[\sum_{y=0}^N
\lambda(y)\Big(\mathbb{E}_{\mu_N}[e^{-\zeta(X_t^{y,1})}]-1\Big)
\bigg],
\end{align}
where 
\begin{equation}\label{Poisson}
\lambda(y) \;=\; \begin{cases}
	\frac{N^{\theta}}{\alpha}\gamma(\pfrac{0}{N}),& \text{ if } y=0,\vspace{3pt}\\
	\gamma(\pfrac{y}{N}),& \text{ if } y=1, \ldots, N\\
\end{cases}
\end{equation}
is the parameter of the Poisson at the site $y$, see  \eqref{mu_N}. We now rewrite the right-hand side of \eqref{eq_algo_2} as
\begin{align}\label{eq_algo_3}
\exp\bigg[\sum_{y=0}^N
\lambda(y)\sum_{z=0}^N \big\{e^{-\zeta(z)}\!-\!1\big\}\, p_t(y,z) 
\bigg] = \exp\bigg[\sum_{z=0}^N
\big\{e^{-\zeta(z)}\!-\!1\big\}\sum_{y=0}^N \lambda(y)\, p_t(y,z)
\bigg] ,
\end{align}
where $p_t(y,z)= \mathbb{P}_{\mu_N} ( X^{y,1}_t = z)$  is  the transition probability of the random walk.  Now we want to write the rightmost sum on  \eqref{eq_algo_3} as an expectation with respect to $p_t(y,z)$. To do so, we note that the probability measure $\pi$ on $\{0,\ldots, N\}$ given by
\begin{equation}\label{invariant_measure}
\pi(0)\;=\; \frac{\frac{N^\theta}{\alpha}}{\frac{N^\theta}{\alpha}+N}\quad \text { and }\quad
\pi(x) \;=\; \frac{1}{\frac{N^\theta}{\alpha}+N}\quad \text{ for } x=1,\ldots, N
\end{equation}
is reversible for $p_t(y,z)$, that is, $\pi(y)p_t(y,z) = \pi(z) p_t(z,y)$. Applying this fact, we can rewrite  \eqref{eq_algo_3} as
\begin{align*}
& \exp\bigg[\sum_{z=0}^N
\big\{e^{-\zeta(z)}-1\big\}\sum_{y=0}^N \lambda(y)\,\frac{\pi(z)}{\pi(y)} p_t(z,y) 
\bigg] \\
&\;=\;  \exp\bigg[\sum_{z=0}^N
\big\{e^{-\zeta(z)}-1\big\}\pi(z)\sum_{y=0}^N  p_t(z,y) \,\frac{\lambda(y)}{\pi(y)}
\bigg]. 
\end{align*}
Renaming the variable $z$ to $x$, we arrive at the identity
\begin{equation}
\label{Laplace_transform}
\begin{split}
	\Phi_{\eta_t}(\zeta)
	&\;=\; \exp\bigg[\sum_{x=0}^N
	\big\{e^{-\zeta(x)}-1\big\}\pi(x)\sum_{y=0}^N  p_t(x,y) \,\frac{\lambda(y)}{\pi(y)}
	\bigg].
\end{split}
\end{equation}
\begin{remark}
Up to this point, we have not yet used the expression \eqref{Poisson} for the Poisson parameter $\lambda(x)$. 
Assuming that
\begin{equation*}
	\lambda(y) \;=\; \begin{cases}
		\frac{N^{\theta}}{\alpha}\cdot \lambda,& \text{ if } y=0,\vspace{3pt}\\
		\lambda,& \text{ if } y=1, \ldots, N\\
	\end{cases}
\end{equation*}
for a constant $\lambda>0$ (here we are abusing notation), one can infer from \eqref{Laplace_transform} that the measure $\nu_\lambda$ defined in \eqref{nu_lambda} is invariant. In the next section we will prove  that $\nu_\lambda$ is  actually reversible, as stated in Proposition~\ref{prop:reversible_RW}. 
\end{remark}
Note that the identity \eqref{Laplace_transform} already tells us that, for any time $t>0$, the distribution of $\eta_t$ is a Poisson product measure whose parameter at the site $x$ is given by
\begin{equation}\label{psi_def}
      \psi^N_t(x) \;:=\; \pi(x)\sum_{y=0}^N  p_t(x,y) \,\frac{\lambda(y)}{\pi(y)}\,.
\end{equation}
In the next proof we estimate this parameter.
  
\begin{proof}[Proof of Theorem~\ref{thm:propagation_local}]
Fix $0<u<1$ and  any positive integer $k$. Since we are going to take the limit as $N\to \infty$, assume without loss of generality that $\lfloor uN\rfloor >k$ so the random vector 
\[\eta_{t}^{k, u} := \big(\eta_{t}(\lfloor uN\rfloor - k), \ldots, \eta_{t}(\lfloor uN\rfloor + k)\big)\]
does not contain the site zero.  By the Laplace transform formula \eqref{Laplace_transform}, choosing $\zeta(x)=0$ for $x\notin \{\lfloor uN\rfloor - k, \ldots, \lfloor uN\rfloor + k\}$ we infer that
\begin{align*}
	\Phi_{\eta_t^{k, u}}(\zeta)
	& \;:=\;
	\mathbb{E}_{\mu_N}\bigg[\exp\Big\{-\sum_{x=\lfloor uN\rfloor - k}^{\lfloor uN\rfloor + k} \zeta(x)\eta_t(x)\Big\}\bigg]\\
	& \;=\;  \exp\bigg[\sum_{x=\lfloor uN\rfloor - k}^{\lfloor uN\rfloor + k}
	\big\{e^{-\zeta(x)}-1\big\}\,\psi^N_t(x)
	\bigg].
\end{align*}
Since the convergence of the Laplace transform characterizes the weak convergence of probability measures concentrated on  Cartesian products of non-negative half-lines, our proof reduces to characterize the limit of $\psi^N_t(x)$ defined in \eqref{psi_def}, for $ x \in \{\lfloor uN\rfloor - k, \ldots, \lfloor uN\rfloor + k\}$ as $N\to \infty$. 
Recalling  the Poisson parameters~\eqref{Poisson} and the invariant measure~\eqref{invariant_measure} and having in mind that $x\neq 0$, one can check that 
\begin{equation}\label{eq:expectation}
	\psi^N_t(x) \;=\;  \sum_{y=0}^N  p_t(x,y)\gamma\big(\pfrac{y}{N}\big) \;=\; E \Big[ \gamma\Big(\frac{X_t^{x,1}}{N}\Big)\Big]\,,
\end{equation}
where $E$ is the expectation with respect to the random walk $X_t^{x,1}$ that starts at $x\neq 0$. This is the only step of the proof where the continuity of $\gamma$, assumed in Theorem~\ref{thm:propagation_local}, is used. Since $\gamma$ is a bounded continuous function, by Proposition~\ref{prop:RW} and Portmanteau's Theorem,  for any $x\in \{\lfloor uN\rfloor - k, \ldots, \lfloor uN\rfloor + k\}$,
\begin{equation*}
	E \Big[ \gamma\Big(\frac{X_t^{x}}{N}\Big)\Big] \longrightarrow P_t^\theta \gamma(u)\quad \text{ as }N\to\infty\,,
\end{equation*}
where $P^\theta_t$ is, as explained in Proposition~\ref{prop:RW}:\smallskip
\begin{itemize}
	\item if $\theta\in [0,1)$, the semigroup of  a  Brownian motion on $[0,1]$ reflected both at $0$ and $1$.  \smallskip
	\item if $\theta = 1$, the semigroup of a Brownian motion on $[0,1]$ sticky at $0$ and reflected at $1$.\smallskip
	\item if $\theta \in (1,\infty)$, the semigroup of a Brownian motion on $[0,1]$ absorbed at $0$ and reflected at $1$.\smallskip
\end{itemize}
For the three semigroups above, it is true that $P_t^\theta f\in \mf D(\bb L_\theta)$ for any time $t>0$ and for any $f\in C[0,1]$. 
This implies that $P_t^\theta \gamma(u)$ satisfies the boundary conditions of \eqref{eq_1}, 
\eqref{eq_2} or \eqref{eq_3} if, respectively, $\theta\in [0,1)$, $\theta = 1$ or $\theta \in (1,\infty)$. Moreover, since in any case the process is a Brownian motion in $[0,1]$, the Feller semigroup $P_t^\theta \gamma(u)$ satisfies the heat equation with initial condition $\gamma$. This concludes the proof. 
\end{proof}

\begin{remark}\label{rmk:rmk3.3}
Although  the  Poisson parameter  at zero was used in the proof above in \eqref{eq:expectation}, we did not explicitly study the behavior at  the site zero. For the sake of completeness, see that 
\begin{equation*}
	\psi^N_t(0) \;=\; \pi(0)\sum_{y=0}^N  p_t(0,y) \,\frac{\lambda(y)}{\pi(y)} \;=\; \frac{N^\theta}{\alpha}   E \Big[ \gamma\Big(\frac{X_t^{0,1}}{N}\Big)\Big]\,,
\end{equation*}
which explodes in the same order as  $\frac{N^\theta}{\alpha} P^\theta_t\gamma(0)$.
\end{remark}

\section{Proof of hydrodynamics for IRW}\label{s4}
\subsection{Reversible measure}
We start by proving Proposition~\ref{prop:reversible_RW} which says that $\nu_\lambda$ is reversible for IRW. 
\begin{proof}[Proof of Proposition~\ref{prop:reversible_RW}]
The statement is equivalent to the identity
\begin{equation}\label{eq:reversible_rw_L}
	\int g(\eta) \msf{L}_N f(\eta) \,d\nu_\lambda(\eta) \;=\; \int f(\eta) \msf{L}_N g(\eta) \,d\nu_\lambda(\eta)  
\end{equation}
for every $f, g :\Omega_N\to \bb R$, see \cite{kl}.
It suffices to verify \eqref{eq:reversible_rw_L} for the bond $\{0,1\}$ since the dynamics over the remaining bonds is the same as of symmetric IRW, for which a product Poisson measure of constant parameter is known to be reversible, see \cite{kl}.
  
Recall \eqref{nu_lambda}. For ease of notation, denote $\psi_0= \frac{N^\theta}{\alpha}\lambda$ and $\psi_1=\lambda$, so
\[
\nu_\lambda \;=\;\mathrm{Poisson}(\psi_0) \otimes\bigotimes_{x=1}^N \mathrm{Poisson}(\psi_1)\,.
\]
Let $\xi$ be the configuration obtained from $\eta$ by moving one particle from the site $0$ to the site $1$, if possible. That is, $\xi=\eta^{0,1}$ as in \eqref{eq:eta_xy_rw}.
Thus,  assuming that $k\geq 1$, we have $\eta(0) = k$ and $\eta(1) = \ell $ if, and only if, $\xi(0) = k-1$ and $\xi(1) = \ell +1$. 
On the other hand,  $\eta(0) = 0$ if, and only if, $\eta = \xi$. 
The Radon-Nikodym derivative of this transformation on the set $A=\{\eta:\eta(0)\geq 1\}$ is given by
\begin{equation*}
	\frac{\nu_\lambda(\eta)}{\nu_\lambda(\xi)} = \frac{e^{-\psi_0} \psi_0^{k} }{k!} \cdot \frac{e^{-\psi_1}\psi_1^{\ell } }{\ell!} \cdot \frac{(k-1)!}{e^{-\psi_0}\psi_0^{k-1} } \cdot \frac{(\ell +1)!}{ e^{-\psi_1}\psi_1^{\ell +1}} = \frac{\psi_0}{\psi_1} \cdot \frac{\ell +1}{k} = \frac{\psi_0}{\psi_1} \cdot \frac{\xi(1)}{\xi(0)+1}\,.
\end{equation*}

We compute the contribution of the jump $0\to 1$ by performing the change of variables $\xi = \eta^{0,1}$ as follows. Note that the integrand in the first integral below vanishes in the set $\{\eta:\eta(0)=0\}$. Then the contribution of that jump to the left-hand side of \eqref{eq:reversible_rw_L} is equal to
\begin{align*}
	& \int g(\eta) \frac{\alpha}{N^\theta}\eta(0) \Big[f(\eta^{0,1}) - f(\eta)\Big] d\nu_\lambda(\eta) \\
	& = \sum_{k\geq 1} \sum_{\substack{\eta \in \Omega_N:\\\eta(0) = k}} g(\eta) \frac{\alpha}{N^\theta}\eta(0)\Big[f(\eta^{0,1}) - f(\eta)\Big] \nu_\lambda(\eta)\\
	& = \sum_{k\geq 1} \sum_{\substack{\xi \in \Omega_N:\\ \xi(0) = k-1}} g(\xi^{1,0}) \frac{\alpha}{N^\theta} (\xi(0)+1) \Big[f(\xi) - f(\xi^{1,0})\Big] \frac{\nu_\lambda(\eta)}{\nu_\lambda(\xi)} \nu_\lambda(\xi)\\
	& = \sum_{k\geq 1} \sum_{\substack{\xi \in \Omega_N:\\ \xi(0) = k-1}} g(\xi^{1,0}) \frac{\alpha}{N^\theta}(\xi(0)+1) \Big[f(\xi) - f(\xi^{1,0})\Big] \frac{\psi_0}{\psi_1}\frac{\xi(1)}{(\xi(0)+1)} \nu_\lambda(\xi) \\
	& = \sum_{k\geq 0} \sum_{\substack{\xi \in \Omega_N:\\ \xi(0) = k}} \frac{\alpha}{N^\theta}\,\frac{\psi_0}{\psi_1} g(\xi^{1,0}) \xi(1) \Big[f(\xi) - f(\xi^{1,0})\Big] \nu_\lambda(\xi) \\
	& = \int \frac{\alpha}{N^\theta}\,\frac{\psi_0}{\psi_1} g(\xi^{1,0}) \xi(1) \Big[f(\xi) - f(\xi^{1,0})\Big] d\nu_\lambda(\xi)\,.
\end{align*}
Now we can check the detailed balance condition related to the bond $\{0,1\}$, i.e. :
\begin{align*}
	& \int g(\eta) \frac{\alpha}{N^\theta}\eta(0) \big[f(\eta^{0,1}) - f(\eta)\big] d\nu_\lambda + \int g(\eta) \eta(1) \big[f(\eta^{1,0}) - f(\eta)\big] d\nu_\lambda \\
	& - \bigg{[} \int f(\eta) \frac{\alpha}{N^\theta}\eta(0) [g(\eta^{0,1}) - g(\eta)] d\nu_\lambda + \int f(\eta) \eta(1) \big[g(\eta^{1,0}) - g(\eta)\big] d\nu_\lambda \bigg{]}\\
	& = \int \frac{\alpha}{N^\theta}\,\frac{\psi_0}{\psi_1} g(\eta^{1,0}) \eta(1) \big[f(\eta) - f(\eta^{1,0})\big] d\nu_\lambda + \int g(\eta) \eta(1) \big[f(\eta^{1,0}) - f(\eta)\big] d\nu_\lambda \\
	& -  \int \frac{\alpha}{N^\theta}\,\frac{\psi_0}{\psi_1} f(\eta^{1,0}) \eta(1) \big[g(\eta) - g(\eta^{1,0})\big] d\nu_\lambda - \int f(\eta) \eta(1) \big[g(\eta^{1,0}) - g(\eta)\big] d\nu_\lambda\,, 
\end{align*}
which vanishes since  $\psi_0 = \frac{N^\theta}{\alpha}\,\psi_1$, concluding the proof.
\end{proof}

\subsection{Scaling limit}\label{sec:scaling_limit}
Let  $\{\pi^N_t:t\in [0,T]\}$ be the \textit{empirical measure} defined by  
\begin{equation}\label{empirical}
\pi^N_t \;=\; \frac{1}{N}\sum_{x=1}^N \eta_t(x) \delta_{\frac{x}{N}}\,,
\end{equation}
which characterizes the spatial density of the particles of the process as embedded in the interval $[0,1]$. 
Note that  the empirical measure \textit{does not} include the zero site, and it is a random element in the Skorokhod space $D([0,T]; \mc M)$ of càdlàg trajectories, where $\mc M$ is the space of finite non-negative measures on $[0,1]$. The base space is kept as $[0,1]$, even though $\pi^N_t$ does not give mass to the zero site, since the limiting measure is $\rho(t,u)du$ on $[0,1]$ and the test functions are taken in $C[0,1]$.

Let $\mu_N$ be the Poisson product measure introduced in \eqref{mu_N}, and let  $\bb Q_{\mu_N}^{\theta,N}$ be the measure in the path space $D([0,T]; \mc M)$ induced by the measure  and 
the empirical measure $\pi^N_t$ introduced in \eqref{empirical} above. Recall that $\mu_N$ is defined in terms of a fixed continuous profile $\gamma:[0,1]\to [0,\infty)$.
\begin{definition}\label{def:Qtheta}
Let $\bb Q^{\theta}$ be
the probability measure in $D([0,T]; \mc M)$ concentrated on the
deterministic path $\pi(t,du) = \rho (t,u)du$, where $\rho (t,u)$ is:
\begin{itemize}
	\item if $\theta\in [0,1)$, the unique strong solution to the  heat equation   \eqref{eq_1}.\smallskip
	
	\item if $\theta =  1$, the unique  strong solution to the heat equation  \eqref{eq_2}.\smallskip
	
	\item if $\theta\in (1,\infty)$, the unique strong solution to the heat equation   \eqref{eq_3}.
\end{itemize}
\end{definition}
Since the propagation of local equilibrium implies the hydrodynamic limit, as a consequence of Theorem~\ref{thm:propagation_local} we have the following:
\begin{corollary}\label{cor:local}
As $N\uparrow\infty$, the sequence of probability measures $\{\bb
Q_{\mu_N}^{\theta,N}:N\geq{1}\}$ converges weakly to $\bb Q^{\theta}$.
\end{corollary}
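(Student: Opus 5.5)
We deduce Corollary~\ref{cor:local} from Theorem~\ref{thm:propagation_local} by proving a law of large numbers for each fixed time and then upgrading to convergence in the path space.

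\textbf{Step 1: fixed-time law of large numbers.} Fix $t\in[0,T]$ and $H\in C[0,1]$. By \eqref{Laplace_transform}, $\eta_t$ is distributed as a Poisson product measure with parameters $\psi^N_t(x)$. By \eqref{eq:expectation}, for $x\ge1$ we have $\psi^N_t(x)=E[\gamma(X^{x,1}_t/N)]$.

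Two consequences follow:
\begin{itemize}
\item $\psi^N_t(x)\le\|\gamma\|_\infty$ uniformly in $x,N,t$.
\item $\psi^N_t(\lfloor uN\rfloor)\to P^\theta_t\gamma(u)=\rho(t,u)$ for every $u\in(0,1)$, by Proposition~\ref{prop:RW}.
\end{itemize}

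Independence then gives
\[
\mathrm{Var}\big(\langle\pi^N_t,H\rangle\big)=N^{-2}\sum_x H(x/N)^2\psi^N_t(x)\lesssim N^{-1}.
\]
Moreover $E\langle\pi^N_t,H\rangle=\int_0^1 H(\lfloor uN\rfloor/N)\,\psi^N_t(\lceil uN\rceil)\,du+o(1)$, which converges to $\int H\rho_t$ by dominated convergence. Chebyshev's inequality then yields $\langle\pi^N_t,H\rangle\to\langle\rho_t,H\rangle$ in probability.

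For several times $t_1,\dots,t_m$ the same argument gives convergence of the finite-dimensional distributions to the deterministic values. In the proof, $\rho$ is identified with $P^\theta_t\gamma$, the strong solution described in the proof of Theorem~\ref{thm:propagation_local}.

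\textbf{Step 2: tightness in $D([0,T];\mc M)$.} Since $\mc M$ is equipped with the weak topology, it suffices by Mitoma/Aldous-type criteria to prove tightness of $\langle\pi^N_t,H\rangle$ for $H$ in a dense countable family. The mass $\langle\pi^N_t,1\rangle$ is controlled because the total number of particles is conserved. The mass in the bulk is bounded by the total, whose expectation is $N^\theta\gamma(0)/\alpha+\sum_x\gamma(x/N)$, and this is a problem when $\theta>1$.

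It is cleaner to avoid generator bounds and use monotonicity in time together with Step 1. Take $H\in C^2$ with compact support in $(0,1)$. Dynkin's formula gives a martingale with
\[
\langle\pi^N_t,H\rangle-\langle\pi^N_0,H\rangle-\int_0^t N^{-1}\sum_{x}\eta_s(x)\Delta_N H(x/N)\,ds .
\]
Because $H$ vanishes near $0$, the integral term involves only bulk sites. Its expectation is bounded uniformly by $\|H''\|_\infty\|\gamma\|_\infty$, and its quadratic variation is $O(N^{-1})$. Aldous' criterion then follows from the bound $E\sup_s N^{-1}\sum_x\eta_s(x)\lesssim$ on compact support, which we obtain by bounding the integral term in $L^1$ using $E\,\eta_s(x)=\psi^N_s(x)\le\|\gamma\|_\infty$.

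Functions with compact support in $(0,1)$ suffice because the total mass $\langle\pi^N_t,1\rangle$ is tight: its expectation is at most $\|\gamma\|_\infty$. The limit measures are then absolutely continuous, so no mass concentrates at the endpoints, and the limit point of $\pi^N_t$ is determined by its action on such $H$.

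\textbf{Step 3: conclusion.} Every limit point is concentrated on paths whose fixed-time marginals coincide with $\rho_t(u)\,du$ (Step 1). Since these paths are right-continuous, the limit point equals $\bb Q^\theta$. Continuity of $t\mapsto\rho_t$ guarantees that the limit path is continuous, so fixed-time convergence is consistent with Skorokhod convergence.

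\textbf{Main obstacle.} The main obstacle is Step 2 near the boundary $0$. For $\theta\ge1$, the reservoir holds $N^\theta$ particles, and test functions with $H(0)\ne0$ produce boundary terms of order $N^{1-\theta}\eta(0)H'(0)$-type that are not uniformly small. I would handle this by the compact-support restriction combined with total-mass tightness, as outlined above. Alternatively one can cite \cite[Chapter~3, Proposition~0.4]{kl} directly, checking that its hypotheses hold with the uniform bound $\psi^N_t\le\|\gamma\|_\infty$.
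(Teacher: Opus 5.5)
Your Step 1 is correct, and it is in substance the paper's whole argument. The paper obtains the corollary in one line from Theorem~\ref{thm:propagation_local} by invoking \cite[Chapter~3, Proposition~0.4]{kl}, i.e.\ local equilibrium implies the law of large numbers for $\langle\pi^N_t,H\rangle$ at each fixed $t$. It gives no separate tightness argument for IRW. Your direct version is a clean, self-contained replacement for that citation: $\eta_t$ is Poisson product with parameters $\psi^N_t(x)\le\|\gamma\|_\infty$ for $x\ge1$, so the variance is $O(1/N)$, and the mean converges by pointwise convergence of $\psi^N_t(\lfloor uN\rfloor)$ plus dominated convergence. This fixed-time statement, together with dominated convergence in time for the integral terms, is all that Subsection~\ref{sub:limit} actually needs.

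The gap is in Step 2, the upgrade to weak convergence on $D([0,T];\mathcal M)$.

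\begin{itemize}
\item Since $\mu\mapsto\langle\mu,1\rangle$ is continuous on $\mathcal M$, tightness there forces the bulk-mass process $t\mapsto\langle\pi^N_t,1\rangle$ to be tight in $D([0,T];\mathbb R)$. In particular $\sup_{t\le T}\langle\pi^N_t,1\rangle$ must be tight.
\item Tightness of $\langle\pi^N,H\rangle$ for $H\in C^2_c(0,1)$ cannot replace this. That family, even with constants added, does not separate points of $\mathcal M$: it does not distinguish $\delta_0$ from $\delta_1$.
\item Your substitute, $E\langle\pi^N_t,1\rangle\le\|\gamma\|_\infty$ for each fixed $t$, is a one-time bound. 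It says nothing about $\sup_t$. It cannot exclude short-lived accumulations of mass near the origin, which are invisible to fixed-time marginals and to absolute continuity of $\rho_t$, but destroy Skorokhod tightness.
\item The sentence ``the limit measures are absolutely continuous, so no mass concentrates at the endpoints'' presupposes that limit points in $D([0,T];\mathcal M)$ exist, so it is circular.
\item A milder version of the same confusion is in your Aldous step: $E\eta_s(x)\le\|\gamma\|_\infty$ at fixed $s$ does not bound $E\sup_s$. For $H\in C^2_c(0,1)$ this is repaired by Cauchy--Schwarz in time plus domination by the stationary process started from $\widetilde\mu_N$, as in Subsection~\ref{sub:limit}.
\end{itemize}

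What is missing is a maximal-in-time estimate at the left boundary. One way to supply it is in two parts.

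First, use test functions $H\in C^2[0,1]$ with $H(0)=0$, arbitrary near $1$. The only boundary term is then $\frac{\alpha\eta_s(0)}{N^\theta}N[H(\frac1N)-H(0)]$. It is controlled in $L^1$ by conservation of mass when $\theta\ge1$, and by domination plus Cauchy--Schwarz when $\theta<1$. So Aldous' criterion applies to this class.

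Second, since this class still cannot see mass at the origin, complement it with
\[
\lim_{\varepsilon\to0}\,\limsup_{N\to\infty}\,\mathbb P\Big(\sup_{t\le T}\frac1N\sum_{x=1}^{\varepsilon N}\eta_t(x)>\delta\Big)\;=\;0\qquad\text{for every }\delta>0 .
\]
This follows from four ingredients:
\begin{itemize}
\item the pathwise coupling $\eta_t\le\widetilde\eta_t$, for all $t$ simultaneously, with the stationary system started from $\widetilde\mu_N$;
\item Poisson large-deviation bounds at the points of a polynomially fine time grid;
\item a union bound over the grid;
\item the observation that, with high probability, only few particles enter $\{1,\dots,\varepsilon N\}$ between consecutive grid times, because all jump rates are polynomial in $N$.
\end{itemize}

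If you only want the fixed-time statement, drop Step 2: Step 1 alone already matches the paper's argument.
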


\subsection{A second characterization of \texorpdfstring{$\bb Q^\theta$}{Q theta} for \texorpdfstring{$\theta=1$}{theta=1} and \texorpdfstring{$\theta\in (1,\infty)$}{theta>1}}\label{sub:limit}

By Corollary~\ref{cor:local}, we know that, as $N\to\infty$, the sequence of measures $\{\bb Q^{\theta,N}_{\mu_N}: N\geq{1}\}$ converges weakly to  $\bb Q^\theta$, which is a delta of Dirac on the trajectory $\pi(t,du) = \rho(t,u) du$, whose density
$\rho(t,u)$ is a weak solution to \eqref{eq_1}, \eqref{eq_2} or \eqref{eq_3} according to $\theta\in[0,1)$, $\theta=1$ or $\theta\in(1,\infty)$, respectively. 

Our task now is to give a second characterization of $\bb Q^\theta$ for $\theta=1$ and $\theta\in(1,\infty)$ via the convergence of Dynkin's martingale.
Namely, we will show that, for $\theta =1$, the density $\rho(t,u)$ is (also) the unique  weak solution to \eqref{weak_solution_non_local} and, for $\theta >1$,  it  is (also)  the unique  weak solution to \eqref{weak_solution_Dirichlet}, providing a rigorous proof of what was already discussed in the heuristic arguments given in Remarks~\ref{remark:2.8} and~\ref{remark:2.9}. 

A second characterization of $\bb Q^\theta$ for $\theta\in[0,1)$  is  omitted because our approach would lead to the same hydrodynamic equation already found in the Theorem~\ref{thm:propagation_local}  (the heat equation with Neumann boundary conditions at both $0$ and $1$), thus  providing no extra information.

Assume for the moment the uniqueness of weak solutions of the PDE \eqref{weak_solution_non_local} and \eqref{weak_solution_Dirichlet}, which is the topic of the next subsection.
Recall \eqref{generator_rw}. 
For any function $H$, the process
\begin{equation}\label{Martingal}
M^{N}_{t}(H)\;:=\;\<\pi^{N}_{t}, H\>- \<\pi^{N}_{0}, H\>-\int_{0}^{t}\msf L_{N}\<\pi^{N}_{s},H\>\,ds
\end{equation}
is a martingale with respect to the natural filtration $\mathcal{F}_t:=\sigma(\eta_s: s\leq{t})$, the so-called Dynkin's martingale, see \cite{kl}. 
Assume that $H\in C^2[0,1]$ and $H(0)=H'(1)=0$.
By the carré-du-champ formula, the quadratic variation of the martingale $M^{N}_{t}(H)$  is given by
\begin{equation}\label{eq:quadratic}
\begin{split}
	\<M^{N}(H)\>_t \;=\; &  \int_{0}^{t} \Big(\frac{\alpha}{N^{\theta}} \eta_s(0) + \eta_s(1)\Big)\Big[H(\pfrac{1}{N})-H(\pfrac{0}{N})\Big]^2ds \\
	&+\int_{0}^{t}\sum_{x=1}^{N-1}  \big(\eta_{s}(x)+\eta_{s}(x+1)\big)
	\Big[H(\pfrac{x+1}{N})-H(\pfrac{x}{N})\Big]^2 ds\,.
\end{split}
\end{equation}
Consider the partial order $\lesssim$ in $\Omega_N$ defined by
\begin{equation}\label{partial}
\eta_1 \lesssim \eta_2 \quad\Leftrightarrow\quad \eta_1(x)\leq \eta_2(x),\, \forall \,x=0,1,\ldots, N.
\end{equation}
The initial measure $\mu_N$ defined in \eqref{mu_N} is stochastically dominated by the product measure
\begin{equation}\label{mu_tilde}
\widetilde{\mu}_N \;:=\; \mathrm{Poisson}\Big(\frac{N^\theta}{\alpha} \cdot \Vert \gamma\Vert_\infty\Big)\otimes \bigotimes_{x=1}^N\mathrm{Poisson}\big(\Vert \gamma\Vert_\infty\big)\,,
\end{equation}
which, by Proposition~\ref{prop:reversible_RW}, is an invariant measure. IRW is an attractive system with respect to \eqref{partial}, so this partial order is preserved in time, see \cite[Chapter 2]{Liggett}. Since the quadratic variation \eqref{eq:quadratic} is an increasing function with respect to the partial order \eqref{partial}, we have that
\begin{align*}
\bb E_{\mu_N}^{\theta, N}\Big[\<M^{N}(H)\>_t \Big]& \;\leq\;  \bb E_{\widetilde{\mu}_N}^{\theta, N}\Big[\<M^{N}(H)\>_t \Big]\\
&\;=\;  \bb E_{\widetilde{\mu}_N}^{\theta, N}\bigg[ \int_{0}^{t} \Big(\frac{\alpha}{N^{\theta}} \eta_s(0) + \eta_s(1)\Big)\Big[H(\pfrac{1}{N})-H(\pfrac{0}{N})\Big]^2ds \\
&\qquad\qquad+\int_{0}^{t}\sum_{x=1}^{N-1}  \big(\eta_{s}(x)+\eta_{s}(x+1)\big)
\Big[H(\pfrac{x+1}{N})-H(\pfrac{x}{N})\Big]^2 ds\bigg]\\
&\;\leq\;  \frac{2t}{N}\|\partial_u H\|_{\infty}^2 \cdot \Vert \gamma\Vert_\infty \,,
\end{align*}
which goes to zero as $N\to\infty$ in view of $H\in C^2[0,1]$. Since $\big(M^{N}_{t}(H)\big)^2 - \<M^{N}(H)\>_t$ is a zero mean martingale (see for instance \cite[Appendix 1]{kl}), by Doob's inequality, for every $\delta>0$,
\begin{equation*}
\lim_{N\rightarrow\infty}\bb P_{\mu_N}^{\theta, N}\bigg[\sup_{0\leq t\leq T} |M^{N}_{t}(H)|>\delta\bigg]\;=\;0\,,
\end{equation*}
hence the sequence of martingales $\{M^{N}_{t}(H):t\in [0,T]\}_{N\geq 1}$ converges in distribution to zero in the Skorokhod topology. Our goal now is to understand the limit of each term in the martingale \eqref{Martingal} aiming to achieve in the limit  the integral equation \eqref{integral_weak_solution_non_local} for $\theta=1$, or the integral equation \eqref{integral_weak_solution_Dirichlet_homogeneous} for $\theta\in(1,\infty)$.

The definition \eqref{mu_N} of $\mu_N$ implies that the sequence of time constant processes $\{\<\pi^{N}_{0}, H\>:t\in[0,T]\}_{N\geq 1}$  converges in distribution to the time-constant process\break $\{\<\gamma, H\>:t\in [0,T]\}$. Moreover, Corollary~\ref{cor:local} implies that the sequence of process 
$\{\<\pi^{N}_{t}, H\>:t\in[0,T]\}_{N\geq 1}$ converges in distribution to  $\{\<\rho_{t}, H\>:t\in[0,T]\}$.
Thus, it only remains to study the integral term in~\eqref{Martingal}. Performing elementary calculations, 
\begin{align*}
\msf{L}_N \<{\pi}_s^N, H\> \;=\; & N \cdot H\big(\pfrac{1}{N}\big)\Big[\frac{\alpha}{N^\theta}\eta(0) - 2\eta(1) + \eta(2)\Big]\\
&+ N\cdot \sum_{x=2}^{N-1} {H}\big(\pfrac{x}{N}\big)\big[\eta_s(x-1) - 2\eta_s(x) + \eta_s(x+1)\big]\\
&+ N \cdot H\big(\pfrac{N}{N}\big)\big[-\eta_s(N) + \eta_s(N-1) \big]\,. 
\end{align*}
Since $H(0)=0$, this can be rewritten as
\begin{align}
&\frac{\alpha \eta_s(0)}{N^\theta}\cdot N\Big[ H\big(\pfrac{1}{N}\big) - H\big(\pfrac{0}{N}\big)\Big] \label{eq:int_1}\\ 
&+ \eta_s(1)\cdot N\Big[H\big(\pfrac{0}{N}\big) - 2\,H\big(\pfrac{1}{N}\big) + H\big(\pfrac{2}{N}\big) \Big] \label{eq:int_2}\\[0.25em]
&+ \frac{1}{N}\sum_{x=2}^{N-1} \eta_s(x)\cdot N^2\Big[H\big(\pfrac{x+1}{N}\big)+H\big(\pfrac{x-1}{N}\big) - 2H\big(\pfrac{x}{N}\big)  \Big] \label{eq:int_3}\\
& + \eta_s(N)\cdot N\Big[H\big(\pfrac{N-1}{N}\big) - H\big(\pfrac{N}{N}\big) \Big]\,. \label{eq:int_4}  
\end{align}
 Keep in mind that the attractiveness property discussed at \eqref{partial} and \eqref{mu_tilde} assures domination. Since $H'(1)=0$, the Dominated Convergence Theorem guarantees that the integral in time from $0$ to $t$ of \eqref{eq:int_4} converges to zero as $N\to\infty$.
By a similar argument, the sequence of processes $\{\frac{1}{N}\eta_t(N):t\in [0,T]\}_{N\geq 1}$ converges to zero in probability. Thus, since $H\in C^2[0,1]$, applying the Corollary~\ref{cor:local}, the integral in time of \eqref{eq:int_2} plus \eqref{eq:int_3} converges in distribution (thus in probability) to the constant
\begin{equation*}
\int_0^t \<\rho_s, H''\> \,ds\,.
\end{equation*}
Since the Markov process $\{\eta_t: t\in [0,T]\}$ on $\Omega_N$ is conservative, we can rewrite the expression \eqref{eq:int_1} as
\begin{align*}
&  \alpha N\Big[ H\big(\pfrac{1}{N}\big)  - H\big(\pfrac{0}{N}\big)\Big]\Bigg[\frac{1}{N^\theta} \sum_{x=0}^N \eta_s(x) - \frac{1}{N^\theta} \sum_{x=1}^N \eta_s(x) \Bigg] \\
& \;=\;  \alpha N\Big[ H\big(\pfrac{1}{N}\big)  - H\big(\pfrac{0}{N}\big)\Big]\Bigg[\frac{1}{N^\theta} \sum_{x=0}^N \eta_0(x) - \frac{1}{N^\theta} \sum_{x=1}^N \eta_s(x) \Bigg].
\end{align*}
From the definition \eqref{mu_N} of $\mu_N$ and an application of the law of large numbers for triangular arrays (see \cite{Durrett} for instance), we are led to 
\begin{equation}\label{eq:massa_total}
\frac{1}{N^\theta} \sum_{x=0}^N \eta_0(x) \;\longrightarrow\; 
\begin{cases}
	\displaystyle\frac{\gamma(0)}{\alpha}+ \int_0^1 \gamma(u)du \;=:\; M, & \text{ if } \theta=1,\\
	\displaystyle\frac{\gamma(0)}{\alpha}, & \text{ if } \theta\in(1,\infty)\\
\end{cases}
\end{equation}
in probability as $N\to\infty$. From the Corollary~\ref{cor:local},
we have the following convergence (of processes)
\begin{equation}\label{eq:massa_bulk}
\frac{1}{N^\theta} \sum_{x=1}^N \eta_s(x) = N^{1-\theta}\<\pi^N_s, 1\> \longrightarrow
\begin{cases}
	\<\rho_s, 1\> =    \int_0^1 \rho_s(u)du, & \text{if } \theta=1,\\
	0, & \text{if } \theta\in(1,\infty)
\end{cases}
\end{equation}
in probability as $N\to\infty$, for every $s\in[0,T]$. By \eqref{eq:massa_total} and \eqref{eq:massa_bulk}, we deduce that
\eqref{eq:int_1} converges in probability to
\begin{equation*}
\alpha H'(0) \times  \begin{cases}
	\displaystyle M - \int_0^1 \rho_s(u)du, & \text{ if } \theta=1,\\
	\displaystyle\frac{\gamma(0)}{\alpha}, & \text{ if } \theta\in(1,\infty).
\end{cases}
\end{equation*}
Putting together the convergences of \eqref{eq:int_1}, \eqref{eq:int_2}, \eqref{eq:int_3} and \eqref{eq:int_4},
we get 
\begin{equation*}
\int_0^t\! \msf{L}_N \<{\pi}_s^N, H\> ds \to 
\int_0^t\! \<\rho_s, H''\> ds + \alpha H'(0)\! \! \int_0^t\!\! ds \begin{cases}
	M - \int_0^1 \rho_s(u)du, & \text{if } \theta=1\\
	\frac{\gamma(0)}{\alpha}, & \text{if } \theta\in(1,\infty)\\
\end{cases}
\end{equation*}
in probability. We have therefore obtained that, for any $H\in C^2[0,1]$ such that $H(0)=H'(1)=0$ and for $\theta=1$, it holds
\begin{equation*}
\big\<\rho_t, H\big\> - \big\<\gamma, H\big\>  \;=\;  \int_0^t \big\< \rho_s,  \p^2_{uu}  H \big\>\, ds  + \int_0^t \alpha \bigg(M- \int_0^1\rho_s(u)du\bigg)  H'(0) \, ds\,;
\end{equation*}
while, for $\theta\in (1,\infty)$, it holds
\begin{equation*}
\big\<\rho_t, H\big\> - \big\<\gamma, H\big\>  \;=\;  \int_0^t \big\< \rho_s,  \p^2_{uu}  H \big\>\, ds  + \int_0^t  \gamma(0)  H'(0) \, ds\,.
\end{equation*}
These are, respectively, the integral equations \eqref{integral_weak_solution_non_local} and \eqref{integral_weak_solution_Dirichlet_homogeneous}. This result, together with the uniqueness of weak solutions (to be proved in the next subsection),  characterizes  $\rho(t,u)$ for $\theta=1$ or  $\theta\in(1,\infty)$ as the unique weak solution to \eqref{weak_solution_non_local} or to \eqref{weak_solution_Dirichlet}, respectively, and henceforth concludes  the proof of Theorem~\ref{thm:hydrolimit_rws}.

\subsection{Uniqueness of weak solutions}\label{sub:uniqueness}

In this subsection we prove  uniqueness of weak solutions to    \eqref{weak_solution_non_local} and \eqref{weak_solution_Dirichlet}.
Let $\Psi_k:[0,1]\to \bb R$ be given by
\begin{equation}\label{Psi_k}
\Psi_k(u) \;:=\; \sqrt{2}\sin\Big(\pi\big(k+\pfrac{1}{2}\big)  u\Big),\qquad k=0,1,2,\ldots
\end{equation}
These functions are the solutions of the following Sturm-Liouville problem associated to  the Laplacian operator with Dirichlet and Neumann boundary conditions
\begin{equation}\label{Liouville}
\begin{cases}
	-f''(u) = \lambda f(u), & \text{ for } u\in (0,1),\\
	f(0) = f'(1) = 0, 
\end{cases}
\end{equation}
having $\lambda_k= \pi^2\big(k+\pfrac{1}{2}\big)^2$ as  the $k^{\text{th}}$-eigenvalue. Moreover, the set $\{ \Psi_k\}_{k\geq 0}$ is an orthonormal complete basis of $L^2[0,1]$, see \cite{Titchmarsh}, for instance, on the subject.
We start below showing uniqueness of weak solutions to \eqref{weak_solution_non_local}.

\begin{proof}[Proof of Proposition~\ref{prop:uniqueness_2.9}]
Let $\rho^1$ and $\rho^2$ be weak solutions of \eqref{weak_solution_non_local}. For  $\xi=\rho^1-\rho^2$, it holds
\begin{equation*}
\<\xi_t, H\>  \;=\; \int_{0}^{t} \<\xi_s, H''\>\, ds - \alpha H'(0) \int_0^t  \<\xi_s, 1 \> \, ds \,,
\end{equation*}
for any $H\in C^2[0,1]$ such that $H(0)=H'(1)=0$. In particular, 
\begin{equation}\label{eq:derivative}
\<\xi_t, \Psi_k\>  \;=\;- \lambda_k \int_{0}^{t} \<\xi_s, \Psi_k\>\, ds - \alpha \Psi'_k(0) \int_0^t  \<\xi_s, 1 \> \, ds\,. 
\end{equation}
Consider the energy functional 
\begin{equation}
\mc E(t)\;=\;\sum_{k\geq 0}\frac{\< \xi_t , \Psi_k\>^2}{2\lambda_k}\,,
\end{equation}
which by \eqref{eq:derivative} satisfies
\begin{equation}\label{eq:energy_functional}
\mc E'(t) \;=\; - \sum_{k\geq 0} \Big[\<\xi_t, \Psi_k\>^2 +\frac{\alpha \Psi_k'(0)}{\lambda_k}\<\xi_t, \Psi_k\>\<\xi_t, 1\>\Big]. 
\end{equation}
The fact that $\{ \Psi_\ell\}_{\ell\geq 0}$ is  an orthonormal complete basis of $L^2[0,1]$ implies that $1 = \sum_{\ell\geq 0} \<1, \Psi_\ell\>\Psi_\ell$, and a simple calculation gives us that 
\[\<1, \Psi_\ell\> \;=\; \frac{\sqrt{2}}{\pi(\ell+\frac{1}{2})} \;=\; \frac{2}{\Psi_\ell'(0)}\,,\]
hence
\begin{equation}\label{eq:one}
1 \;=\; \sum_{\ell\geq 0} \frac{2}{\Psi_\ell'(0)} \Psi_\ell\,.
\end{equation}
Applying \eqref{eq:one} into \eqref{eq:energy_functional} yields
\begin{align*}
\mc E'(t) & \;=\; - \sum_{k\geq 0} \Big[\<\xi_t, \Psi_k\>^2 +\frac{\alpha \Psi_k'(0)}{\lambda_k}\<\xi_t, \Psi_k\>\sum_{\ell\geq 0} \frac{2}{\Psi_\ell'(0)} \<\xi_t,\Psi_\ell\>\Big]\\
& \;=\; - \sum_{k\geq 0} \<\xi_t, \Psi_k\>^2  
- \alpha\bigg(\sum_{k\geq 0}\frac{\Psi_k'(0)}{\lambda_k}\<\xi_t, \Psi_k\>\bigg)\bigg(\sum_{\ell\geq 0}\frac{2}{\Psi_\ell'(0)}\<\xi_t, \Psi_\ell\>\bigg).
\end{align*}
Since
\begin{equation}\label{eq:inverse_Psi}
\frac{2}{\Psi_\ell'(0)}\;=\;\frac{\Psi_\ell'(0)}{\lambda_\ell}  \,,
\end{equation}
we deduce that
\begin{align}
\mc E'(t)  & \;=\; - \Vert \xi_t\Vert_2^2  
- \alpha\bigg(\sum_{k\geq 0}\frac{\Psi_k'(0)}{\lambda_k}\<\xi_t, \Psi_k\>\bigg)^2 \label{eq:a1}\\
& \;=\; - \Vert \xi_t\Vert_2^2  
- \alpha\bigg\<\xi_t, \sum_{k\geq 0}\frac{\Psi_k'(0)}{\lambda_k}\Psi_k\bigg\>^2 \label{eq:a2}\\
& \;=\; - \Vert \xi_t\Vert_2^2  
- \alpha\<\xi_t, 1\>^2\leq 0\,.\label{eq:a3}
\end{align}
Thus, $\mc E(t) = 0$ for all $t\geq 0$, which in its turn implies that $\<\xi_t,\Psi_k\>=0$ for all $t\geq 0$ and $k$.
Since the set $\{ \Psi_k\}_{k\geq 0}$ is an orthonormal complete basis of $L^2[0,1]$, by Parseval's formula we have that $\Vert \xi_t\Vert_2 =0$ for all $t\geq 0$, ensuring the uniqueness of weak solutions 
 of \eqref{weak_solution_non_local}. 
\end{proof}

\begin{remark}
At first sight, it seems that \eqref{eq:a1} is already enough   to conclude the proof, whose right-hand side is non-positive. However, it was necessary to arrive at \eqref{eq:a3}  to assure that the series  \eqref{eq:energy_functional} is indeed convergent.
\end{remark}

\begin{proof}[Proof of Proposition~\ref{prop:unique_4.3}]
Let $\rho^1$ and $\rho^2$ be weak solutions of \eqref{weak_solution_Dirichlet}. For $\xi=\rho^1-\rho^2$, it holds
\begin{equation}\label{eq:unique}
\<\xi_t, H\>  \;=\; \int_{0}^{t} \<\xi_s, H''\>\, ds\,.
\end{equation}
As before, consider the energy functional  $\mc E(t) = \sum_{k\geq 0}\frac{\< \xi_t , \Psi_k\>^2}{2\lambda_k}$,
which satisfies  $\mc E(0)=0$ and $\mc E(t)\geq 0$ for all $t>0$. By \eqref{eq:unique} and the fact that $\Psi_k$ are the solutions of the Sturm-Liouville problem \eqref{Liouville},  we deduce that 
\begin{equation*}
\mc E'(t)\;=\; -\sum_{k\geq 0}\< \xi_t , \Psi_k\>^2 \;\leq\; 0\,.
\end{equation*}
leading to the  uniqueness of weak solutions to \eqref{weak_solution_Dirichlet}.
\end{proof}

\section{Proof of hydrodynamics for SEP}\label{s5}

\subsection{Scaling limit}
Let  $\{\pi^N_t:t\in [0,T]\}$ be the \textit{empirical measure} defined by  
\begin{equation}\label{empirical_ex}
\pi^N_t \;=\; \frac{1}{N}\sum_{x=0}^N \eta_t(x) \delta_{\frac{x}{N}}\,,
\end{equation}
which characterizes the spatial density of the particles of the process as embedded in the interval $[0,1]$. 
The empirical measure  is a random element in the Skorokhod space $D([0,T]; \mc M)$ of càdlàg trajectories, where $\mc M$ is the space of finite non-negative measures on $[0,1]$. 

Unlike what we did in Subsection~\ref{sec:scaling_limit}, the definition above of the empirical measure now \textit{includes} the site $x=0$.  Nevertheless,  the mass at $x=0$ (the reservoir) does not play any role in the limit of \eqref{empirical_ex}, as we shall see. For $\theta=1$ and $\theta\in(1,\infty)$, this is due to the fact that test functions are assumed to satisfy the boundary condition $H(0)=0$. And for $\theta\in [0,1)$, this is due to the fact that the number of particles in the reservoir is of order $N^\theta\ll N$. 
On the other hand, although the site zero does not contribute to the limit of \eqref{empirical_ex}, such a   definition is essential in our calculations, helping to prove the tightness and to characterize the limit.

Let $\mu_N$ be defined as in \eqref{mu_N_ex} in terms of a continuous profile $\gamma:[0,1]\to [0,1]$. Let  $\bb Q_{\mu_N}^{\theta,N}$ be the measure on the path space $D([0,T]; \mc M)$ induced by the initial measure $\mu_N$  and 
the empirical measure $\pi^N_t$ introduced in \eqref{empirical_ex} above. 
\begin{definition}\label{def:Qtheta_ex}
Let $\bb Q^{\theta}$ be
the probability measure on $D([0,T]; \mc M)$ concentrated on the
deterministic path $\pi(t,du) = \rho (t,u)du$, where $\rho(t,u)$ is:
\begin{itemize}
	\item if $\theta\in [0,1)$, the unique weak solution to the heat equation   \eqref{heat_Neumann}.
    \smallskip
	
	\item if $\theta =  1$, the unique weak solution to the heat equation   \eqref{non_linear_Dirichlet}.
    \smallskip
	
	\item if $\theta\in (1,\infty)$, the unique weak solution to the heat equation  \eqref{heat_dirichlet}.

\end{itemize}
\end{definition}

Uniqueness of weak solutions is postponed to Subsection~\ref{sub:uniqueness_SEP}. The Theorem~\ref{thm:hydro_ex} is an immediate consequence of the next proposition.
\begin{proposition}\label{prop:Q_weakly}
As $N\uparrow\infty$, the sequence of probability measures $\{\bb
Q_{\mu_N}^{\theta,N}:N\geq{1}\}$ converges weakly to $\bb Q^{\theta}$.
\end{proposition}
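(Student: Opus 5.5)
The plan is the standard entropy/martingale route: tightness, characterization of limit points, and uniqueness. First I will prove tightness of $\{\bb Q^{\theta,N}_{\mu_N}\}_{N\geq1}$ in $D([0,T];\mc M)$. By Aldous' criterion it suffices to control $\<\pi^N_t,H\>$ for $H$ in a dense family of $C^2[0,1]$. I will use Dynkin's martingale $M^N_t(H)$ as in \eqref{Martingal}.

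Including the site $0$ in \eqref{empirical_ex} helps here. In $\msf L_N\<\pi^N_s,H\>$ the boundary bond contributes
\[
N\big[H(\pfrac1N)-H(0)\big]\Big(\frac{\alpha\eta(0)}{N^\theta}(1-\eta(1))-\eta(1)\Big),
\]
which is bounded once we control $N^{-\theta}\eta_s(0)\le N^{-\theta}\sum_{x}\eta_0(x)$. By conservation of particles and the law of large numbers this quantity is of order one. The remaining terms are discrete Laplacians of $H$ times occupation variables, so they are bounded as well. For the quadratic variation I will argue as in Section~\ref{s4}: by attractiveness, the initial measure is dominated by $\nu_p$ of Proposition~\ref{prop:reversible} with $p$ close to $1$ (truncating $\gamma$ if necessary). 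This gives $\bb E\<M^N(H)\>_t=O(1/N)$, so the martingales vanish.

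Next I characterize limit points, case by case.

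For $\theta<1$, take $H'(0)=H'(1)=0$. The boundary term is then $O(1)\cdot O(1/N)$ times bounded quantities, and it vanishes. The mass of the reservoir in $\<\pi^N,H\>$ is $O(N^{\theta-1})\to0$. In the limit we obtain the weak Neumann equation of Definition~\ref{def:weak_Neumann}.

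For $\theta>1$, take $H(0)=H'(1)=0$. The boundary term becomes $H'(0)\big(\alpha N^{-\theta}\eta(0)(1-\eta(1))-\eta(1)\big)$, and I must show $\int_0^t\eta_s(1)\,ds\to0$. The inflow $N^{-\theta}\eta(0)$ is of order one in total mass. This needs care: I would compare with an exclusion process with a slow source and show that the density near $1$ is $O(N^{1-\theta})$. I would do this by computing $\bb E[\eta_s(1)]$ via a summation by parts of the time integral of the martingale for a test function $H$ with $H'(0)\neq0$, $H(0)=0$. This identifies $\int_0^t \eta_s(1)\,ds$ up to vanishing errors in terms of $\int_0^t N^{-\theta}\eta_s(0)(1-\eta_s(1))\,ds$, plus a replacement showing the latter is small times $N^{1-\theta}$.

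For $\theta=1$, conservation gives
\[
N^{-1}\eta_s(0)=N^{-1}\sum_x\eta_0(x)-\<\pi^N_s,\mathbf 1_{(0,1]}\>\to M-\int_0^1\rho_s .
\]
The remaining term $-\eta(1)+\alpha(M-\int\rho_s)(1-\eta(1))$ needs a replacement lemma: replace $\eta_s(1)$ by the average $\frac{1}{\eps N}\sum_{x=1}^{\eps N}\eta_s(x)$, which converges to the trace $\rho_s(0)$. The first term in $\msf L_N$ combines to $H'(0)\big[\alpha(M-\int\rho)(1-\rho_s(0))-\rho_s(0)\big]$. The $-\rho_s(0)H'(0)$ piece cancels with the discrete Laplacian boundary term from summation by parts, which yields exactly \eqref{eq:cond_3}.

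I also need an energy estimate showing $\rho\in\Sobolev$, so that the trace makes sense.

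The main obstacle is the replacement lemma at $\theta=1$, together with the energy estimate. I would use the entropy method with reference measure $\nu_p$. Because $0<\gamma(0)<1$, the relative entropy $H(\mu_N|\nu_p)$ is $O(N)$: the Poisson parameters at $0$ match up to order $N$, and this is where the assumption is used. Then I would bound the Dirichlet form along bonds $\{x,x+1\}$ with $x\ge1$ and use the standard path argument, $|\eta(1)-\eta(x)|$ telescoped. The boundary bond $\{0,1\}$ is not reversible-symmetric with respect to $\nu_p$ with the right parameter when $\rho$ varies. I would therefore choose $p$ constant and absorb the error through the $O(N)$ entropy bound.

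Finally, uniqueness (Propositions~\ref{prop:uniqueness_5.7}, \ref{prop:unique_non_linear_Dirichlet}, \ref{prop:uniqueness_5.10}) identifies each limit point as $\bb Q^\theta$, which proves the weak convergence.
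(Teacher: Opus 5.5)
\textbf{The case $\theta\in(1,\infty)$.} This step fails, and the estimate you aim for is false.

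First, your boundary expression $H'(0)\big(\alpha N^{-\theta}\eta_s(0)(1-\eta_s(1))-\eta_s(1)\big)$ omits the cancellation of $-\eta_s(1)H'(0)$ against \eqref{eq_LN_2}, which you do use at $\theta=1$. The actual boundary contribution is $H'(0)\,\alpha N^{-\theta}\eta_s(0)(1-\eta_s(1))+O(1/N)$. By conservation, $|\eta_s(0)-\eta_0(0)|\le N$, so $\alpha N^{-\theta}\eta_s(0)\to\gamma(0)$ uniformly in $s$. This term therefore vanishes only if $\int_0^t\eta_s(1)\,ds\to t$, which is the opposite of what you try to show.

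Second, the source is not slow. The entrance rate $N^2\alpha N^{-\theta}\eta_s(0)(1-\eta_s(1))\approx N^2\gamma(0)(1-\eta_s(1))$ is of the same order as the exit rate $N^2\eta_s(1)$. By Proposition~\ref{prop:reversible}, a reservoir holding about $\frac{N^\theta}{\alpha}\gamma(0)$ particles is in equilibrium with bulk density $\frac{\gamma(0)}{1+\gamma(0)}$, not $O(N^{1-\theta})$.

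Carrying out the $\theta=1$ argument correctly (conservation plus Lemma~\ref{lem:replacement}) yields the boundary term $\int_0^t\gamma(0)\big(1-\rho_s(0)\big)H'(0)\,ds$, i.e.\ $\rho_t(0)=\gamma(0)/(1+\gamma(0))$. Hence, when $\gamma(0)>0$, no limit point is concentrated on the solution of \eqref{heat_dirichlet}. The paper's one-line treatment of this case (``$\theta\in(1,\infty)$ makes the boundary term vanish'') overlooks the same conservation identity. So for $\gamma(0)>0$ the $\theta>1$ part of the statement does not hold as written, and this step cannot be repaired.

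Tightness also fails here. With the reservoir included in \eqref{empirical_ex}, $\frac{1}{N}\eta_t(0)\approx N^{\theta-1}\gamma(0)/\alpha\to\infty$ when $\gamma(0)>0$. So condition (i) of Aldous' criterion fails for test functions with $H(0)\neq 0$, and your tightness argument breaks down for $\theta>1$ too.

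\textbf{Control of the reservoir for $\theta\in[0,1)$.} Conservation gives only $\alpha N^{-\theta}\eta_s(0)\le\alpha N^{-\theta}\msf{M}(N)$, which is of order $N^{1-\theta}\to\infty$, not of order one. So your bound on the integral term in the tightness step is wrong for general $H$.

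The same crude bound is also insufficient in your characterization step. With $H'(0)=0$ it controls the boundary term only by $O(N^{-\theta})$, which does not vanish at $\theta=0$. It also gives only $\frac{1}{N}\eta_t(0)=O(1)$, so you have not shown that the Dirac component at $0$ vanishes.

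What is needed is $\sup_{s\le T}\bb E\big[(\alpha N^{-\theta}\eta_s(0))^2\big]=O(1)$. The paper obtains this from attractiveness and stochastic domination by an invariant product measure, then applies the Kolmogorov--Centsov criterion to the time integral of \eqref{eq_LN_1}. You invoke domination only for the quadratic variation, where it is not needed: the conservation bound times $[H(\pfrac{1}{N})-H(0)]^2=O(N^{-2})$ already suffices there. Domination must be applied to the reservoir term instead.

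The rest of your plan follows the paper: the $\theta=1$ case via conservation, the replacement lemma, the energy estimate and the entropy bound, and the final appeal to uniqueness.
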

Let us prove the proposition above subject to forthcoming results.
\begin{proof}[Proof of Proposition~\ref{prop:Q_weakly}]
In   Subsection~\ref{tight},
we show that, for any $\theta\in [0,\infty)$,  the sequence $\{\bb Q_{\mu_N}^{\theta, N}\}_{N\ge
1}$ is tight, thus relatively compact by Prokhorov's Theorem. In Subsection~\ref{subsec:charac_SEP} we show that, if $\bb Q_{*}^{\theta}$ is the limit  along a subsequence of $\{\bb Q_{\mu_N}^{\theta, N}\}_{N\ge
1}$, then $\bb Q_{*}^{\theta}$  is concentrated on trajetories $\rho(t,u)du$ such that $\rho(t,u)$ is a weak solution  to the   corresponding PDE according to the range of $\theta$. Uniqueness of weak solutions, presented in Subsection~\ref{sub:uniqueness_SEP}, implies that $\bb Q_{*}^{\theta}= \bb Q^{\theta}$, concluding the proof.
\end{proof}

We start  with Subsections~\ref{subsec:entropy} and \ref{sec:replacement}, which contain some technical results needed in the sequel.

\subsection{Reversible measure and entropy}\label{subsec:entropy}
Let us prove Proposition~\ref{prop:reversible}, which says that the measure
\begin{equation*}
\nu_p \;=\; \mathrm{Poisson}\Big(\frac{N^\theta}{\alpha}\cdot\frac{p}{1-p}\Big)\otimes \bigotimes_{x=1}^N \mathrm{Bernoulli}(p)
\end{equation*}
for $0<p<1$ is reversible for the Markov process defined by the generator \eqref{generator}.

\begin{proof}[Proof of Proposition~\ref{prop:reversible}]
It must be shown that for every $g,  f  \in L_{\nu_p}^2(\Omega_N)$,
\begin{equation*}
	\int  g  \msf{L}_N  f  \,d\nu_p \;=\;  \int  f  \msf{L}_N  g  \, d\nu_p\,,
\end{equation*}
see the Appendix of \cite{kl} for instance.  It suffices to verify the condition for the bond $\{0,1\}$ since the dynamics over the remaining bonds follow the symmetric exclusion part of the dynamics, for which the product of Bernoulli measures of constant parameter is known to be reversible, see \cite{kl}.

For the  jump across the bond $\{0,1\}$, fix a configuration $\eta$ with $\eta(0) \geq 1$ and $\eta(1) = 0$, and denote by $\xi$ the configuration obtained from $\eta$ by moving a particle from $0$ to $1$, if possible. That is, 
$\xi:=\eta^{0,1}$ as in \eqref{eq:eta_xy_ex}. Let $\lambda := \frac{N^\theta}{\alpha}\cdot\frac{p}{1-p}$. Since $\xi(0) = \eta(0)-1$ and $\xi(1) = 1$ in the set $A=\{\eta: \eta(0)\geq 1\}$, the Radon-Nikodym derivative in the set $A$ is given by
\begin{equation*}
	\frac{\nu_p(\eta)}{\nu_p(\xi)} \;=\; \frac{\frac{e^{-\lambda}\lambda^{\xi(0)+1} }{(\xi(0)+1)!}\cdot(1-p)}{\frac{e^{-\lambda}\lambda^{\xi(0)}}{\xi(0)!}\cdot p} \;=\; \frac{\lambda}{\xi(0)+1} \cdot \frac{1- p}{p} \;=\; \frac{N^\theta}{\alpha}\cdot\frac{1}{\xi(0)+1}\,.
\end{equation*}
Keeping in mind that the integrand in the first integral below vanishes outside the set $A$, we  consequently have that
\begin{align*}
	& \int  \frac{\alpha}{N^\theta}\eta(0) f (\eta)  (1-\eta(1)) \Big[ g (\eta^{0,1}) -  g (\eta)\Big] d\nu_p(\eta) \\
	& \;=\; \sum_{k\geq 1} \sum_{\substack{\eta \in \Omega_N:\\ \eta(0) = k}}  f (\eta) \frac{\alpha}{N^\theta}\eta(0)(1-\eta(1))\Big[ g (\eta^{0,1}) -  g (\eta)\Big] \nu_p(\eta)\\
	& \;=\; \sum_{k\geq 1} \sum_{\substack{\xi \in \Omega_N:\\ \xi(0) = k-1}}  f (\xi^{1,0}) \frac{\alpha}{N^\theta} (\xi(0)+1) \xi(1) \Big[ g (\xi) -  g (\xi^{1,0})\Big] \frac{\nu_p(\eta)}{\nu_p(\xi)} \nu_p(\xi)\\
	& \;=\; \sum_{k\geq 1} \sum_{\substack{\xi \in \Omega_N:\\ \xi(0) = k-1}}  f (\xi^{1,0}) \frac{\alpha}{N^\theta}(\xi(0)+1) \xi(1) \Big[ g (\xi) -  g (\xi^{1,0})\Big] \frac{N^\theta}{\alpha(\xi(0)+1)} \nu_p(\xi) \\
	& \;=\; \sum_{k\geq 0} \sum_{\substack{\xi \in \Omega_N:\\ \xi(0) = k}}  f (\xi^{1,0}) \xi(1) \Big[ g (\xi) -  g (\xi^{1,0})\Big] \nu_p(\xi) \\
	& \;=\; \int  f (\xi^{1,0}) \xi(1) \Big[ g (\xi) -  g (\xi^{1,0})\Big] d\nu_p(\xi)\,.
\end{align*}
With the change of variables above at hand, we can check the detailed balance condition related to the  $\{0,1\}$ bond:
\begin{align*}
	& \int  f (\eta) \frac{\alpha}{N^\theta}\eta(0) (1-\eta(1)) \big[ g (\eta^{0,1}) -  g (\eta)\big] d\nu_p + \int  f (\eta) \eta(1) \big[ g (\eta^{1,0}) -  g (\eta)\big] d\nu_p \\
	& - \int  g (\eta) \frac{\alpha}{N^\theta}\eta(0) (1 \!-\!\eta(1)) \big[ f (\eta^{0,1}) \!-\!  f (\eta)\big] d\nu_p - \int  g (\eta) \eta(1) \big[ f (\eta^{1,0}) \!-\!  f (\eta)\big] d\nu_p \\
	& = \int  f (\eta^{1,0}) \eta(1) \big[ g (\eta) -  g (\eta^{1,0})\big] d\nu_p + \int  f (\eta) \eta(1) \big[ g (\eta^{1,0}) -  g (\eta)\big] d\nu_p \\
	& -  \int  g (\eta^{1,0}) \eta(1) \big[ f (\eta) -  f (\eta^{1,0})\big] d\nu_p - \int  g (\eta) \eta(1) \big[ f (\eta^{1,0}) -  f (\eta)\big] d\nu_p   \;=\; 0\,,
\end{align*}
finishing the proof.
\end{proof}
Let $\mathbf{H}(\mu | \nu_p) = \sum_{\eta\in\Omega_N} \mu(\eta) \log \pfrac{\mu(\eta)}{\nu_p(\eta)}$ be the relative entropy of a probability measure $\mu$ with respect to the invariant state $\nu_p$. 
\begin{proposition}\label{prop:linear_entropy}
Recall \eqref{mu_N_ex}  and let $0<p<1$ be such that $\frac{p}{1-p}=\gamma(\pfrac{0}{N})$.
Then, there exists a constant $K_0<\infty$  such that
\begin{equation}
	\mathbf{H}(\mu_N | \nu_p) \;\leq\; K_0 N\,.
\end{equation}
\end{proposition}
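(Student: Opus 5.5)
The plan is to use that both $\mu_N$ and $\nu_p$ are product measures on $\Omega_N=\bb N\times\{0,1\}^N$, so the relative entropy splits as a sum of one-site relative entropies:
\begin{equation*}
\mathbf{H}(\mu_N|\nu_p) \;=\; \mathbf{H}\Big(\mathrm{Poisson}\big(\tfrac{N^\theta}{\alpha}\gamma(0)\big)\,\Big|\,\mathrm{Poisson}\big(\tfrac{N^\theta}{\alpha}\tfrac{p}{1-p}\big)\Big) \;+\; \sum_{x=1}^N \mathbf{H}\Big(\mathrm{Bernoulli}\big(\gamma(\pfrac{x}{N})\big)\,\Big|\,\mathrm{Bernoulli}(p)\Big)\,.
\end{equation*}
This follows directly from the definition of $\mathbf{H}$: the logarithm of a ratio of products is a sum of logarithms, and each term integrates against only its own marginal. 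I will verify this first.

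The reservoir term is where the choice of $p$ matters. Since $\frac{p}{1-p}=\gamma(\pfrac{0}{N})=\gamma(0)$, the two Poisson parameters coincide exactly. Hence the two marginals at site $0$ are equal and this term vanishes. This is the key point. A mismatch in parameters of order $N^\theta$ at the reservoir could otherwise produce a contribution of order $N^\theta$, which would exceed $N$ when $\theta>1$.

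For each bulk site, write $g=\gamma(\pfrac{x}{N})\in[0,1]$ and use the convention $0\log 0=0$. The one-site entropy is
\begin{equation*}
g\log\frac{g}{p}+(1-g)\log\frac{1-g}{1-p} \;\le\; -g\log p-(1-g)\log(1-p) \;\le\; -\log p-\log(1-p)\,,
\end{equation*}
because $g\log g\le 0$ and $(1-g)\log(1-g)\le0$. Since $\gamma(0)$ does not depend on $N$, neither does $p=\gamma(0)/(1+\gamma(0))$. Summing over the $N$ bulk sites therefore gives the statement with $K_0:=-\log p-\log(1-p)$, which is finite because $0<p<1$ by hypothesis.

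I expect no real obstacle here. The only subtle point is the exact cancellation at site $0$, which is precisely why $p$ is tied to $\gamma(0)$. This is also where the hypothesis $0<\gamma(0)$ (hence $p>0$) is needed: it is guaranteed in the regime $\theta=1$ by the assumption $0<\gamma(0)<1$ of Theorem~\ref{thm:hydro_ex}.
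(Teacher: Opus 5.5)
Your proposal is correct and follows essentially the paper's argument. The Poisson marginals at site $0$ coincide because $\frac{p}{1-p}=\gamma(0)$, so only the $N$ Bernoulli sites contribute, each at most a constant depending on $p$; the paper bounds the likelihood ratio pointwise (numerator at most $1$, denominator at least $(p\wedge(1-p))^N$) and gets the slightly sharper but equivalent constant $K_0=\log\frac{1}{p\wedge(1-p)}$.
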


\begin{proof} Note that $\nu_p$ and $\mu_N$ are product measures, and their marginals at the site $x=0$ have Poisson
distribution with the same parameter. Thus,  
\begin{align*}
	\mathbf{H}(\mu_N | \nu_p) 	& \;=\; \sum_{\eta\in\Omega_N} \mu(\eta)\log\bigg[\frac{\prod_{x=1}^N \big[\gamma(\pfrac{x}{N})\eta(x)+(1-\gamma(\pfrac{x}{N}))\big(1-\eta(x)\big)\big]}{\prod_{x=1}^N \big[p\eta(x)+(1-p)\big(1-\eta(x)\big)\big]}\bigg]\\
	& \;\leq\;  \sum_{\eta\in\Omega_N} \mu(\eta)\log\bigg[\frac{1}{\big(p\wedge(1-p)\big)^N}\bigg] \;=\; N\log\frac{1}{ \big(p\wedge(1-p)\big)} \,.      
\end{align*}
\end{proof}

\subsection{Replacement lemma and energy estimate}\label{sec:replacement}
For the regime $\theta=1$, two technical ingredients are needed: first, a replacement lemma that allows to substitute in $L^1$ the time integral  of the occupation at the site $x=1$ by the time integral of the average occupation at the sites $x=1, \ldots, \eps N$. Here,  by $\eps N$ we  mean $\lfloor \eps N\rfloor$. Second, an energy estimate that allows to conclude that any limit of $\bb Q_{\mu_N}^{\theta,N}$ along some subsequence
is concentrated in trajectories whose density with respect to the Lebesgue measure belongs to the Sobolev space $\Sobolev$. These two technical facts are standard for the SSEP, and they can be promptly adapted from \cite{fgn1}, for example. In what follows, we precisely state these two results. Afterwards, we explain the required hypothesis and why they are satisfied for the present model. 
Define the empirical average by
\begin{equation*}
\eta^{\eps N}(x)\;=\;\pfrac{1}{\eps N}\sum_{y=x+1}^{x+\eps N}\eta(y)\,, \qquad x\in \{0,\ldots, (1-\eps)N\}\,.
\end{equation*}
\begin{lemma}[Local Replacement Lemma]\label{lem:replacement}
Let $x\in \{1,\ldots, (1-\eps)N\}$. Then, for any $\theta\geq 0$,
\begin{equation*}
	\limsup_{\eps\to 0}\limsup_{N\to\infty}
	\bb E_{\mu_N}^{\theta, N}\bigg[\,\Big|\int_0^t\{\eta_s(x)-\eta_s^{\eps N}(x)\}\, ds\Big|\,\bigg]\;=\;0\,.
\end{equation*}
\end{lemma}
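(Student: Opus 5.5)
The plan is to run the standard entropy method for the local replacement lemma of the SSEP, as in \cite{fgn1}, with the measure $\nu_p$ of Proposition~\ref{prop:reversible} as the reference state.

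\textbf{Step 1: reduction to a variational bound.} Write $V_\eps(\eta)=\eta(x)-\eta^{\eps N}(x)$. By the entropy inequality and Jensen's inequality, for every $B>0$,
\begin{equation*}
\bb E_{\mu_N}^{\theta,N}\Big[\Big|\int_0^t V_\eps(\eta_s)\,ds\Big|\Big]\;\le\;\frac{\mathbf H(\mu_N|\nu_p)}{BN}+\frac{1}{BN}\log \bb E_{\nu_p}^{\theta,N}\Big[e^{BN|\int_0^t V_\eps(\eta_s)ds|}\Big].
\end{equation*}
Proposition~\ref{prop:linear_entropy} bounds the first term by $K_0/B$. This is the place where $0<\gamma(0)<1$ is used, because it keeps $p$ away from $0$ and $1$ uniformly in $N$. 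Next I would remove the absolute value with $e^{|a|}\le e^a+e^{-a}$. Then I apply Feynman--Kac, using that $\nu_p$ is reversible. This bounds the second term by
\begin{equation*}
t\sup_f\Big\{\int V_\eps f\,d\nu_p-\frac{N}{B}\,\mf D_N(\sqrt f)\Big\},
\end{equation*}
where the supremum runs over densities $f$ with respect to $\nu_p$, and $\mf D_N(g)=\tfrac12\sum \int \tfrac{\xi^N_{x,y}}{N^2}\,[g(\eta^{x,y})-g(\eta)]^2\,d\nu_p$ is the Dirichlet form without the $N^2$ factor. I then drop the nonnegative contribution of the bond $\{0,1\}$ from $\mf D_N$, which leaves only the exclusion bonds inside $\{1,\dots,N\}$.

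\textbf{Step 2: telescoping and change of variables.} Write
\begin{equation*}
\eta(x)-\eta^{\eps N}(x)=\frac{1}{\eps N}\sum_{y=x+1}^{x+\eps N}\sum_{z=x}^{y-1}\big(\eta(z)-\eta(z+1)\big).
\end{equation*}
For each bond $\{z,z+1\}\subset\{1,\dots,N\}$, the exchange $\eta\mapsto\eta^{z,z+1}$ preserves $\nu_p$, since the marginals on the bulk are Bernoulli with a constant parameter. So I can write $\int(\eta(z)-\eta(z+1))f\,d\nu_p=\tfrac12\int(\eta(z)-\eta(z+1))(f(\eta)-f(\eta^{z,z+1}))\,d\nu_p$. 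Factoring $f(\eta)-f(\eta^{z,z+1})=(\sqrt f(\eta)-\sqrt f(\eta^{z,z+1}))(\sqrt f(\eta)+\sqrt f(\eta^{z,z+1}))$ and applying Young's inequality with a parameter $A$, I get for each bond
\begin{equation*}
\frac{A}{4}\int\big(\sqrt f(\eta^{z,z+1})-\sqrt f(\eta)\big)^2d\nu_p+\frac{1}{A}.
\end{equation*}
The second term uses $\int f\,d\nu_p=1$ and $|\eta(z)-\eta(z+1)|\le1$. Each bond $z$ appears at most $\eps N$ times in the double sum, and the prefactor is $1/(\eps N)$. Summing, I obtain the bound $\tfrac{A}{4}\,\mf D_N(\sqrt f)+\eps N/A$.

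\textbf{Step 3: choosing the parameters.} I take $A=4N/B$, so the Dirichlet form term is absorbed by $-\tfrac NB\mf D_N(\sqrt f)$. This leaves the estimate $K_0/B+C\,t\,\eps B$. Letting $N\to\infty$, then $\eps\to0$, then $B\to\infty$ gives the claim. The bound is uniform in $\theta$, because the only $\theta$-dependent piece of the dynamics, the bond $\{0,1\}$, was discarded.

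The main obstacle is Step 1, specifically keeping the entropy bound linear in $N$ uniformly in $N$. The marginals of $\mu_N$ and $\nu_p$ at the reservoir coincide only because $p$ is chosen through $\gamma(0)$, which is how Proposition~\ref{prop:linear_entropy} is set up. The other delicate point is justifying Feynman--Kac on the infinite state space $\bb N\times\{0,1\}^N$. There I would truncate, or simply note that $V_\eps$ is bounded and the process is reversible, so the spectral bound still applies.
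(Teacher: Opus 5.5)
Your proposal is correct and matches the paper's approach. The paper gives no separate proof: it defers to the standard entropy-inequality and Feynman--Kac argument of \cite{fgn1}, and lists as inputs exactly what you use, namely SEP dynamics on the bulk bonds (so the reservoir bond can be dropped from the Dirichlet form), the Bernoulli-product bulk marginal of $\nu_p$ from Proposition~\ref{prop:reversible}, and the linear entropy bound of Proposition~\ref{prop:linear_entropy}. The only slip is a constant: with your normalization each bulk bond contributes $\frac12\int(\sqrt f(\sigma^{z,z+1}\eta)-\sqrt f(\eta))^2d\nu_p$ to $\mf D_N(\sqrt f)$, so the Step~2 sum is at most $\frac{A}{2}\mf D_N(\sqrt f)$ and you should take $A=2N/B$ (and write the swap $\sigma^{z,z+1}$ rather than the paper's one-directional move $\eta^{z,z+1}$), which leaves the conclusion unchanged.
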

\begin{proposition}[Energy Estimate]\label{s05}
Let  $\bb Q_*^{\theta}$  be a weak limit of $\bb Q_{\mu_N}^{\theta,N}$ along some subsequence. Then $\bb Q_*^{\theta}$ is concentrated on paths $\pi(t,u)=\rho(t,u) du$ such that there exists a function in $L^2([0,T]\times [0,1])$, denoted by $\partial_u \rho$, such that
\begin{equation*}
	\int_0^T \int_{[0,1]}  \, (\partial_u G) (s, u) \, \rho(s,u)\,du \,ds
	\;=\; -\; \int_0^T \int_{[0,1]} \,   G (s, u)\, (\partial_u \rho) (s, u)\,du \,ds\,, 
\end{equation*}
for all $G$ in $C_c^{0,1}([0,T]\times [0,1])$.
\end{proposition}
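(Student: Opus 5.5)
The plan follows the standard energy estimate for symmetric exclusion, as in \cite{fgn1}, via a variational characterization of the energy. Define, for $G\in C^{0,1}_c([0,T]\times[0,1])$, the linear functional $\ell(\pi,G)=\int_0^T\<\pi_s,\p_uG_s\>ds$. It suffices to show that there is a constant $K$ such that
\begin{equation*}
\bb E_{\bb Q_*^\theta}\Big[\sup_{G}\Big\{\ell(\pi,G)-2\int_0^T\!\!\int_0^1 G(s,u)^2\,du\,ds\Big\}\Big]\;\le\; K,
\end{equation*}
where the supremum runs over a countable dense family $\{G_m\}_{m\ge1}$. Indeed, under this bound the functional $\ell(\pi,\cdot)$ is almost surely bounded in the $L^2([0,T]\times[0,1])$ norm. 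Riesz representation then produces $\p_u\rho\in L^2$ with the required integration by parts identity. Since $G$ has compact support in $(0,1)$, the site $x=0$ (which the empirical measure \eqref{empirical_ex} includes) never enters, and only bulk exclusion bonds matter.

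By monotone convergence it is enough to bound the expectation of $\max_{1\le m\le k}$ uniformly in $k$. Since the functional is lower semicontinuous in the Skorokhod topology, we pass to the discrete level along the subsequence. After summation by parts, $\ell(\pi^N,G)$ is approximated by
\begin{equation*}
\int_0^T\sum_{x}G_s(\pfrac{x}{N})\big(\eta_s(x)-\eta_s(x+1)\big)\,ds,
\end{equation*}
up to an $o(1)$ error, using $\eta\le1$ on bulk sites. Next we apply the entropy inequality with respect to the reversible measure $\nu_p$ of Proposition~\ref{prop:reversible}, with $\frac{p}{1-p}=\gamma(0)$ and the hypothesis $0<\gamma(0)<1$ ensuring $0<p<1$. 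Combined with Proposition~\ref{prop:linear_entropy}, which gives $\mathbf H(\mu_N|\nu_p)\le K_0N$, and with $e^{\max a_m}\le\sum e^{a_m}$, this reduces matters to bounding $\frac1N\log\bb E_{\nu_p}\exp\{N(\cdots)\}$ for a single $G$. The Feynman--Kac formula, valid because $\nu_p$ is reversible, bounds this by the time integral of the largest eigenvalue of $\msf L_N+N V_s$. That eigenvalue equals the supremum over densities $f$ of $\int V_s f\,d\nu_p-\frac{1}{N}\mc D_N(\sqrt f)$, where $\mc D_N$ is the Dirichlet form; we discard the nonnegative contribution of the bond $\{0,1\}$.

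The key step is the estimate of $\int G(\pfrac{x}{N})(\eta(x)-\eta(x+1))f\,d\nu_p$. We use the change of variables $\eta\mapsto\eta^{x,x+1}$, under which $\nu_p$ is invariant on bulk bonds, and Young's inequality with weight $N$. This yields the bound
\begin{equation*}
\frac{1}{N}\sum_x\int\big(\sqrt{f(\eta^{x,x+1})}-\sqrt{f(\eta)}\big)^2d\nu_p\;+\;\frac{C}{N}\sum_x G(\pfrac{x}{N})^2.
\end{equation*}
The first term is absorbed by the bulk part of the Dirichlet form, after matching the $N^2$ scaling in \eqref{eq:rates}. The second term converges to $C\int G^2$ and is cancelled by the penalty term, provided the constant $2$ in the variational functional is taken larger than $C$.

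I expect the main obstacle to be the bookkeeping at the reservoir. The measure $\nu_p$ has a non-product-like Poisson marginal of size $N^\theta$ at site $0$, so one must check two things: the entropy bound stays $O(N)$, which Proposition~\ref{prop:linear_entropy} already handles, and the reservoir bond contributes nonnegatively to the Dirichlet form, which it does because $\nu_p$ is reversible for that bond too. Both are available, so the argument of \cite{fgn1} should go through essentially verbatim.
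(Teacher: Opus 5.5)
Your proposal is correct and follows the paper's argument. The paper checks the same three inputs: SEP dynamics on $\{1,\ldots,N\}$, a Bernoulli bulk marginal for the reversible $\nu_p$, and $\mathbf{H}(\mu_N|\nu_p)\le K_0N$. It then imports from \cite{fgn1} the bound $\bb E_{\bb Q_*^\theta}[\sup_H\{\<\!\<\rho,\p_uH\>\!\>-2\<\!\<H,H\>\!\>\}]\le K_0$ and concludes by Riesz representation, which is the entropy/Feynman--Kac/Young route you spell out.

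There are two bookkeeping slips to fix. First, the change of variables must use the exchange of the occupations at $x$ and $x+1$. In the paper's notation $\eta^{x,x+1}$ is the non-injective ``move if possible'' map, which does not preserve $\nu_p$. Second, in your Young's-inequality display the gradient term should carry the factor $N/2$, not $1/N$, so that it matches $\frac{1}{N}$ times the bulk Dirichlet form $\frac{N^2}{2}\sum_x\int(\cdots)^2\,d\nu_p$. With that weight the remainder is exactly $\frac{2}{N}\sum_x G(\frac{x}{N})^2$, which is why the constant $2$ works. With coefficient $1/N$ the remainder would instead be of order $N\sum_x G(\frac{x}{N})^2$.
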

Below we explain the necessary hypothesis to prove the two results above (adapting the proofs from \cite{fgn1}, for example) and why they are satisfied here:
\begin{itemize}
\item The dynamics of the sites $x=1, \ldots, N$ must be of SEP type, which is the case.
\item The marginal of the reversible distribution at the sites $x=1, \ldots, N$ must be the product of Bernoulli measures of constant parameter, which is provided by Proposition~\ref{prop:reversible}.
\item The entropy between the distribution of the process at initial time and the reversible invariant state must grow at most linearly, that is, we must have $\mathbf{H}(\mu_N|\nu_p) \leq K_0 N$, which is the content of Proposition~\ref{prop:linear_entropy}. 
\end{itemize}
Let $\<\!\<\cdot, \cdot \>\!\>$ denote the inner product in $L^2([0,T]\times [0,1])$.
The  facts listed above allow to conclude that if  $\bb Q_*^{\theta}$  is a weak limit of $\bb Q_{\mu_N}^{\theta,N}$ along some subsequence, then $\bb Q_*^{\theta}$ is concentrated on trajectories $\{\pi_t = \rho(t,u)du: t\in [0,T]\}$ and 
\begin{equation*}
 \bb E_{\bb Q_*^{\theta}}\Big[\sup_{H}\Big\{\<\!\<\rho, \p_u H\>\!\> - 2\<\!\< H, H \>\!\>\Big\} \Big]\; \leq \; K_0\;<\;\infty \,,
\end{equation*}
where  the supremum is taken over functions $H\in C_c^{0,1}([0,T]\times [0,1])$, and the constant $K_0$ is the one appearing in Proposition~\ref{prop:linear_entropy}. Applying the Riesz Representation Theorem, we conclude the proof of Proposition~\ref{s05}.

\subsection{Tightness}\label{tight}
To ensure the tightness of $\{\pi^{N}_t : 0\le t \le T\}$ it is sufficient to
show the tightness of the real-valued processes $\{\<\pi^{N}_t ,H\> :
0\le t \le T\}$ for $H\in{C[0,1]}$, cf. \ \cite[Chapter~2, Prop.~1.7]{kl}. More than that,  it is enough to show tightness of $\{\<\pi^{N}_t ,H\> :
0\le t \le T\}$ for a dense set of functions in $C[0,1]$ with respect to the uniform topology, since $(C[0,1], \Vert \cdot \Vert_\infty )$  is a separable metric space. For any function $H$, the process
\begin{equation}\label{M_ex}
M^{N}_{t}(H)\;:=\;\<\pi^{N}_{t}, H\>- \<\pi^{N}_{0}, H\>-\int_{0}^{t}\msf L_{N}\<\pi^{N}_{s},H\>\,ds
\end{equation}
is a martingale with respect to the natural filtration $\mathcal{F}_t:=\sigma(\eta_s: s\leq{t})$. In order to prove the tightness of $\{\<\pi^{N}_t ,H\>:t\in[0,T]\}_{N\geq 1}$, we prove tightness
of the sequence of the martingales and the integral terms in the above decomposition. We start with the former.
By the carré-du-champ formula, the quadratic variation of the martingale $M^{N}_{t}(H)$  is given by
\begin{equation*}
\begin{split}
	\<M^{N}(H)\>_t  
	\;=\; &  \int_{0}^{t} \Big(\frac{\alpha}{N^{\theta}} \eta_s(0)\big(1-\eta_s(1)\big) + \eta_s(1)\Big)\Big[H(\pfrac{1}{N})-H(\pfrac{0}{N})\Big]^2ds \\
	&+\int_{0}^{t}\sum_{x=1}^{N-1}  \big(\eta_{s}(x)-\eta_{s}(x+1)\big)^2
	\Big[H(\pfrac{x+1}{N})-H(\pfrac{x}{N})\Big]^2 ds\,.
\end{split}
\end{equation*}
Assume  that $H\in C^2[0,1]$. Similarly to what we have done before in Subsection~\ref{sub:limit}, consider the partial order  in $\Omega_N$ defined by
\begin{equation*}
\eta_1 \lesssim \eta_2 \quad\Longleftrightarrow\quad \eta_1(x)\leq \eta_2(x),\, \forall \,x=0,1,\ldots, N.
\end{equation*}
The initial measure $\mu_N$ defined in \eqref{mu_N_ex} is stochastically dominated by the product measure
\begin{equation}\label{mu_tilde_ex}
\widetilde{\mu}_N \;:=\; \mathrm{Poisson}\Big(\frac{N^\theta}{\alpha} \cdot \Vert \gamma\Vert_\infty\Big)\otimes \bigotimes_{x=1}^N\mathrm{Bernoulli}\big(\Vert \gamma\Vert_\infty\big)\,,
\end{equation}
which is an invariant measure by Proposition~\ref{prop:reversible}. By the attractiveness property and the fact that $\eta(x) \in \{0,1\}$ for $x=1,\ldots, N$, we have
\begin{align*}
\bb E_{\mu_N}^{\theta, N}\big[\<M^{N}(H)\>_t \big] \;\leq\; &  \bb E_{\widetilde{\mu}_N}^{\theta, N}\big[\<M^{N}(H)\>_t\big] \\
\;\leq\; &   \bb E_{\widetilde{\mu}_N}^{\theta, N}\bigg[ \int_{0}^{t} \Big(\frac{\alpha}{N^{\theta}} \eta_s(0) + 1\Big)\Big[H(\pfrac{1}{N})-H(\pfrac{0}{N})\Big]^2ds \bigg]\\
&+\int_{0}^{t}\sum_{x=1}^{N-1}  
\Big[H(\pfrac{x+1}{N})-H(\pfrac{x}{N})\Big]^2 ds\\
\;\leq\; & \bigg[\frac{t}{N^2}\big( \Vert \gamma\Vert_\infty + 1\big)  +\frac{t}{N}  \bigg] \cdot\| H'\|_{\infty}^2\,, 
\end{align*}
which goes to zero as $N\to\infty$. Since $\big(M^{N}_{t}(H)\big)^2 - \<M^{N}(H)\>_t$ is a zero mean martingale (see  \cite[Appendix 1]{kl}), by Doob's inequality, for every $\delta>0$,
\begin{equation}\label{limprobmart}
\lim_{N\rightarrow\infty}\bb P_{\mu_N}^{\theta, N}\bigg[\sup_{0\leq t\leq T} |M^{N}_{t}(H)|>\delta\bigg]\;=\;0\,,
\end{equation}
implying tightness of the sequence $\{M^{N}_{t}(H): t\in[0,T]\}_{N\geq 1}$.
Let us examine the tightness of the integral term $\big\{\int_{0}^{t}\msf L_{N}\<\pi^{N}_{s},H\>\,ds:t\in [0,T]\big\}_{N\geq 1}$ appearing in \eqref{M_ex}. Our goal is to apply Aldous' Criterion:
\begin{theorem}[Aldous' Criterion \cite{Aldous}]
A sequence $\{\msf{X}_t^N: t\in [0,T]\}_{N\geq 1}$ of real-valued processes is tight with respect to the Skorokhod topology of $	D([0,T];\bb R)$ if:
\begin{itemize}
	\item[i)]
	$\displaystyle\lim_{A\rightarrow{+\infty}}\;\limsup_{N\rightarrow{+\infty}}\;\mathbb{P}\Big(\sup_{0\leq{t}\leq{T}}|\msf{X}_{t
	}^N |>A\Big)=0,$
	
	\item[ii)] for any $\varepsilon >0,$
	$\displaystyle\lim_{\delta \to 0} \;\limsup_{N \to {+\infty}} \;\sup_{\zeta \leq \delta} \;\sup_{\tau \in \mc T_T}\;
	\mathbb{P}\Big(|
	\msf{X}_{\tau+\zeta}^N- \msf{X}_{\tau}^N| >\varepsilon\Big)=0,$
\end{itemize}
where $\mc T_T$ is the set of stopping times bounded by $T$.
\end{theorem}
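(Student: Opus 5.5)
This is Aldous' classical criterion. The plan is to reduce it to the standard characterization of tightness in $D([0,T];\bb R)$ (Billingsley, Theorem~13.2; Ethier--Kurtz, Chapter~3). That result says a sequence is tight if and only if two things hold. First, the compact containment condition, which is exactly hypothesis i). Second, for every $\varepsilon>0$,
\begin{equation*}
\lim_{\delta\to0}\limsup_{N\to\infty}\mathbb P\big(w'(\msf X^N,\delta)>\varepsilon\big)=0,
\end{equation*}
where $w'(x,\delta)=\inf_{\{t_i\}}\max_i\sup_{s,t\in[t_{i-1},t_i)}|x_t-x_s|$ and the infimum runs over partitions of $[0,T]$ with mesh larger than $\delta$. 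So the whole task is to derive this control of $w'$ from hypothesis ii).

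\textbf{Step 1: random shifts.} First upgrade ii), which involves a deterministic shift $\zeta$, to pairs of stopping times. Let $\tau\le\tau'$ be stopping times bounded by $T$ with $\tau'-\tau\le\delta$. Use the inclusion, for each $\zeta\in[0,2\delta]$,
\begin{equation*}
\{|\msf X_{\tau'}-\msf X_\tau|>\varepsilon\}\subset\{|\msf X_{\tau+\zeta}-\msf X_{\tau}|>\varepsilon/2\}\cup\{|\msf X_{\tau+\zeta}-\msf X_{\tau'}|>\varepsilon/2\}.
\end{equation*}
Average over $\zeta$ uniform in $[0,2\delta]$ and apply Fubini. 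In the second event, write $\tau+\zeta=\tau'+(\zeta-(\tau'-\tau))$; the shift $\zeta-(\tau'-\tau)$ ranges over a subinterval of $[-\delta,2\delta]$ of length at least $\delta$. Changing variables then bounds the averaged probability by a constant times $\sup_{\zeta\le 3\delta}\sup_{\sigma\in\mc T_T}\mathbb P(|\msf X^N_{\sigma+\zeta}-\msf X^N_\sigma|>\varepsilon/2)$. The result is that
\begin{equation*}
\limsup_N\mathbb P\big(|\msf X^N_{\tau'}-\msf X^N_\tau|>\varepsilon,\ \tau'-\tau\le\delta\big)
\end{equation*}
vanishes as $\delta\to0$, uniformly over such pairs of stopping times. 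I expect this averaging trick to be the main technical obstacle, in particular the careful bookkeeping of the random shift inside Fubini.

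\textbf{Step 2: oscillation stopping times.} Define $\tau_0=0$ and
\begin{equation*}
\tau_{k+1}=\inf\{t>\tau_k:|\msf X^N_t-\msf X^N_{\tau_k}|>\varepsilon/4\}\wedge T .
\end{equation*}
By right-continuity, $|\msf X^N_{\tau_{k+1}}-\msf X^N_{\tau_k}|\ge\varepsilon/4$ whenever $\tau_{k+1}<T$. Step~1 applied to the pair $(\tau_k,\tau_{k+1}\wedge(\tau_k+\delta))$ gives, for $N$ large and $\delta$ small, $\mathbb P(\tau_{k+1}-\tau_k\le\delta,\ \tau_k<T)\le\eta(\delta)$ uniformly in $k$, with $\eta(\delta)\to0$. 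By the strong Markov-free conditional version of the same argument (condition on $\mathcal F_{\tau_k}$), we get $\mathbb E[e^{-(\tau_{k+1}-\tau_k)}\mid\mathcal F_{\tau_k}]\le\beta<1$ on $\{\tau_k<T\}$. Iterating yields
\begin{equation*}
\mathbb P(\tau_K<T)\le e^T\mathbb E[e^{-\tau_K}]\le e^T\beta^K ,
\end{equation*}
so with high probability only a bounded number $K$ of these times fall before $T$.

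\textbf{Step 3: conclusion.} On the event that $\tau_K=T$ and that all gaps $\tau_{k+1}-\tau_k$ with $\tau_k<T$ exceed $\delta$, the partition formed by the $\tau_k$ has mesh larger than $\delta$. On each interval $[\tau_k,\tau_{k+1})$ the oscillation is at most $\varepsilon/2$, so $w'(\msf X^N,\delta)\le\varepsilon$. The complementary event has probability at most $e^T\beta^K+K\eta(\delta)$. Choosing $K$ large first and then $\delta$ small makes this arbitrarily small, which establishes the $w'$ condition. Together with i), the tightness characterization finishes the proof.
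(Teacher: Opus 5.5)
The paper does not prove this criterion; it quotes it from Aldous. Your outline follows the classical route: reduce to the $w'$-characterization, prove an averaging lemma for pairs of stopping times, introduce oscillation stopping times, and bound how many of them precede $T$. As written, however, Step~2 has a genuine gap.

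Hypothesis ii) only controls unconditional probabilities. Applying it to the stopping time $\tau_k\mathbf 1_A+T\mathbf 1_{A^c}$ with $A\in\mathcal F_{\tau_k}$ gives $\mathbb P(A\cap\{\cdots\})\le\eta$, never $\le\beta\,\mathbb P(A)$. So the estimate $\mathbb E[e^{-(\tau_{k+1}-\tau_k)}\mid\mathcal F_{\tau_k}]\le\beta<1$ on $\{\tau_k<T\}$ does not follow, and it is false in general. Take $\varepsilon<4$, and let $\mathsf X^N\equiv 0$ with probability $1-1/N$ and $\mathsf X^N_t=\lfloor Nt\rfloor \bmod 2$ with probability $1/N$. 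Then i) and ii) hold, since every probability in ii) is at most $1/N$. But $\{\tau_1<T\}$ is exactly the rare event, on which $\tau_2-\tau_1=1/N$. No $\beta<1$ works for large $N$, and the iteration $\mathbb P(\tau_K<T)\le e^T\beta^K$ collapses.

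The fact that few $\tau_k$ precede $T$ must be obtained without conditioning. One way: $\tau_K\ge\delta\,\#\{k<K:\tau_{k+1}-\tau_k>\delta\}$, so on $\{\tau_K<T\}$ at least $K-T/\delta$ of the first $K$ gaps are $\le\delta$ and end at a genuine oscillation ($\tau_{k+1}<T$). Let $\eta(\delta)$ bound $\mathbb P(\tau_{k+1}-\tau_k\le\delta,\ \tau_{k+1}<T)$ uniformly in $k$. Markov's inequality then gives $\mathbb P(\tau_K<T)\le K\eta(\delta)/(K-T/\delta)\le 2\eta(\delta)$ for $K=\lceil 2T/\delta\rceil$. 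Then pick a second scale $\delta'\le\delta$ with $K\eta(\delta')$ small, so that all $K$ gaps exceed $\delta'$. Hölder's inequality applied to $e^{-\tau_K}=\prod_{k<K}e^{-(\tau_{k+1}-\tau_k)}$ also works.

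Three further points need repair.

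\textbf{Step~1.} Averaging over $\zeta\in[0,2\delta]$ produces, for $\zeta<\tau'-\tau$, the backward increment $\mathsf X_{\tau+\zeta}-\mathsf X_{\tau'}$. Hypothesis ii) does not control it: viewed forward from $\tau+\zeta$, the shift $\tau'-\tau-\zeta$ is random. Average over $\zeta\in[\delta,2\delta]$ instead. Then $\tau+\zeta\ge\tau'$, the pathwise substitution $u=\tau+\zeta-\tau'$ lands in $[0,2\delta]$, and you get the bound $3\sup_{\zeta\le 2\delta}\sup_{\sigma}\mathbb P(|\mathsf X_{\sigma+\zeta}-\mathsf X_\sigma|>\varepsilon/2)$.

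\textbf{End effect at $T$.} Your bound $\mathbb P(\tau_{k+1}-\tau_k\le\delta,\ \tau_k<T)\le\eta(\delta)$ is false when $\tau_{k+1}=T$ comes from the truncation $\wedge\,T$ rather than from an oscillation. The deterministic paths $\mathsf X^N_t=a_Nt$ with $a_N=m\varepsilon/(4(T-1/N))$ satisfy i) and ii). Yet $\tau_m=T-1/N$ and $\tau_{m+1}=T$, so this probability equals $1$ for all large $N$. Your Step~3 bound is therefore unjustified, even though $w'$ is tiny in this example. Only gaps with $\tau_{k+1}<T$ are controlled, so the last interval must be handled separately. For instance, apply the averaging lemma at the deterministic time $T-2\delta$ to show that the oscillation of $\mathsf X^N$ on $[T-2\delta,T)$ is small with high probability. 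Then delete the last $\tau_k$ from the partition whenever it lies within $\delta$ of $T$.

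\textbf{Measurability.} $\tau_{k+1}$ is a first passage into an open set, hence a stopping time only for the right-continuous filtration. To apply ii), approximate it from above by dyadic stopping times and use right-continuity of the paths.
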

Note that
\begin{align*}
\msf{L}_N \<{\pi}_s^N, {H}\>  
\;=\;&   N \cdot H\big(\pfrac{0}{N}\big)\Big[\eta(1) - \frac{\alpha}{N^\theta}\eta(0)\big(1-\eta(1) \big)\Big]\\ 
& +N \cdot H\big(\pfrac{1}{N}\big)\Big[\frac{\alpha}{N^\theta}\eta(0)\big(1-\eta(1)\big) - 2\eta(1) + \eta(2)\Big]\\
&+ N\cdot \sum_{x=2}^{N-1} {H}\big(\pfrac{x}{N}\big)\big[\eta_s(x-1) - 2\eta_s(x) + \eta_s(x+1)\big]\\
&+ N \cdot H\big(\pfrac{N}{N}\big)\big[-\eta_s(N) + \eta_s(N-1) \big] \,,
\end{align*}
which can be rewritten as
\begin{align}
& \Big\{\frac{\alpha}{N^\theta}\eta_s(0)\big(1-\eta_s(1)\big)-\eta_s(1)\Big\}\cdot   N\Big[H\big(\pfrac{1}{N}\big)-H\big(\pfrac{0}{N}\big)\Big] \label{eq_LN_1}\\ 
&+ \eta_s(1)\cdot N\Big[  H\big(\pfrac{2}{N}\big) - H\big(\pfrac{1}{N}\big) \Big]  \label{eq_LN_2}\\
&+ \frac{1}{N}\sum_{x=2}^{N-1} \eta_s(x)\cdot N^2\Big[ H\big(\pfrac{x+1}{N}\big)+H\big(\pfrac{x-1}{N}\big) - 2H\big(\pfrac{x}{N}\big)  \Big] \label{eq_LN_3}\\
& + \eta_s(N)\cdot N\Big[ H\big(\pfrac{N-1}{N}\big) - H\big(\pfrac{N}{N}\big) \Big]\,. \label{eq_LN_4} 
\end{align}
Since $\eta(x)\in\{0,1\}$ for $x=1,\ldots, N$, we get
\begin{equation}\label{eq:int_reescrita_ex}
\big\vert \msf{L}_N \<{\pi}_s^N, {H}\>\big\vert  \;\leq\;  \Big\{3+\frac{\alpha}{N^\theta}\eta_s(0)\Big\}\cdot  \Vert H'\Vert_\infty +  \Vert H''\Vert_\infty\,.
\end{equation}
\smallskip

 $\bullet$ \textbf{Cases $\theta=1$ and $\theta\in(1,\infty)$.}
Since the system is mass-conservative,
\begin{align}\label{eq:conserva}
\eta_s(0) \;\leq\; \sum_{x=0}^N\eta_s(x) \;=\; \sum_{x=0}^N\eta_0(x)\,.
\end{align}
From \eqref{mu_N_ex},  by an application of the Law of Large Numbers, under $\mu_N$ we have that
\begin{align}\label{eq:LLN}
\frac{\alpha}{N^\theta}\sum_{x=0}^N\eta_0(x)\;\overset{N\to\infty}{\longrightarrow} \;
\begin{cases}
	\gamma(0)+\alpha\int_0^1 \gamma(u)du, &  \text{ if } \theta=1\\
	\gamma(0),& \text{ if } \theta\in(1,\infty)\\
\end{cases}
\end{align}
in $L^1$.
Putting together \eqref{eq:int_reescrita_ex} and \eqref{eq:conserva}  allows us to deduce that 
\begin{align*}
\bb E_{\mu_N}^{\theta, N}\bigg[\Big\vert \int_{\tau}^{\tau+\zeta} \msf{L}_N \<{\pi}_s^N, {H}\> \,ds\Big\vert\bigg]  
&\;\leq\; \bb E_{\mu_N}^{\theta, N}\bigg[ \int_{\tau}^{\tau+\zeta}\big\vert \msf{L}_N \<{\pi}_s^N, {H}\> \big\vert ds   \bigg] \\
&\;\leq\; \zeta \bigg(\bb E_{\mu_N}^{\theta, N}\Big[3+\frac{\alpha}{N^\theta}\sum_{x=0}^N\eta_0(x)\Big]\cdot  \Vert H'\Vert_\infty +  \Vert H''\Vert_\infty\bigg)\,,
\end{align*}
and the tightness  of the integral term  is a consequence of  the inequality above, \eqref{eq:LLN}, Aldous' criterion and Chebyshev's inequality. \medskip

 $\bullet$ \textbf{Case $\theta\in [0,1)$.} Here we cannot use \eqref{eq:LLN}, whose limit is infinite in this case, so a different approach is needed. Recall \eqref{mu_tilde_ex}. By the Cauchy-Schwarz inequality, attractiveness, and invariance of $\widetilde{\mu}_N$, we have that
\begin{align*}
\bb E_{\mu_N}^{\theta, N}\Big[ \Big(\int_r^t \frac{\alpha \eta_s(0)}{N^\theta}ds\Big)^2\Big] & \;\leq\;
\bb E_{\mu_N}^{\theta, N}\Big[ |t-r|\int_r^t \Big(\frac{\alpha \eta_s(0)}{N^\theta}\Big)^2 ds\Big]\\
& \;\leq\;
|t-r| \,\bb E_{\widetilde{\mu}_N}^{\theta, N}\Big[ \int_r^t \Big(\frac{\alpha \eta_s(0)}{N^\theta}\Big)^2 ds\Big] \;\leq\;
|t-r|^2 \Vert \gamma\Vert_\infty^2\,.
\end{align*}
Applying   Kolmogorov-Centsov's  tightness criterion (c.f.\ for instance\break \cite[Ex.~4.11, page 64]{KaratzasShreve1991}) we obtain tightness of the time integral of \eqref{eq_LN_1} in the space $C([0,T];\bb R)$, which implies tightness in the space 
$D([0,T];\bb R)$. Tightness of the time integrals of \eqref{eq_LN_2}, \eqref{eq_LN_3}, and \eqref{eq_LN_4} can be managed for $\theta\geq 1$ via Aldous' criterion.
\medskip

We have therefore deduced tightness of the integral term. 
Due to the fact that the sum of tight sequences of processes is a tight sequence, we complete the proof of tightness of $\{\<\pi^{N}_t ,H\>:t\in[0,T]\}_{N\geq 1}$.

\subsection{Characterization of limit points}\label{subsec:charac_SEP}
Due to the tightness proved in the previous section, the sequence $\bb Q_{\mu_N}^{\theta,N}$ is relatively compact. Denote by $\bb
Q_*^\theta$ a limit along a convergent subsequence, and without loss of generality, denote the convergent subsequence itself  by $\bb Q_{\mu_N}^{\theta,N}$. Our goal is to show that $\bb
Q_*^\theta$ is concentrated on solutions to the respective PDE according to the range of $\theta$.
Recall \eqref{empirical_ex}. 
Since there is at most one particle per site at the sites $x=1, \ldots, N$, it is easy to show that $\bb
Q_*^\theta$ is concentrated on trajectories 
\begin{equation}\label{eq:delta_dirac}
\pi_t(du) \;=\;
\xi(t)\,\delta_0(du)+ \rho(t,u)\, du\,,
\end{equation}
whose  density $\rho(t,\cdot)$ with respect to the Lebesgue measure is non-negative and bounded by
$1$, and $\xi(t)$ denotes the component with respect to the Delta of Dirac at zero. In all  cases below we will use the already proved fact \eqref{limprobmart} that the sequence of  martingales \eqref{M_ex} converges to zero for any $H\in C^2[0,1]$.\medskip

 $\bullet$ \textbf{Case $\theta\in[0,1)$.}
Here, we claim that $\bb Q_*^\theta$ is concentrated on trajectories $\pi_t(du) =\rho(t,u) du$ such that $\rho(t,\cdot)$ is a weak solution to \eqref{heat_Neumann}. In addition to $H\in C^2[0,1]$, assume that $H'(0)=H'(1)=0$.

Since $\theta\in[0,1) $, by the attractiveness property,  the sequence of processes $\{\frac{1}{N}\eta_t(0):t\in[0,T]\}_{N\geq 1}$ and $\{\frac{1}{N}\eta_0(0):t\in[0,T]\}_{N\geq 1}$
converge to zero in probability as $N\to \infty$, so the component of \eqref{eq:delta_dirac} in the Dirac delta measure is $\xi(t)\equiv 0$.  Since $\bb Q^{\theta,N}_{\mu_N}$ is assumed to converge weakly to $\bb Q_*^\theta$, then $\<\pi_t, H\>$ converges to $\<\rho_t, H\>$. By  \eqref{mu_N_ex}, we conclude that $\<\pi_0, H\>$ converges to $\<\gamma, H\>$.

Let us analyze the integral part of \eqref{M_ex}.
Since $\theta\in[0,1) $, from the attractiveness and the fact that $H'(0)=H'(1)=0$, the terms \eqref{eq_LN_1} and \eqref{eq_LN_2} and \eqref{eq_LN_4} converge to zero. Since $H\in C^2[0,1]$, the discrete Laplacian approximates the continuous Laplacian, so the integral in time of \eqref{eq_LN_3} converges to 
\begin{equation*}
\int_0^t \< \rho_s, H''\>\,ds\,.
\end{equation*}
Putting together all facts above, we have deduced that 
\begin{equation*}
\begin{split}
	\bb Q_{*}^\theta \Big[\, \pi_{\,\bigcdot}:\,
	&\big\<\rho_t, H\big\> - \big\<\gamma, H\big\>  -  \int_0^t \big\< \rho_s,   H'' \big\>\, ds
	=0,\,\forall t\in[0,T]\, \Big]=1
\end{split}
\end{equation*}
for any $H\in C^2[0,1]$ such that $H'(0)=H'(1)=0$. Taking a dense set of test functions and then intersecting a countable number of events of probability one, we arrive at
\begin{equation*}
\begin{split}
	\bb Q_{*}^\theta \Big[\, &\pi_{\,\bigcdot}:\,
	\big\<\rho_t, H\big\> - \big\<\gamma, H\big\>  -  \int_0^t \big\< \rho_s,   H'' \big\>\, ds
	=0,\\
	&\forall t\in[0,T]\,,\; \forall H\in C^2[0,1] \text{ such that } H'(0)=H'(1)=0\Big]\;=\;1\,.
\end{split}
\end{equation*}
\medskip

 $\bullet$ \textbf{Case $\theta=1$.}
Here, we claim that $\bb Q_*^\theta$ is concentrated on trajectories $\pi_t(du) =
\xi(t)\delta_0(du)+\rho(t,u) du$ such that $\rho(t,\cdot)$ is a weak solution to \eqref{non_linear_Dirichlet}.
Let $H\in C^2[0,1]$ such that $H(0)=H'(1)=0$. First, note that $H(0)=0$ implies that
\begin{equation*}
\< \pi_t, H\> \;=\; \sum_{x=0}^N \eta_t(x) H\big(\pfrac{x}{N}\big) \;=\; \sum_{x=1}^N \eta_t(x) H\big(\pfrac{x}{N}\big)\,,
\end{equation*}
hence the component $\xi(t)\delta_0(du)$ of \eqref{eq:delta_dirac} does not appear in the limit of $\<\pi_t, H\>$.
As before, the term \eqref{eq_LN_4} converges to zero because $H'(1)=0$ and  the time integral  of \eqref{eq_LN_3} converges to 
\begin{equation*}
\int_0^t \< \rho_s, H''\>\,ds\,.
\end{equation*}
The sum of  \eqref{eq_LN_1} and \eqref{eq_LN_2} is equal to  
\begin{align}\label{eq:eq5.16}
\frac{\alpha}{N^\theta}\eta_s(0)\big(1-\eta_s(1)\big)\cdot   H'(0) 
\end{align}
plus a negligible term of order $O(1/N)$. Denote $\msf{M}(N) = \sum_{x=0}^N\eta_0(x)$. Since $\theta=1$ and using the conservation of the number of particles, we can rewrite \eqref{eq:eq5.16} as 
\begin{align}
&\alpha\bigg[\frac{\msf{M}(N)}{N} - \frac{1}{N}\sum_{x=1}^N \eta_s(x) \bigg]\Big(1-\frac{1}{\eps N}\sum_{x=1}^{\eps N} \eta_s(x)\Big)\cdot   H'(0) \notag\\
&+\alpha\bigg[\frac{\msf{M}(N)}{N} - \frac{1}{N}\sum_{x=1}^N \eta_s(x) \bigg]\Big(-\eta_s(1)+\frac{1}{\eps N}\sum_{x=1}^{\eps N} \eta_s(x)\Big)\cdot   H'(0)\,. \label{2}
\end{align}
From \eqref{mu_N_ex}  and the Law of Large Numbers, we deduce that
\begin{equation*}
\frac{1}{N}\msf{M}(N) \;\longrightarrow\; \frac{1}{\alpha}\gamma(0)+ \int_0^1\gamma(u)\,du
\end{equation*}
 in probability as $N\to\infty$. Recall \eqref{eq:delta_dirac}.
Since $\bb Q^{\theta,N}_{\mu_N}$ is assumed to converge to~$\bb Q_*^\theta$, we have
\begin{equation*}
\frac{1}{N}\sum_{x=1}^N\eta_s(x) \;\longrightarrow\; \<\pi_s, 1\> - \xi(s) = \int_0^1\rho_s(u)\,du
\end{equation*}
and 
\begin{equation*}
\frac{1}{\eps N}\sum_{x=1}^{\eps N}\eta_s(x) = \<\pi_s, {\bf 1}_{(0,\eps)} \> \;\longrightarrow \;  
\frac{1}{\eps}\int_0^\eps\rho_s(u)\,du\,.
\end{equation*}
Recall  the statement of Theorem~\ref{thm:hydro_ex}  for $\theta=1$ about the continuous profile $\gamma:[0,1]\to [0,1]$ that has the additional assumption $0<\gamma(0)<1$. Due to this assumption, Proposition~\ref{prop:linear_entropy} about entropy growth holds, whose statement is a hypothesis in  Propositions~\ref{lem:replacement} and \ref{s05}.  
By the energy estimate given in Proposition~\ref{s05} we know that, under $\bb Q^\theta_*$ with probability one $\rho(t,u)\in \Sobolev$. So, a.e.\ in time the profile $\rho$ is absolutely continuous with respect to the Lebesgue measure, and in particular, it is continuous. Thus, 
\begin{equation*}
\frac{1}{\eps}\int_0^\eps\rho_t(u)\,du \;\longrightarrow\;  \rho_t(0)
\end{equation*}
for a.e.\ $t\in [0,T]$.  By the replacement lemma given in Proposition~\ref{lem:replacement}, the  time integral of the expression \eqref{2} converges to zero in $L^1$ as $N\to \infty$ and then $\eps \downarrow 0$. Putting all these facts together, the integral in time of the sum of \eqref{eq_LN_1} and \eqref{eq_LN_2} converges to
\begin{equation*}
\int_0^t \alpha \big(1-\rho_s(0)\big)\bigg(M- \int_0^1\rho_s(u)du\bigg)   H'(0)  \,ds\,,
\end{equation*}
where $M= \frac{1}{\alpha}\gamma(0)+ \int_0^1\gamma(u)du$. In summary, up to here we have proved that
\begin{equation*}
\begin{split}
	\bb Q_{*}^\theta \Big[\, &\pi_{\,\bigcdot}:\,
	\big\<\rho_t, H\big\> - \big\<\gamma, H\big\>  -  \int_0^t \big\< \rho_s,   H'' \big\>\, ds\\
	&-\int_0^t \alpha \big(1-\rho_s(0)\big)\bigg(M- \int_0^1\rho_s(u)du\bigg)   H'(0)  \,ds =0,\,\forall t\in[0,T]\Big]\;=\;1
\end{split}
\end{equation*}
for all $H\in C^2[0,1]$ such that $H(0)=H'(1)=0$. Intersecting a countable number of events of probability one then yields
\begin{equation*}
\begin{split}
	\bb Q_{*}^\theta \Big[\, &\pi_{\,\bigcdot}:\,
	\big\<\rho_t, H\big\> - \big\<\gamma, H\big\>  -  \int_0^t \big\< \rho_s,   H'' \big\>\, ds\\
	&-\int_0^t \alpha \big(1-\rho_s(0)\big)\bigg(M- \int_0^1\rho_s(u)du\bigg)   H'(0)  \,ds \;=\;0\,,\\
	&\forall t\in[0,T]\,,\; \forall H\in C^2[0,1] \text{ such that } H(0)=H'(1)=0\,\Big]\;=\;1\,.
\end{split}
\end{equation*}

\medskip
 $\bullet$ \textbf{Case $\theta\in(1,\infty)$.}
Here, we claim that $\bb Q_*^\theta$ is concentrated on trajectories $\pi_t(du) =
\xi(t)\delta_0(du)+\rho(t,u) du$ such that $\rho(t,\cdot)$ is a weak solution to \eqref{heat_dirichlet}.
The proof is analogous to the one given in the previous case noting that $\theta\in(1,\infty)$ makes the boundary term vanish in the limit.

\subsection{Uniqueness of weak solutions}\label{sub:uniqueness_SEP}
\begin{proof}[Proof of Proposition~\ref{prop:uniqueness_5.7}]
Let $\rho^1$ and $\rho^2$ be weak solutions of \eqref{heat_Neumann}. For $\xi=\rho^1-\rho^2$, it holds
\begin{equation*}
\<\xi_t, H\>  \;=\; \int_{0}^{t} \<\xi_s, H''\>\, ds\,,
\end{equation*}
for any $H\in C^2[0,1]$ such that $H'(0)=H'(1)=0$. Taking into account $\{\Phi_k\}_{k\geq 0}$,  the complete orthonormal basis of $L^2[0,1]$ given by
\begin{equation*}
\Phi_0(u) \;=\; 1\quad \text{ and } \quad \Phi_k(u) \;=\; \sqrt{2}\cos{(\pi k u)}\,, \quad\forall k\geq 1\,,
\end{equation*}
composed by eigenfunctions of the  Sturm-Liouville problem associated with the Laplacian operator under
Neumann boundary conditions
\begin{equation*}
\begin{cases}
	-f''(u) = \lambda f(u), & \text{ for } u\in (0,1),\\
	f'(0) = f'(1) = 0,
\end{cases}
\end{equation*}
the argument follows the same steps as in the proof of Proposition~\ref{prop:unique_4.3}. 
\end{proof}

The next lemma guarantees that a weak solution to  \eqref{non_linear_Dirichlet} satisfies almost surely in time the boundary condition of the corresponding strong solution.
\begin{lemma}\label{lemma:5.8}
Let $\rho$ be a weak solution to  \eqref{non_linear_Dirichlet}. Then,
\begin{equation*}
	\rho_t(0) \;=\; \alpha \big(1-\rho_t(0)\big) \Big(M - \int_0^1 \rho_t(u)\,du\Big)\,, 
\end{equation*}
for a.e.\ $t\in [0,T]$.
\end{lemma}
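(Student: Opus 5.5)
The plan is to extract the boundary condition from the weak formulation \eqref{eq:cond_3} by comparing it with the identity obtained by integrating by parts, which is legitimate because $\rho\in\Sobolev$.

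\textbf{Step 1: two expressions for the same quantity.} Fix $H\in C^2[0,1]$ with $H(0)=H'(1)=0$. For a.e.\ $s$ we have $\rho_s\in\mc H^1(0,1)$, so $\rho_s$ is absolutely continuous with trace $\rho_s(0)$. Integrating by parts twice and using $H(0)=H'(1)=0$ gives
\begin{equation*}
\<\rho_s,H''\> \;=\; -\rho_s(0)H'(0)-\<\p_u\rho_s,H'\>\,.
\end{equation*}
Inserting this into \eqref{eq:cond_3} yields
\begin{equation*}
\<\rho_t,H\>-\<\gamma,H\> \;=\; -\int_0^t\<\p_u\rho_s,H'\>\,ds+H'(0)\int_0^t g(s)\,ds\,,
\end{equation*}
where
\begin{equation*}
g(s)\;=\;\alpha\big(1-\rho_s(0)\big)\Big(M-\int_0^1\rho_s\Big)-\rho_s(0)\,.
\end{equation*}
The lemma is exactly the statement $g=0$ a.e.

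\textbf{Step 2: isolating the boundary term.} Choose test functions $H_\eps$ with $H_\eps(0)=0$, $H_\eps'(1)=0$, $H_\eps'(0)=1$, $\|H_\eps\|_\infty\le \eps$, and $H_\eps'$ supported in $[0,\eps]$ with $|H_\eps'|\le 1$. A smoothed version of $H_\eps(u)=u$ on $[0,\eps/2]$, then flattened, will do. For such functions the left-hand side is $O(\eps)$ uniformly in $t$, since $0\le\rho\le1$. The gradient term satisfies
\begin{equation*}
\Big|\int_0^t\<\p_u\rho_s,H_\eps'\>\,ds\Big|\;\le\;\int_0^t\int_0^\eps|\p_u\rho_s|\,du\,ds\;\le\;\sqrt{t\eps}\,\|\p_u\rho\|_{L^2}\;\to\;0\,.
\end{equation*}
Hence $\int_0^t g(s)\,ds=0$ for every $t\in[0,T]$.

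\textbf{Step 3: conclusion.} The function $g$ is in $L^1[0,T]$, because $\rho$ is bounded and the trace is bounded by the $\mc H^1$ norm, which lies in $L^2$ in time. Its primitive vanishes identically, so by the Lebesgue differentiation theorem $g(t)=0$ for a.e.\ $t$. This is the claimed identity.

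The main obstacle is Step 1: justifying the integration by parts pointwise a.e.\ in $s$ with the trace $\rho_s(0)$, together with the measurability and integrability in $s$ of $\rho_s(0)$. I would handle this via the Sobolev definition tested against $G(s,u)=a(s)H'(u)\phi_\delta(u)$ with cutoffs $\phi_\delta$ near the boundary, then let $\delta\to0$ using continuity of the $\mc H^1$ representative.
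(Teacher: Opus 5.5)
Your proposal is correct and follows essentially the same route as the paper. The paper likewise integrates by parts using $\rho\in\Sobolev$ (citing \cite[Lemma~7.1]{fgn1}) and tests with functions behaving like $u\wedge\eps$ near the origin with $H'(0)=1$, namely the mollified $G_n(u)=u\mathbf{1}_{\{u\le 1/n\}}+\frac1n\mathbf{1}_{\{u>1/n\}}$; the bulk terms then vanish and the two time integrals agree for every $t$, hence the integrands agree a.e. Your Cauchy--Schwarz bound on the gradient term and the closing Lebesgue differentiation step make explicit what the paper leaves implicit.
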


\begin{proof}
Let $G_n:\bb R\to \bb R$ be the function defined by $G_n(u) = u\mathbf{1}_{\{u\leq \frac{1}{n}\}} + \frac{1}{n}\mathbf{1}_{\{u> \frac{1}{n}\}}$. 
In Figure~\ref{fig:H} we illustrate $G_n$ and its derivative $G_n'$.

Let $\Psi:\bb R \to \bb R$ be an even $C^\infty$-approximation of identity with support on the interval $[-1, 1]$, see \cite[Appendix~C4, page 629]{E} for an example. Let $\Psi_\eps(u) := \frac{1}{\eps}\Psi(\frac{u}{\eps})$ be the standard mollifier, whose support is contained in $[-\eps,\eps]$. 

Define  the test function $H_n:[0,1]\to \bb R$ by 
\begin{equation}\label{eq:Hn}
	H_n(u) \;=\; \big(G_n * \Psi_{1/n^2}\big)(u)\,,
\end{equation}
the restriction of the convolution to the interval $[0,1]$.  It is straightforward to check that $H_n\in C^\infty[0,1]$, $H_n$ coincides with $G_n$ in the set $[0,1] \setminus [\frac{1}{n} -\frac{1}{n^2}, \frac{1}{n} +\frac{1}{n^2}]$, $H_n(0)=H_n'(1)=0$,  $0\leq H_n \leq \frac{1}{n}$, $0\leq H_n' \leq 1$ and $H_n'(u)=0$ for $u>\frac{1}{n}+\frac{1}{n^2}$. 
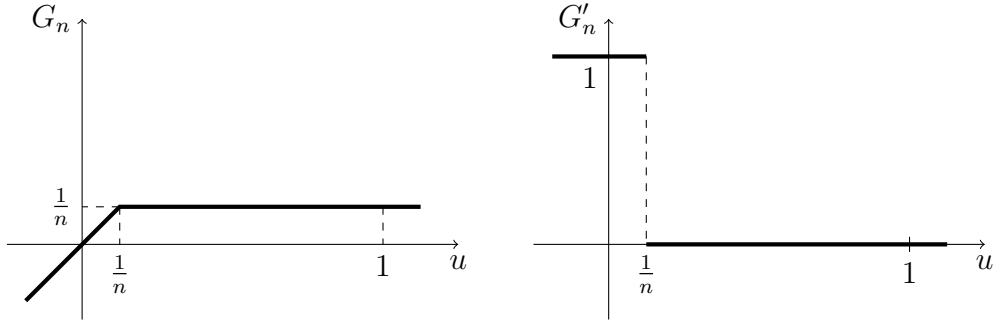
\begin{figure}[H]
	\begin{tikzpicture}[scale=1]
		\begin{scope}
			\draw[->] (-1,0) -- (5,0) node[below] {$u$};
			\draw[->] (0,-1) -- (0,3) node[left]{$G_n$};
			\draw[ultra thick] (-0.75, -0.75) -- (0.5, 0.5) -- (4.5,0.5);
			\draw[dashed] (4,0.5) -- (4,0) node[below]{$1$};
			\draw[dashed] (0.5,0.5) -- (0.5,0) node[below]{$\frac{1}{n}$}; 
			\draw[dashed] (0.5,0.5) -- (0,0.5) node[left]{$\frac{1}{n}$}; 
		\end{scope}
		
		\begin{scope}[xshift=7cm]
			\draw[->] (-1,0) -- (5,0) node[below] {$u$};
			\draw[->] (0,-1) -- (0,3) node[left]{$G_n'$};
			\draw[ultra thick] (-0.75, 2.5) -- (0.5, 2.5);
			\draw[ultra thick] (0.5, 0) -- (4.5, 0);
			\draw (4,0.1)--(4,-0.1) node[below]{$1$};
			\draw (0,2.5) node[anchor = north east]{$1$}; 
			\draw[dashed] (0.5,2.5) -- (0.5,0) node[below]{$\frac{1}{n}$}; 
		\end{scope}
		
	\end{tikzpicture}
	\caption{The function $G_n$ and its derivative $G_n'$. The function $H_n := G_n * \Psi_{1/n^2}$
		is a smoothed version of $G_n$ around the point $u=1/n$.}
	\label{fig:H}
\end{figure}

Let $\rho$ be a weak solution to  \eqref{non_linear_Dirichlet}. Hence, the integral equation \eqref{eq:cond_3} holds for any time $t\in[0,T]$ and for any $H\in C^2[0,1]$ such that $H(0)=H'(1)=0$. Since $\rho\in \Sobolev$, applying an integration by parts for Sobolev spaces (see \cite[Lemma~7.1]{fgn1} for instance) and using that $H'(1)=0$, we obtain 
\begin{align*}
		\big\<\rho_t, H\big\> - \big\<\gamma, H\big\>  = & -\int_0^t \big\< \p_u\rho_s,  H' \big\>\, ds - \int_0^t  \rho_s(0) H'(0)\, ds\\
		& + \int_0^t \alpha \big(1-\rho_s(0)\big)\bigg(M- \int_0^1\rho_s(u)du\bigg)   H'(0)  \,ds
\end{align*}
for any time $t\in[0,T]$ and for any $H\in C^2[0,1]$ such that $H(0)=H'(1)=0$.
Choosing the test function as the function $H_n$ defined in \eqref{eq:Hn}, taking the limit as $n\to \infty$ and noting that $H_n$ and $H_n'$ converge to zero in $L^2[0,1]$, we arrive at
\begin{equation*}
	\begin{split}
		\int_0^t  \rho_s(0)\, ds \;=\;\int_0^t \alpha \big(1-\rho_s(0)\big)\bigg(M- \int_0^1\rho_s(u)du\bigg)  \,ds
	\end{split}
\end{equation*}
for any time $t\in[0,T]$, concluding the proof of the lemma.
\end{proof}

\begin{proof}[Proof of Proposition~\ref{prop:unique_non_linear_Dirichlet}]
For ease of notation, denote
\begin{equation*}
m_s \;:=\; M - \int_0^1\rho_s(u)\,du\,,
\end{equation*}
which represents the mass in the reservoir. Since weak solutions of \eqref{non_linear_Dirichlet}  satisfy $0\leq \rho \leq 1$,
Lemma~\ref{lemma:5.8} ensures that $m_s\geq 0$ a.e.\ in time.
Lemma~\ref{lemma:5.8} also implies that
\begin{equation}\label{eq:boundary_rho}
\rho_s(0) \;=\; \frac{\alpha m_s}{1 + \alpha m_s} 
\end{equation}
a.e.\ in time.
Let $\rho^{1}$ and $\rho^{2}$ be two weak solutions of  \eqref{non_linear_Dirichlet} and  denote $\xi =\rho^{1} - \rho^{2} \in \Sobolev$, which satisfies the integral equation
\begin{equation}\label{eq:cond_x}
\begin{split}
	&\big\<\xi_t, H\big\>   =  \int_0^t \big\< \xi_s,  H'' \big\>\, ds\\
	& -\alpha H'(0)\int_0^t\bigg[
	\Big(M -  \int_0^1 \rho^1_s(u)du\Big) \xi_s(0) + \Big(1-\rho^2_s(0)\Big)\int_0^1\xi_s(u)du \bigg]ds
\end{split}
\end{equation}
for any time $t\in[0,T]$ and for any $H\in C^2[0,1]$ such that $H(0)=H'(1)=0$.
Denote also $m^1_s$ and $m^2_s$ the corresponding masses in the reservoir. 
Both $\rho^1$ and $\rho^2$ satisfy  \eqref{eq:boundary_rho}, thus:
\begin{equation}\label{eq:A}
\begin{split}
	\xi_s(0) & \;=\; \frac{\alpha m^1_s}{1 + \alpha m^1_s} - \frac{\alpha m^2_s}{1 + \alpha m^2_s} 
     \;=\; \frac{\alpha A_s}{(1 + \alpha m_s^1)(1 + \alpha m_s^2)}
\end{split}
\end{equation}
a.e.\ in time, where  $A_s := m_s^1 - m_s^2 = - \int_0^1\xi_s(u)du$.
Applying   \eqref{eq:A} and the equality $1 - \rho^2_s(0) = \frac{1}{1 + \alpha m_s^2}$, we can rewrite the expression inside the bracket of \eqref{eq:cond_x}, to get
\begin{align*}
m^1_s \xi_s(0) - (1 - \rho_s^2(0)) A_s  & \;=\;  \frac{\alpha   m_s^1 A_s}{(1 + \alpha m_s^1) (1 + \alpha m_s^2)} - \frac{A_s}{1 + \alpha m_s^2}\\
&\;=\;   \frac{- A_s}{(1 + \alpha m_s^1)(1 + \alpha m_s^2)} \;=\;  -\Gamma_s A_s\,,
\end{align*}
where $\Gamma_s := \frac{1}{(1 + \alpha m_s^1)(1 + \alpha m_s^2)}> 0$.
Therefore, equation \eqref{eq:cond_x} rewrites as
\begin{equation}\label{eq:simplifies}
\big\<\xi_t, H\big\>  \; = \; \int_0^t \big\< \xi_s,  H'' \big\>\, ds + \alpha H'(0) \int_0^t \Gamma_s A_s \,ds\,.
\end{equation}
Recall the eigenfunctions $\{\Psi_k\}_{k\geq 0}$ defined in \eqref{Psi_k}. By \eqref{eq:simplifies},  
\begin{equation*}
c_k'(t) \;=\; -\lambda_k c_k(t) + \alpha \Psi'_k(0) \Gamma_t A_t\,.
\end{equation*}
Considering the energy functional $\mc E(t) := \sum_{k\geq 0}\frac{c_k(t)^2}{2\lambda_k}$, we obtain that
\begin{equation*}
\mc E'(t) \;=\; \sum_{k\geq 0}\frac{c_k(t){c}_k'(t)}{\lambda_k} \;=\; - \sum_{k \geq 0} c_k^2 + \alpha \Gamma_t A_t \sum_{k \geq 0} \frac{\Psi'_k(0)}{\lambda_k} c_k\,.
\end{equation*}
From  \eqref{eq:one}, \eqref{eq:inverse_Psi} and \eqref{eq:a2}, we know that $\frac{\Psi'_k(0)}{\lambda_k} = \frac{2}{\Psi'_k(0)}$ and $\sum_{k \geq 0} \frac{2}{\Psi'_k(0)} c_k = \<\xi_t, 1\>= - A_t$. Since $\Gamma_t > 0$, we obtain
\begin{equation*}
\mc E'(t) \;=\; - \Vert \xi_t\Vert_2^2 - \alpha \Gamma_t A_t^2 \;\leq\; 0\,,
\end{equation*}
leading to the proof of uniqueness of weak solutions to  \eqref{non_linear_Dirichlet}.
\end{proof}

\begin{proof}[Proof of Proposition~\ref{prop:uniqueness_5.10}]
The proof of Proposition~\ref{prop:unique_4.3}  applies here \textit{ipsis litteris}.
\end{proof}

\newcommand{\constante}{0.9}

\section{Computational simulations}\label{sec:sec7}
We provide here some computational simulations of both processes considered in this paper. The code was written in Python with the assistance of the AI assistant Claude (Anthropic), and is available in \cite{simulationcode}. The authors designed the experiments and checked their output against the theoretical results obtained in this paper. The authors made no other use of AI anywhere in this work.

In all cases, the initial profile is given by
$\gamma(x)= \frac{1}{2}+x(1-x)$, and the scaling parameter is $N=64$.
The dots in the graphs correspond to an approximation of $\mathbb{E}[\eta_t(x)]$ via the Law of Large Numbers. Each process is run $25600$ times, and the initial configuration is redrawn for each run. The initial number of particles in the reservoir is of order $N^\theta/\alpha$ and is normalized by this value, in agreement with Remark~\ref{rmk:rmk3.3}.
The empirical mean profiles are shown at macroscopic times $t=0$, $t=0.1$, $t=0.2$, $t=0.5$, and $t=1$, and the colored bars on the left represent the normalized reservoir values at the same times. The legends list each time together with the corresponding reservoir value relative to its initial value.
\begin{figure}[H]
    \centering
    \includegraphics[width=\constante\linewidth]{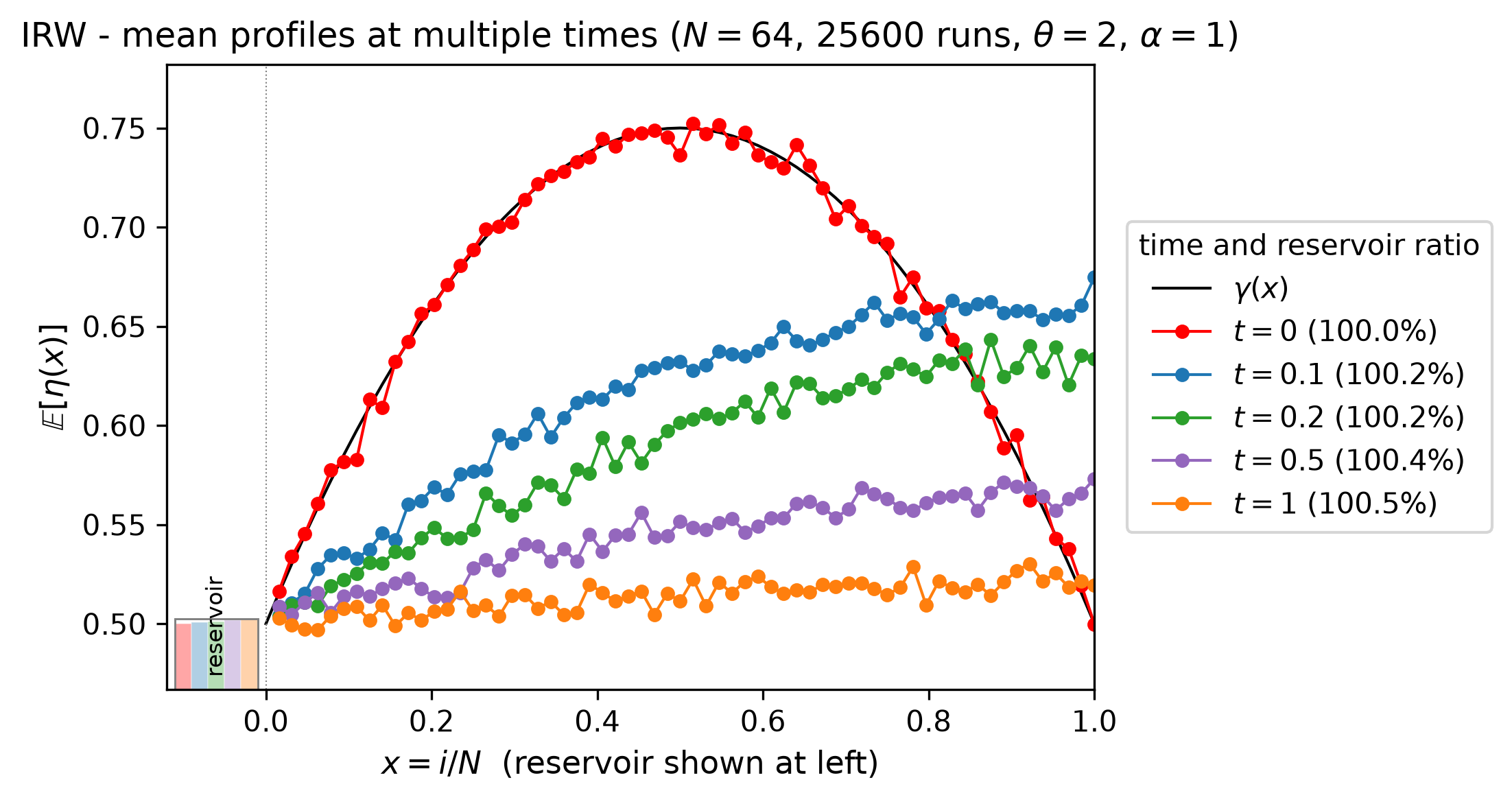}
    \caption{A simulation of IRW in contact with a slow finite reservoir in the \textit{supercritical} regime $\theta=2$. Note that the left boundary above agrees with the expected Dirichlet boundary condition as in \eqref{weak_solution_Dirichlet}. That is, the left-side limit of the profile is constant in time and equal to $\gamma(0)=0.5$, where $\gamma$ is the initial profile.}
    \label{fig:irw_theta_2}
\end{figure}
\begin{figure}[H]
    \centering
    \includegraphics[width=\constante\linewidth]{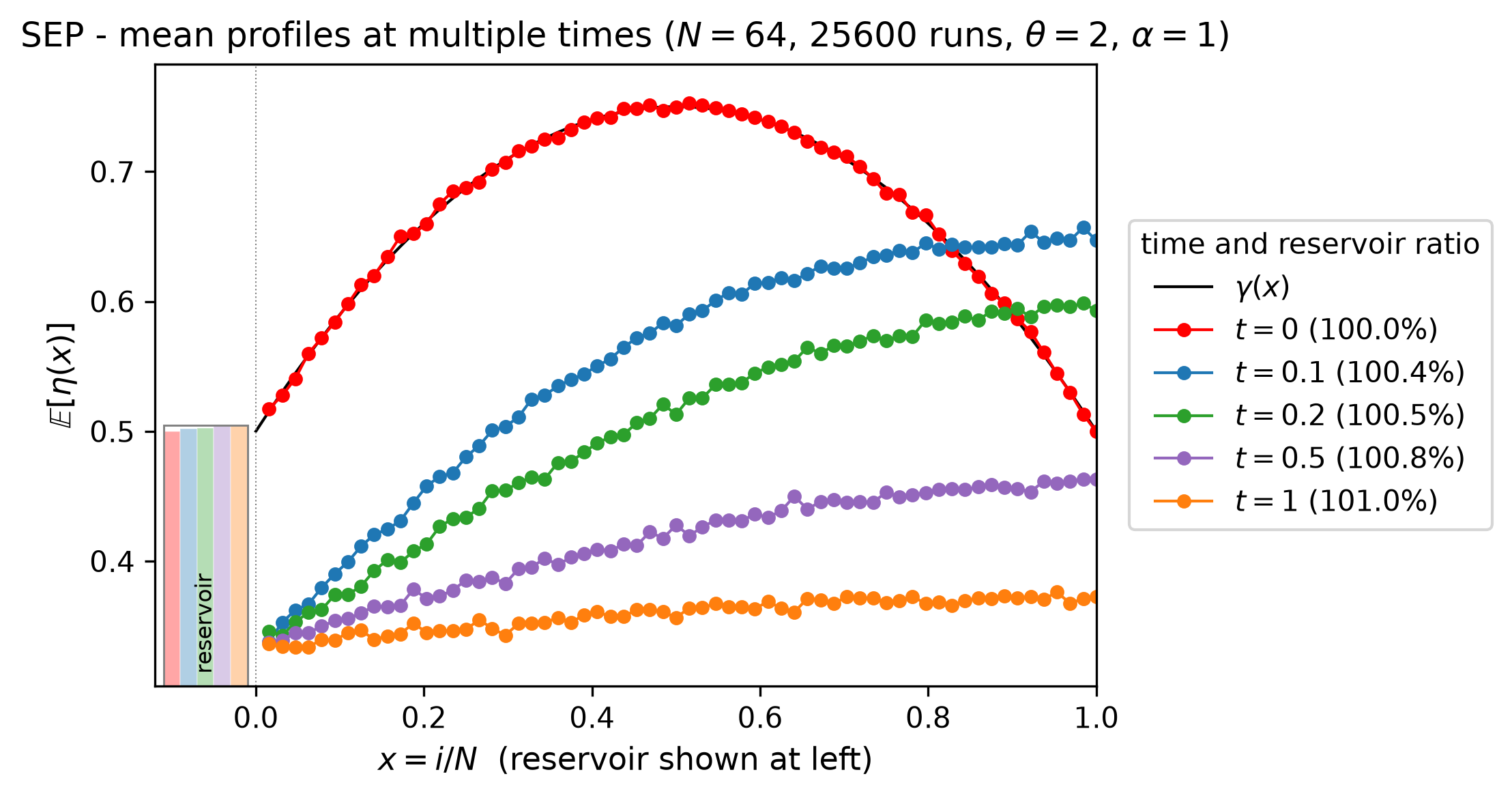}
    \caption{A simulation of SEP in contact with a slow finite reservoir in the \textit{supercritical} regime $\theta=2$. Note that picture agrees with the homogeneous Dirichlet boundary condition, as in \eqref{heat_dirichlet}. Moreover, picture seems to be smoother in comparison with the one in previous picture. This is due to the different nature of processes and different variances of the corresponding invariant measures (which influence the nature of the local equilibrium).}
    \label{fig:sep_theta_2}
\end{figure}
\begin{figure}[H]
    \centering
    \includegraphics[width=\constante\linewidth]{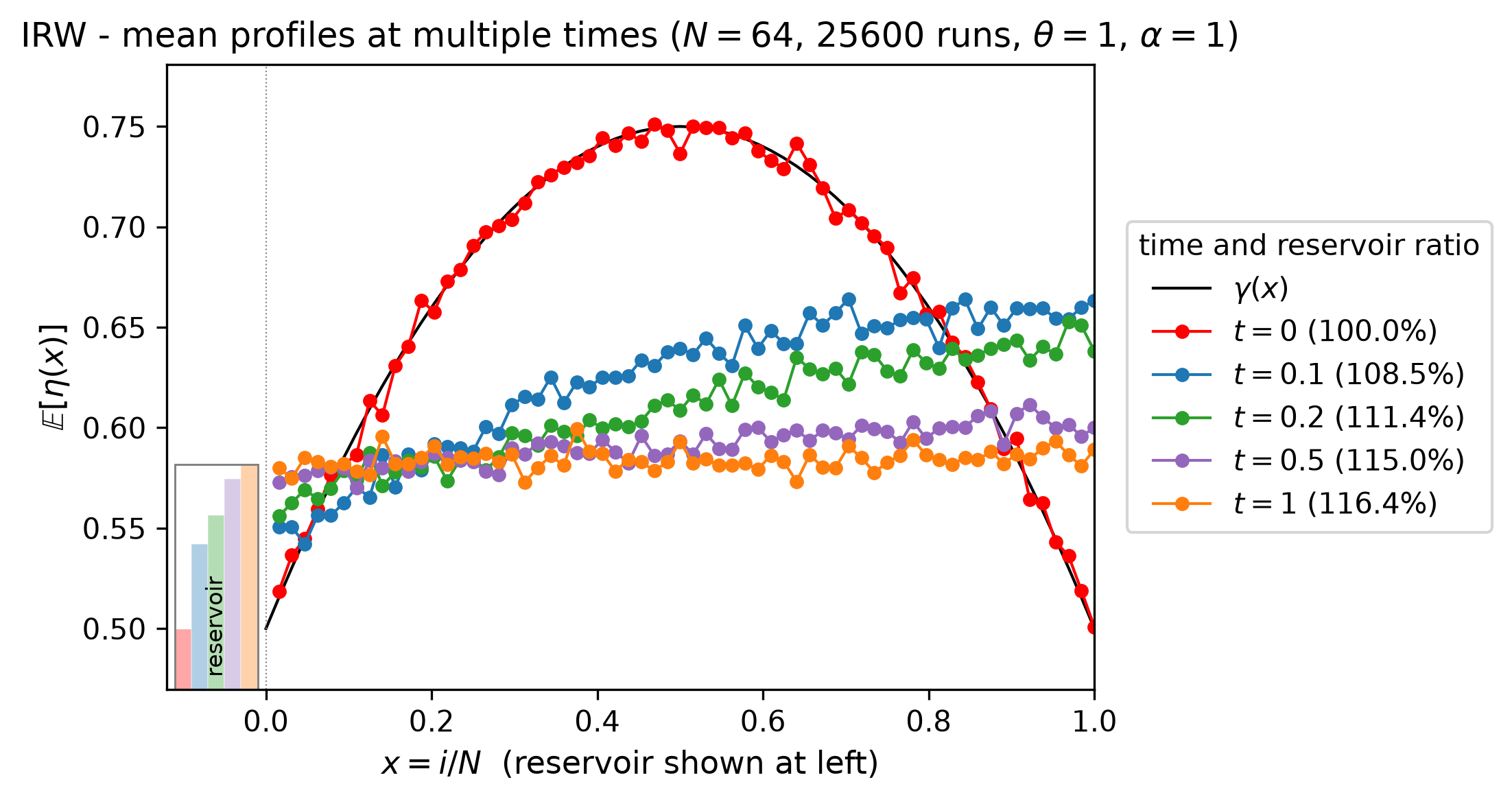}
    \caption{A simulation of IRW in contact with a slow finite reservoir in the \textit{critical} regime $\theta=1$. Note that the both the total (normalized) mass in the reservoir and the total mass in the bulk evolve on time, as expected from the PDE \eqref{weak_solution_non_local}.}
    \label{fig:irw_theta_1}
\end{figure}
\begin{figure}[H]
    \centering
    \includegraphics[width=\constante\linewidth]{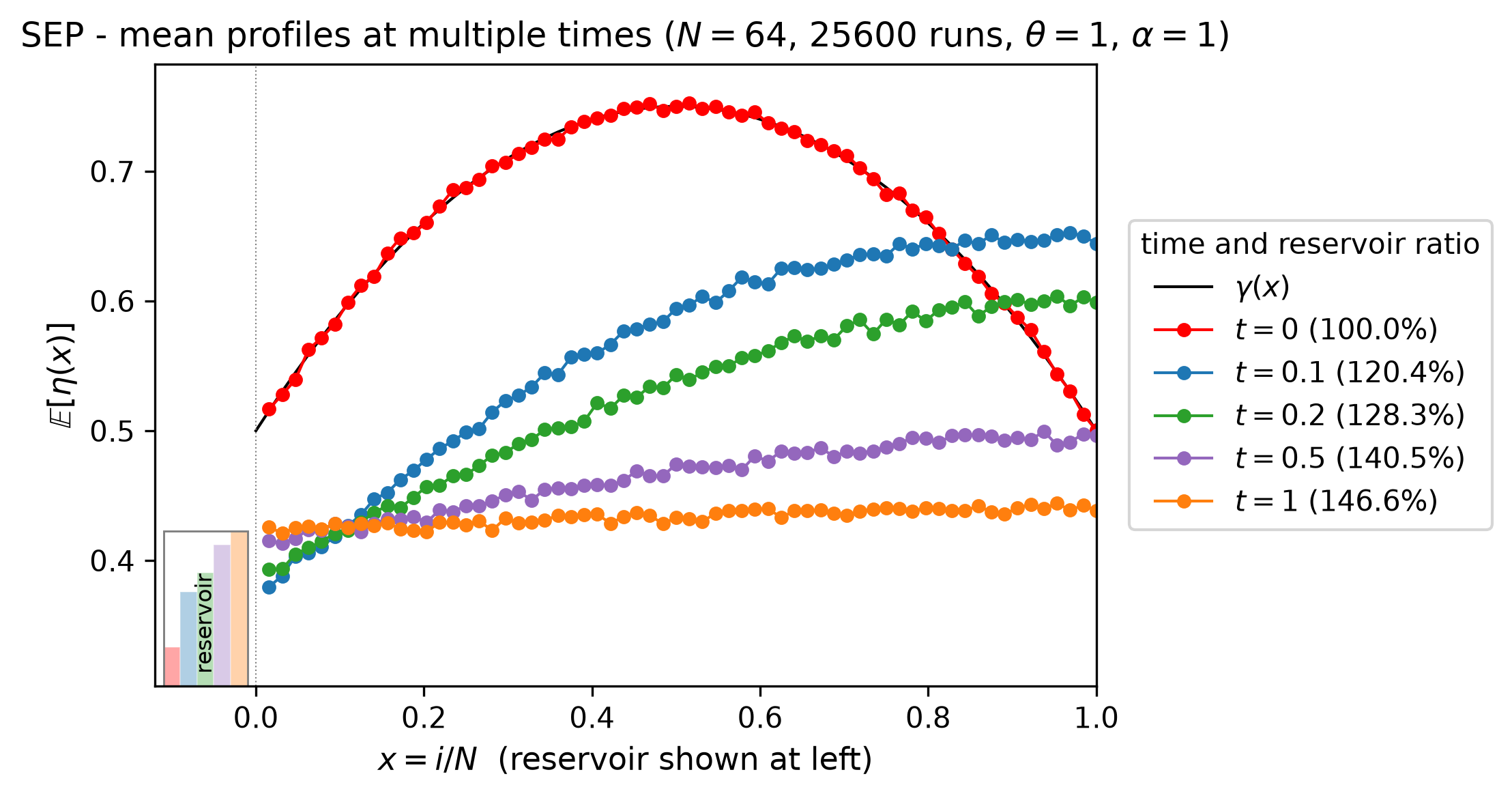}
    \caption{A simulation of SEP in contact with a slow finite reservoir in the \textit{critical} regime $\theta=1$. Note that both the total (normalized) mass in the reservoir and the total mass in the bulk evolve in time, in agreement with the PDE \eqref{non_linear_Dirichlet}.}
    \label{fig:sep_theta_1}
\end{figure}
\begin{figure}[H]
    \centering
    \includegraphics[width=\constante\linewidth]{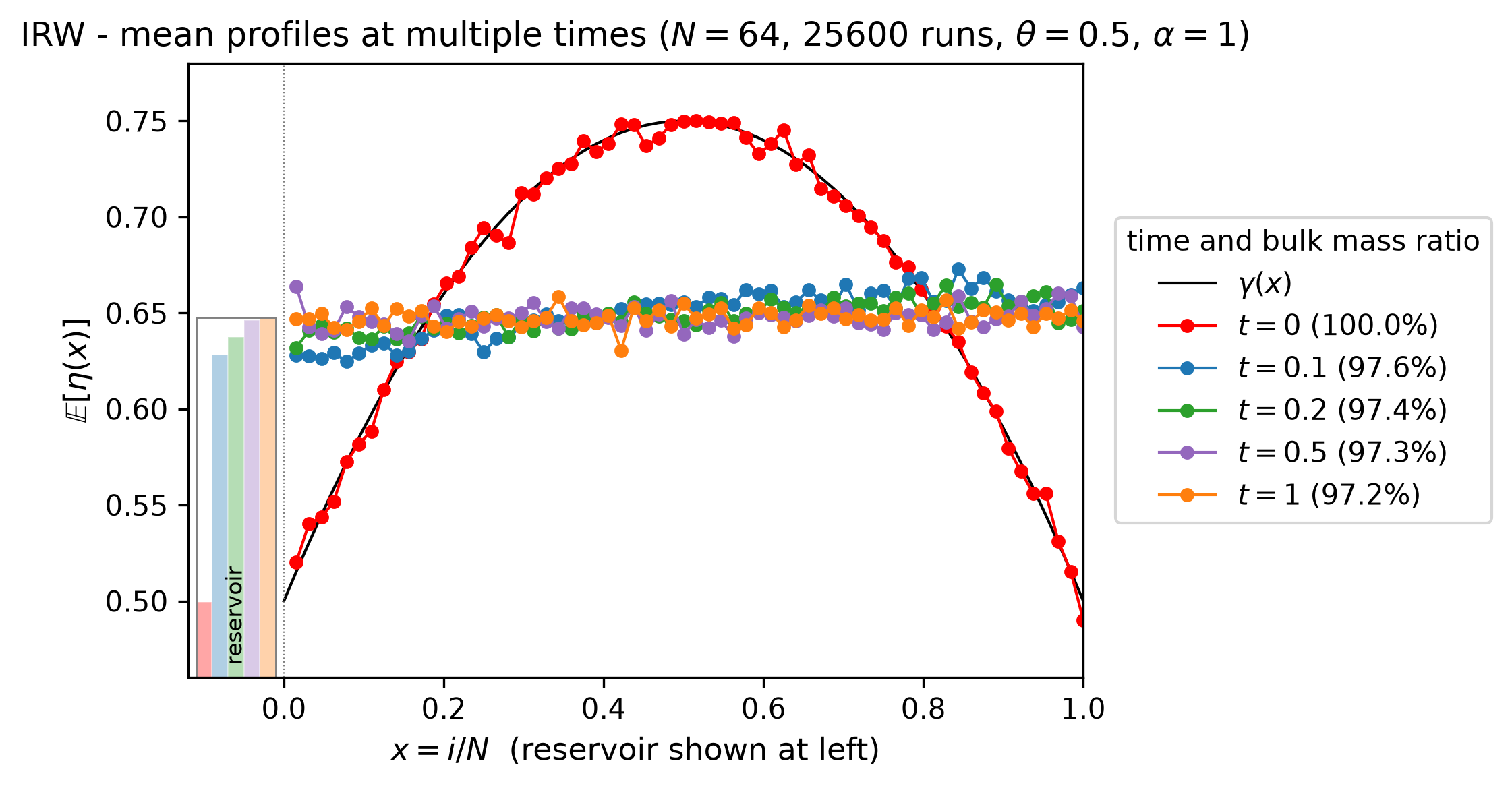}
    \caption{A simulation of IRW in contact with a slow finite reservoir in the \textit{subcritical} regime $\theta=0.5$. Note that the total mass variation in the bulk is not significant, in agreement with the expected Neumann boundary conditions \eqref{heat_Neumann}. On the other hand, the reservoir exhibits a large variation, which is natural in view of the fact that the initial mass in the bulk is of a larger order than the initial mass in the reservoir.}
    \label{fig:irw_theta_0p5}
\end{figure}
\begin{figure}[H]
    \centering
    \includegraphics[width=\constante\linewidth]{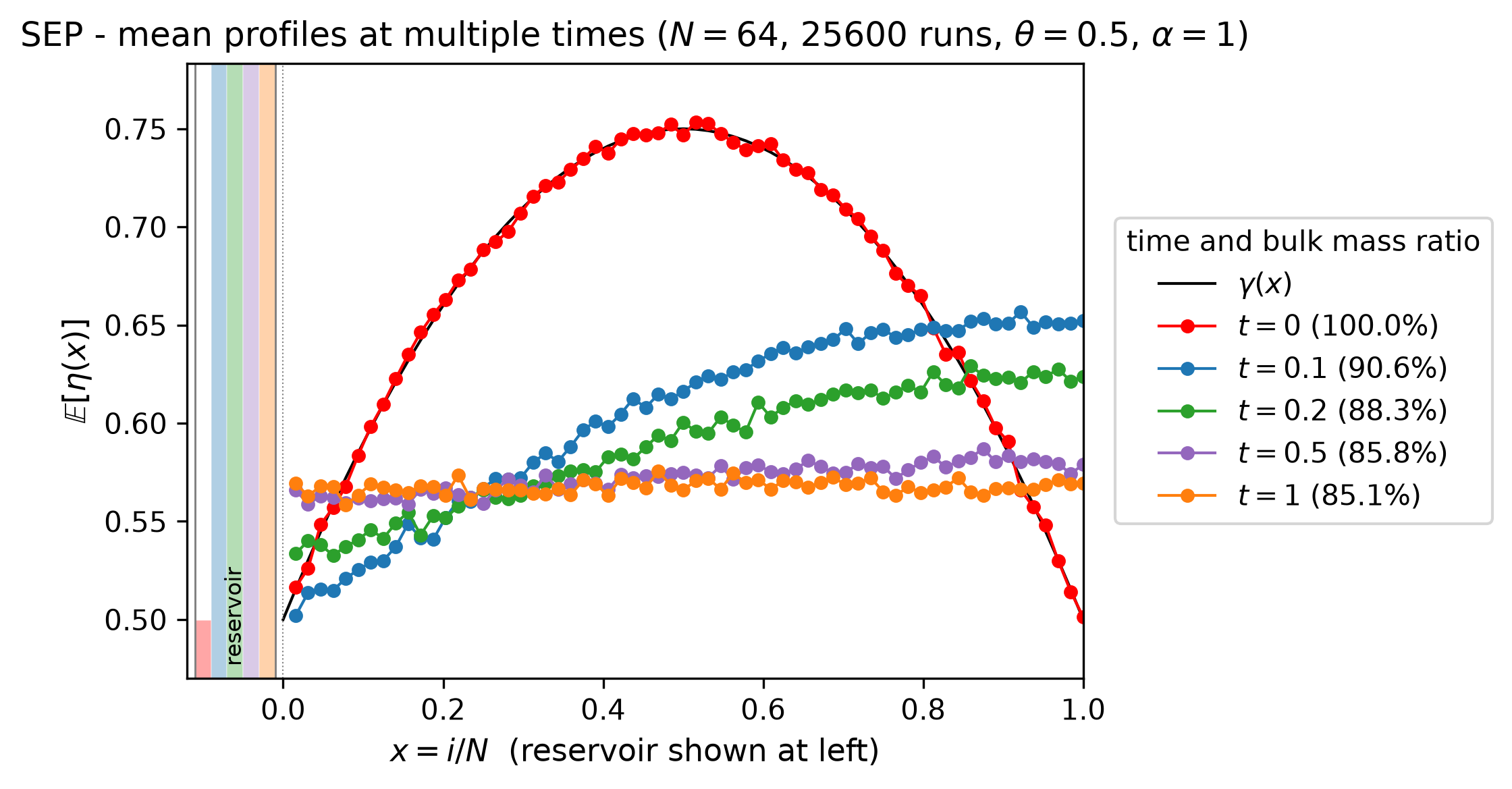}
    \caption{A simulation of SEP in contact with a slow finite reservoir in the \textit{subcritical} regime $\theta=0.5$. Since $\theta=0.5$, the total mass in the bulk is of a larger order than the initial total mass in the reservoir, which explains the abrupt variation of the mass in the reservoir.  On the other hand, here the total mass in the bulk exhibited significant variation over time, which we believe is due to 
$N=64$ not being sufficiently large.}
    \label{fig:sep_theta_0p5}
\end{figure}

\section*{Acknowledgements}
T.F.\ was supported by the National Council for Scientific and Technological Development - CNPq via  Universal Grants (Grant Numbers 406001/2021-9 and 401314/2025-1) and  Bolsa de Produtividade number 306554/2024-0; by FAPESB (EDITAL FAPESB Nº 012/2022 - UNIVERSAL - NºAPP0044/2023).

\bibliography{bibliografia}

\begin{thebibliography}{10}

\bibitem{Aldous}
D.~Aldous.
\newblock {Stopping Times and Tightness}.
\newblock {\em The Annals of Probability}, 6(2):335 -- 340, 1978.

\bibitem{BaldassoMenezesNeumann2017}
R.~Baldasso, O.~Menezes, A.~Neumann, and R.~Rangel.
\newblock Exclusion process with slow boundary.
\newblock {\em Journal of Statistical Physics}, 167(5):1112--1142, 2017.

\bibitem{bertini_landim_mourragui}
L.~Bertini, C.~Landim, and M.~Mourragui.
\newblock {Dynamical large deviations for the boundary driven weakly asymmetric
  exclusion process}.
\newblock {\em The Annals of Probability}, 37(6):2357 -- 2403, 2009.

\bibitem{Boccardo1996}
L.~Boccardo, A.~Dall'Aglio, and T.~Gallou{\"e}t.
\newblock Nonlinear parabolic equations with {W}entzell boundary conditions.
\newblock {\em Journal of Differential Equations}, 127(2):429--456, 1996.

\bibitem{BouleyLandim2024}
A.~Bouley and C.~Landim.
\newblock Dynamical large deviations for boundary driven gradient symmetric
  exclusion processes in mild contact with reservoirs.
\newblock {\em Journal of Statistical Physics}, 191(10):132, 2024.

\bibitem{Daners2008}
D.~Daners.
\newblock Robin and {W}entzell boundary conditions for elliptic equations.
\newblock {\em Journal of Functional Analysis}, 255(11):3163--3197, 2008.

\bibitem{DeMasi_Presutti}
A.~De~Masi and E.~Presutti.
\newblock {\em Mathematical methods for hydrodynamic limits}, volume 1501 of
  {\em Lecture Notes in Mathematics}.
\newblock Springer-Verlag, Berlin, 1991.

\bibitem{Durrett}
R.~Durrett.
\newblock {\em {Probability: Theory and Examples}}, volume~31 of {\em Cambridge
  Series in Statistical and Probabilistic Mathematics}.
\newblock Cambridge University Press, Cambridge, fourth edition, 2010.

\bibitem{efgnt}
D.~Erhard, T.~Franco, P.~Gon\c{c}alves, A.~Neumann, and M.~Tavares.
\newblock Non-equilibrium fluctuations for the ssep with a slow bond.
\newblock {\em Annales de l'Institut Henri Poincar\'e. Probabilit\'es et
  Statistiques}, 56(2):1099--1128, 2020.

\bibitem{EK}
S.~N. Ethier and T.~G. Kurtz.
\newblock {\em Markov processes}.
\newblock Wiley Series in Probability and Mathematical Statistics: Probability
  and Mathematical Statistics. John Wiley \& Sons, Inc., New York, 1986.
\newblock Characterization and convergence.

\bibitem{E}
L.~C. Evans.
\newblock {\em {Partial differential equations}}, volume~19 of {\em {Graduate
  Studies in Mathematics}}.
\newblock American Mathematical Society, Providence, RI, 1998.

\bibitem{Favini2002}
A.~Favini, G.~R. Goldstein, J.~A. Goldstein, and S.~Romanelli.
\newblock The heat equation with generalized {W}entzell boundary conditions.
\newblock {\em Journal of Evolution Equations}, 2(1):1--19, 2002.

\bibitem{Favini2004}
A.~Favini, G.~R. Goldstein, J.~A. Goldstein, and S.~Romanelli.
\newblock C\textsubscript{0}-semigroups generated by second order differential
  operators with {W}entzell boundary conditions.
\newblock {\em Proceedings of the American Mathematical Society},
  132(1):235--244, 2004.

\bibitem{Feller1952}
W.~Feller.
\newblock The parabolic differential equations and associated semi-groups of
  transformations.
\newblock {\em Annals of Mathematics. Second Series}, 55(3):468--519, 1952.

\bibitem{Feller1954}
W.~Feller.
\newblock Diffusion processes in one dimension.
\newblock {\em Annals of Mathematics}, 60(2):199--234, 1954.

\bibitem{Fontes_Isopi_Newman}
L.~R.~G. Fontes, M.~Isopi, and C.~M. Newman.
\newblock {Random walks with strongly inhomogeneous rates and singular
  diffusions: convergence, localization and aging in one dimension}.
\newblock {\em The Annals of Probability}, 30(2):579 -- 604, 2002.

\bibitem{simulationcode}
M.~Franco, T.~Franco, and P.~Gon\c{c}alves.
\newblock Simulation code for ``finite reservoirs lead to wentzell boundary
  conditions for independent random walks and exclusion processes''.
\newblock GitHub repository, 2026.

\bibitem{FrancoGoncalvesLandimNeumann2022}
T.~Franco, P.~Gon{\c{c}}alves, C.~Landim, and A.~Neumann.
\newblock Dynamical large deviations for the boundary driven symmetric
  exclusion process with {R}obin boundary conditions.
\newblock {\em ALEA, Latin American Journal of Probability and Mathematical
  Statistics}, 19:1497--1546, 2022.

\bibitem{fgn1}
T.~Franco, P.~Gon\c{c}alves, and A.~Neumann.
\newblock Hydrodynamical behavior of symmetric exclusion with slow bonds.
\newblock {\em Ann. Inst. H. Poincar\'{e} Probab. Statist.}, 49(2):402--427,
  2013.

\bibitem{fgn2}
T.~Franco, P.~Gon\c{c}alves, and A.~Neumann.
\newblock Phase transition in equilibrium fluctuations of symmetric slowed
  exclusion.
\newblock {\em Stoch. Proc. Appl.}, 123(12):4156--4185, 2013.

\bibitem{fgn3}
T.~Franco, P.~Gon\c{c}alves, and A.~Neumann.
\newblock {Phase transition of a heat equation with {R}obin's boundary
  conditions and exclusion process}.
\newblock {\em Trans. Amer. Math. Soc.}, 367:6131--6158, 2015.

\bibitem{fgschutz2015}
T.~Franco, P.~Gon\c{c}alves, and G.~M. Sch{\"u}tz.
\newblock Scaling limits for the exclusion process with a slow site.
\newblock {\em Stoch. Proc. Appl.}, 2015.

\bibitem{FN}
T.~Franco and A.~Neumann.
\newblock Large deviations for the exclusion process with a slow bond.
\newblock {\em Ann. Appl. Probab.}, 27(6):3547--3587, 2017.

\bibitem{FrancoTavares2019}
T.~Franco and M.~Tavares.
\newblock Hydrodynamic limit for the {SSEP} with a slow membrane.
\newblock {\em Journal of Statistical Physics}, 175(2):233--268, 2019.

\bibitem{Goldstein2006}
G.~R. Goldstein.
\newblock Derivation and physical interpretation of general boundary
  conditions.
\newblock {\em Advances in Differential Equations}, 11(4):419--456, 2006.

\bibitem{jaralandimteixeira}
M.~Jara, C.~Landim, and A.~Teixeira.
\newblock {Quenched scaling limits of trap models}.
\newblock {\em The Annals of Probability}, 39(1):176 -- 223, 2011.

\bibitem{KaratzasShreve1991}
I.~Karatzas and S.~E. Shreve.
\newblock {\em Brownian Motion and Stochastic Calculus}, volume 113 of {\em
  Graduate Texts in Mathematics}.
\newblock Springer, New York, 2 edition, 1991.

\bibitem{kl}
C.~Kipnis and C.~Landim.
\newblock {\em {Scaling limits of interacting particle systems}}, volume 320 of
  {\em {Grundlehren der mathematischen Wissenschaften}}.
\newblock Springer-Verlag Berlin Heidelberg, 1st edition, 1999.

\bibitem{Liang2004}
J.~Liang, R.~Nagel, and T.-J. Xiao.
\newblock {\em Nonautonomous heat equations with generalized {W}entzell
  boundary conditions}, pages 321--331.
\newblock Birkh{\"a}user Basel, Basel, 2004.

\bibitem{Liggett}
T.~M. Liggett.
\newblock {\em {Interacting particle systems}}, volume 276 of {\em {Grundlehren
  der Mathematischen Wissenschaften [Fundamental Principles of Mathematical
  Sciences]}}.
\newblock Springer-Verlag, New York, 1985.

\bibitem{Luo_Trudinger_1991}
Y.~Luo and N.~S. Trudinger.
\newblock Linear second order elliptic equations with {V}enttsel boundary
  conditions.
\newblock {\em Proceedings of the Royal Society of Edinburgh: Section A
  Mathematics}, 118(3--4):193–207, 1991.

\bibitem{DeMasiPresuttiTsagkarogiannisVares2011}
A.~De Masi, E.~Presutti, D.~Tsagkarogiannis, and M.~E. Vares.
\newblock Current reservoirs in the simple exclusion process.
\newblock {\em Journal of Statistical Physics}, 144(6):1151--1170, 2011.

\bibitem{Mourragui_mixed}
M.~Mourragui, E.~Saada, and S.~Velasco.
\newblock {Hydrodynamic and hydrostatic limit for a generalized contact process
  with mixed boundary conditions}.
\newblock {\em Electronic Journal of Probability}, 28(none):1 -- 44, 2023.

\bibitem{Titchmarsh}
E.~C. Titchmarsh.
\newblock {\em Eigenfunction Expansions Associated with Second-Order
  Differential Equations}, volume~I.
\newblock Oxford University Press, 2nd edition, 1962.

\bibitem{Vasquez}
J.~L. Vázquez and E.~Vitillaro.
\newblock Heat equation with dynamical boundary conditions of
  reactive–diffusive type.
\newblock {\em Journal of Differential Equations}, 250(4):2143--2161, 2011.

\bibitem{Warma2005Parabolic}
M.~Warma.
\newblock Regularity and well-posedness for parabolic equations with general
  {W}entzell boundary conditions.
\newblock {\em Journal of Evolution Equations}, 5(1):167--185, 2005.

\bibitem{Warma2005Trace}
M.~Warma.
\newblock Wentzell boundary conditions in $\mathbb{R}^n$ and trace theorems.
\newblock {\em Advances in Differential Equations}, 10(10):1189--1232, 2005.

\bibitem{Wentzell1956Semigroups}
A.~D. Wentzell.
\newblock Semigroups of operators corresponding to a generalized differential
  operator of second order.
\newblock {\em Doklady Akademii Nauk SSSR}, 111:269--272, 1956.
\newblock In Russian.

\bibitem{Venttsel1959}
A.~D. Wentzell.
\newblock On boundary conditions for multidimensional diffusion processes.
\newblock {\em Teor. Veroyatnost. i Primenen.}, 4(2):172--185, 1959.

\end{thebibliography}
\bibliographystyle{plain}

\end{document}